\providecommand{\U}[1]{\protect\rule{.1in}{.1in}}
\newtheorem{theorem}{Theorem}
\newenvironment{assumption}[1][Assumption]{\noindent\textbf{#1.} }{\ }
\newtheorem{corollary}[theorem]{Corollary}
\newtheorem{example}[theorem]{Example}
\newtheorem{lemma}[theorem]{Lemma}
\newtheorem{proposition}[theorem]{Proposition}
\newtheorem{remark}[theorem]{Remark}
\newenvironment{proof}[1][Proof]{\noindent\textbf{#1.} }{\ \rule{0.5em}{0.5em}}
\begin{document}

\title{Regularity and Stability for the Semigroup of Jump Diffusions with
State-Dependent Intensity }
\author{Vlad Bally\thanks{Universit\'{e} Paris-Est, LAMA (UMR 8050), UPEMLV, UPEC,
CNRS, F-77454, Marne-la-Vall\'{e}e, France, email: vlad.bally@u-pem.fr}
\and Dan Goreac\thanks{Universit\'{e} Paris-Est, LAMA (UMR 8050), UPEMLV, UPEC,
CNRS, F-77454, Marne-la-Vall\'{e}e, France, \textbf{Corresponding author},
email: dan.goreac@u-pem.fr}
\and Victor Rabiet\thanks{Universit\'{e} Paris-Est, LAMA (UMR 8050), UPEMLV, UPEC,
CNRS, F-77454, Marne-la-Vall\'{e}e, France}}
\maketitle

\begin{abstract}
We consider stochastic differential systems driven by a Brownian motion and a
Poisson point measure where the intensity measure of jumps depends on the
solution. This behavior is natural for several physical models (such as
Boltzmann equation, piecewise deterministic Markov processes, etc). First, we
give sufficient conditions guaranteeing that the semigroup associated with
such an equation preserves regularity by mapping the space of the of $k$-times
differentiable bounded functions into itself. Furthermore, we give an explicit
estimate of the operator norm. This is the key-ingredient in a quantitative
Trotter-Kato-type stability result: it allows us to give an explicit estimate
of the distance between two semigroups associated with different sets of
coefficients in terms of the difference between the corresponding
infinitesimal operators. As an application, we present a method allowing to
replace "small jumps" by a Brownian motion or by a drift component. The
example of the 2D Boltzmann equation is also treated in all detail.

\textbf{Keywords: }piecewise diffusive jumps processes, trajectory-dependent
jump intensity, PDMP, regularity of semigroups of operators, weak error,
Boltzmann equation

\end{abstract}

\section{Introduction}

\nocite{Kurtz77}\nocite{EthierKurtz1986}\nocite{Walsh1986}%
\nocite{BallyFournier2011}\nocite{JacodShiryaev03}We propose a quantitative
analysis of the regularity of semigroups of operators associated with hybrid
piecewise-diffusive systems
\begin{equation}
\left.
\begin{array}
[c]{l}%
X_{t}=x+\sum_{l=1}^{\infty}\int_{0}^{t}\sigma_{l}(s,X_{s})dB_{s}^{l}+\int
_{0}^{t}b(s,X_{s})ds\\
\text{ \ \ \ \ \ }+\underset{\left[  0,t\right]  \times E\times%
\mathbb{R}
_{+}}{\int}c(s,z,X_{s-})1_{\{u\leq\gamma(s,z,X_{s-})\}}N_{\mu}(ds,dz,du).
\end{array}
\right.  \label{I1}%
\end{equation}
taking their values in some Euclidian space $%
\mathbb{R}
^{d}$. Here,

\begin{itemize}
\item $\left(  E,\mathcal{E}\right)  $ is a measurable space,

\item $N_{\mu}(ds,dz,du)$ is a homogenous Poisson point measure on
$E\times(0,\infty)$ with intensity measure $\mu(dz)\times1_{(0,\infty)}(u)du,$

\item $W_{t}=(W_{t}^{l})_{l\in\mathbb{N}}$ is an infinite-dimensional Brownian
motion (independent of $N_{\mu})$\ and

\item the coefficients $\sigma_{l},b:\mathbb{R}_{+}\times\mathbb{R}%
^{d}\rightarrow\mathbb{R}^{d}$ and $c:\mathbb{R}_{+}\times E\times
\mathbb{R}^{d}\rightarrow\mathbb{R}^{d},$ $\gamma:\mathbb{R}_{+}\times
E\times\mathbb{R}^{d}\rightarrow\lbrack0,\infty)$ are assumed to be smooth enough.
\end{itemize}

Whenever the jump intensity $\gamma$ is constant, one deals with classical
stochastic differential systems with jumps. Regularity of the associated flow
is then immediate (see \cite{IkedaWatanabe1989} or \cite{Kunita2004}).
However, if $\gamma$ is non-constant, the position $X_{s-}$ of the solution
plays an important part in the intensity of jumps. This latter framework
occurs in a wide variety \ of applications and it will receive our attention
throughout the paper.

Our first result (see Theorem \ref{M}), which is the core of the paper,
consists in proving that, under natural assumptions, the semigroup
$\mathcal{P}_{t}f(x)=\mathbb{E}(f(X_{t}(x))$ propagates regularity in finite
time $T>0$, i.e.%
\begin{equation}
\sup_{t\leq T}\left\Vert \mathcal{P}_{t}f\right\Vert _{k,\infty}\leq
Q_{k}(T,\mathcal{P})\left\Vert f\right\Vert _{k,\infty},\quad\forall f\in
C_{b}^{k}(R^{d}). \label{I1a}%
\end{equation}
Here, $\left\Vert f\right\Vert _{k,\infty}$ is the infinite norm of $f$ and
its first $k$ derivatives. \ In the case $k=0,$ this means that $\mathcal{P}%
_{t}$ is a Feller semigroup.

As we have already hinted, the main difficulty to overcome is due to the
presence of the jump intensity $\gamma(s,z,X_{s-})$. In classical jump
equation, the indicator function $1_{\{u\leq\gamma(s,z,X_{s-})\}}$ does not
appear and one may construct a version of the solution such that $x\rightarrow
X_{t}(x)$ is $k$-times differentiable (see \cite{Kunita2004}). Next one
proceeds with differentiating the associated semigroup and using chain rule
$\partial_{x_{i}}\mathcal{P}_{t}f(x)=\sum_{j=1}^{d}\mathbb{E}\left[
\partial_{j}f(X_{t}(x))\right]  \partial_{x_{i}}X_{t}^{j}(x)$ and concludes
that (\ref{I1a}) holds for $k=1$ with $Q_{1}(T,P)=\underset{t\leq T}{\sup
}\underset{x}{\text{ }\sup}\mathbb{E}\left[  \left\vert \nabla X_{t}%
(x)\right\vert \right]  .$ However, in our framework, the indicator function
is present such that the stochastic differential representation of the
solution in (\ref{I1}) is not appropriate. We will employ the alternative
representation in (\ref{D1}) (known in the engineering literature as "real
shock" representation, whileas (\ref{I1}) is known as the "fictive shock"
representation). The specificity of our framework is that the law of the jumps
depends on the trajectory and this dependence is quantified by $\gamma.$ As
consequence, the constants $Q_{k}$ will depend on some quantities of type
$\int_{E}\left\vert \partial^{\alpha}\ln\gamma\left(  t,z,x\right)
\right\vert ^{p}\gamma\left(  t,z,x\right)  \mu\left(  dz\right)  $ (for
appropriate $p\leq k$ and index $\alpha$; the presence of such terms is
inspired by Malliavin calculus techniques).

A second result is a stability propety in the line of the Trotter - Kato
theorem (cf. \cite[Theorem 4.4]{Pazy1983}). We consider a sequence $\left(
\mathcal{P}_{t}^{n}\right)  _{n\in\mathbb{N}}$ of semigroups of operators with
generators $\mathcal{L}^{n}$ and we assume that, for some $q\in\mathbb{N},$
\begin{equation}
\left\Vert (\mathcal{L}^{n}-\mathcal{L})f\right\Vert _{\infty}\leq
\varepsilon\times\left\Vert f\right\Vert _{q,\infty},\quad\text{for all }f\in
C_{b}^{q}(\mathbb{R}^{d}). \label{I1b}%
\end{equation}
Here, $\mathcal{L}$ stands for the infinitesimal operator associated with
(\ref{I1}). In Theorem \ref{Conv} we prove that, under suitable hypotheses,
the previous inequality yields%
\begin{equation}
\left\Vert (\mathcal{P}_{t}^{n}-\mathcal{P}_{t})f\right\Vert _{\infty}%
\leq\varepsilon\times Q_{q}(T,\mathcal{P)\times}\left\Vert f\right\Vert
_{q,\infty},\quad\text{for all }f\in C_{b}^{q}(\mathbb{R}^{d}). \label{I1c}%
\end{equation}
In order to undersand the link betwen this result and the property
(\ref{I1a}), one writes%
\[
\mathcal{P}_{t}f(x)-\mathcal{P}_{t}^{n}f(x)=\int_{0}^{t}\partial
_{s}\mathcal{P}_{t-s}^{n}\mathcal{P}_{s}f(x)ds=\int_{0}^{t}\mathcal{P}%
_{t-s}^{n}(\mathcal{L}^{n}-\mathcal{L})\mathcal{P}_{s}f(x)ds
\]
and notice that by (\ref{I1b}) first and by (\ref{I1a}) next%
\[
\left\Vert \mathcal{P}_{t-s}^{n}(\mathcal{L}^{n}-\mathcal{L})\mathcal{P}%
_{s}f\right\Vert _{\infty}\leq\left\Vert (\mathcal{L}^{n}-\mathcal{L}%
)\mathcal{P}_{s}f\right\Vert _{\infty}\leq\varepsilon\times\left\Vert
\mathcal{P}_{s}f\right\Vert _{q,\infty}\leq\varepsilon\times Q_{q}%
(T,\mathcal{P)}\left\Vert f\right\Vert _{q,\infty}.
\]
We finally mention that in the paper we deal with non-homogenous semigroups
and the inequalities are written with weighted norms (for simplicity we have
chosen to present the results with usual inifinity norms in this introduction).

If $\mu$ is a finite measure and $\sigma$ is null, the solution of the above
equation (\ref{I1}) relates to the class of Piecewise Deterministic Markov
Process (abridged PDMP). These equations have been introduced in
\cite{Davis_84} and studied in detail in \cite{davis_93}. A wide literature is
available on the subject of PDMP as they present an increasing amount of
applications: on/off systems (cf. \cite{Boxma_Kaspi_Kella_Perry_2005}),
reliability (e.g. \cite{DufourDutuitGonzalesZhang2008}), simulations and
approximations of reaction networks (e.g. \cite{Gillespie1976},
\cite{AlfonsiCancesTurinici2005}, \cite{CruduDebussheRadulescu2012}, with some
error bounds hinted at in \cite{JahnkeKreim2012} or
\cite{GangulyAltinanKoeppl}), neuron models (e.g. \cite{BuckwarRiedler},
\cite{BressloffNewby2013}), etc. The reader may equally take a look at the
recent book \cite{Cocozza20+} for an overview of some applications. In
engineering community, these equations are also known as "transport equations"
(see \cite{LapeyrePardouxSentis1998} or \cite{Kolokoltsov2011}).

To the best of our knowledge, in the general case (including a diffusion
component and an infinite number of jumps), under suitable assumptions, the
first proof of existence and uniqueness of the solution of equation (\ref{I1})
is given in \cite{Graham1992}.

Now assume that, for one purpose or another, one aims at applying some kind of
numerical algorithm in order to simulate the solution of equation (\ref{I1}).
Furthermore, assume for the moment, that $\sigma=b=0$ such that%
\begin{equation}
X_{t}=x+\int_{0}^{t}\int_{E}\int_{(0,\infty)}c(s,z,X_{s-})1_{\{u\leq
\gamma(s,z,X_{s-})\}}N_{\mu}(ds,dz,du).
\end{equation}
If $\mu(E)$ is finite, then one deals with a finite number of jumps in any
interval of time, such that the solution $X$ is given with respect to a
compound Poisson process that can be explicitly simulated (leading, in
particular chemistry-inspired settings, to what is commonly known as
Gillespie's algorithm \cite{Gillespie1976}; for other general aspects on
simulation, see also \cite{LapeyrePardouxSentis1998}). However, even in this
rather smooth case, the presence of a trajectory-triggered jump (i.e.
dependence on $x$ in the jump intensity $\gamma$) can lead, in certain regions
(as $\gamma$ gets large) to the accumulation of many (possibly) small jumps.
In this case, the algorithm becomes very slow. One way of dealing with the
problem is to replace these small jumps with an averaged motion leading
(piecewise) to an ordinary differential equation (e.g. in
\cite{AlfonsiCancesTuriniciDiVentura2004Rep}). Within the context of reaction
networks, some intuitions on the partition of reactions and species to get the
hybrid behavior as well as qualitative behavior (convergence to PDMP) are
specified, for example, in \cite{CruduDebussheRadulescu2012}. Further
heuristics can be found in \cite{AngiusBalboBeccuti2015}.

In the general framework of infinite $\mu(E),$ this direct approach may fail
to provide fast solutions (except particular situations e.g. in
\cite{ProtterTalay1997}). To provide an answer, the natural idea is to
truncate the "small jumps" on some compatible family of sets $\left(
E_{n}\right)  _{n\in\mathbb{N}}$ and simulating $X_{t}^{n}$ solution of
\begin{equation}
X_{t}^{n}=x+\int_{0}^{t}\int_{E_{n}^{c}}\int_{(0,\infty)}c_{n}(s,z,X_{s-}%
^{n})1_{\{u\leq\gamma(s,z,X_{s-}^{n})\}}N_{\mu}(ds,dz,du). \label{I1d}%
\end{equation}
This procedure leads to a large error. To improve it, one might want to
further replace the "small jumps" from $E_{n}$ by a Brownian diffusion term
leading to%
\begin{equation}
\left.
\begin{array}
[c]{l}%
X_{t}^{n}=x+\int_{0}^{t}\int_{E_{n}}\sigma_{n}(s,z,X_{s-}^{n})W_{\mu
}(ds,dz)+\int_{0}^{t}b_{n}(s,X_{s}^{n})ds\\
\text{ \ \ \ \ }+\int_{0}^{t}\int_{E_{n}^{c}}\int_{(0,\infty)}c_{n}%
(s,z,X_{s-}^{n})1_{\{u\leq\gamma_{n}(s,z,X_{s-}^{n})\}}N_{\mu}(ds,dz,du)
\end{array}
\right.  \label{I1e}%
\end{equation}
where $W_{\mu}$ is a time-space Gaussian random measure (associated with
$\mathbb{L}^{2}\left(  \mu\right)  $; standard procedure allows interpreting
$W_{\mu}$ as in equation (\ref{I1})). The specific form of $\sigma_{n}$ and
$b_{n}$ is obtained by using a second-order Taylor development in the
infinitesimal operator of the initial equation.

This idea goes back to \cite{AsmussenRosinski2001}. In the case of systems
driven by a L\'{e}vy process (with $\gamma$ fixed), \cite{Fournier2012} gives
a precise estimate of the error and compares the approximation obtained by
truncation as in equation (\ref{I1d}) with the one obtained by adding a
Gaussian noise as in equation (\ref{I1e}). An enlightening discussion on the
complexity of the two methods is also provided. Similar results concerning
Kac's equation are obtained in \cite{FournierGodinho2012} and for a
Boltzmann-type equation in \cite{GODINHO2013}. For some recent developpement
on asymptotics of Boltzmann-type equation, we also mention \cite{He2014}.
Finally, it is worth mentioning that the converse approach (replacing Brownian
with jump diffusions) may also be useful. The engineering literature is quite
abundant in overviews of numerical methods for (continuous) diffusion
processes using jump-type schemes. In this case the stochastic integral with
respect to the Brownian motion is replaced by an integral with respect to a
jump process.

The aim of the present paper is to provide quantitative estimates of the weak
approximation error when substituting the original system (\ref{I1}) with
hybrid (piecewise diffusive Markov system (\ref{I1e})) in the general case
when $\gamma$ is trajectory-dependent (which constitutes the main difficulty
to overcome). At intuitive level, Trotter-Kato-type results (cf. \cite[Theorem
4.4]{Pazy1983}) give the qualitative behavior. If $\mathcal{P}^{n}$ (resp.
$\mathcal{L}^{n}$) is the semigroup (resp. infinitesimal generator) associated
with (\ref{I1e}) and $\mathcal{P}$ (resp. $\mathcal{L}$) is the semigroup
(resp. infinitesimal generator) associated with (\ref{I1}), under Feller-type
conditions, convergence of $\mathcal{L}^{n}$ to $\mathcal{L}$ will imply the
corresponding convergence of semigroups. This type of qualitative behavior can
be found, for instance, in \cite{Kurtz71} (leading to drift),
\cite{BallKurtzPopovic2006} (leading to piecewise diffusive processes). In
order to get error bounds (leading to a quantitative estimate), one employes
(\ref{I1c}).

A somewhat different motivation for our work comes from a method introduced in
\cite{BenaimHirsch1996} (see also \cite{BenaimBouguetCloez2016}) to study
convergence to equilibrium for Markov chains. Roughly speaking, instead of
looking into the long-time behavior of the Markov chain $Y_{n},n\in
\mathbb{N},$ one replaces this chain by a Markov process $X_{t}$ sharing the
same asymptotics ($t\mapsto X_{t}$ being an "asymptotic pseudotrajectory"). In
\cite{Locherbach2017}, the results of our paper are used in order to extend
this method (of \cite{BenaimHirsch1996}) to the piecewise deterministic Markov
framework$.$

Finally, although many biological intuitions exist on the use of hybrid models
for reaction systems (e.g. \cite{AngiusBalboBeccuti2015}), the quantitative
estimates in our paper may turn out to provide a (purely mathematical)
selection criterion for the components to be averaged and the contributions to
be kept within the jump component. The use of diffusions punctuated by jumps
(as mesoscopic approach) responds, on one hand, to the question of speeding up
algorithms and, on the other, of keeping a high degree of stochasticity
(needed, for example, to exhibit multistable regimes).

This paper is organized as follows. We begin with presenting the main
notations used throughout the paper. We proceed, in Section \ref{SectPrelim}
with the main elements leading to the processes involved. First, we recall
some classical results on cylindrical diffusion-driven processes and the
regularity of the induced flow (Section \ref{SectContDiff}). Next, in Section
\ref{SectJumpMech}, we introduce the jumping mechanism as well as the standing
assumptions. We proceed with the construction of hybrid systems (piecewise
diffusive with trajectory-triggered jumps) in Section \ref{SectHybridSyst}. We
begin with some localization estimates when the underlying measure is finite
in Lemma \ref{Distance}. We also recall some elements on fictive and real
shocks leading to some kind of Marked-point process representation of our
system. These elements turn out to be of particular importance in providing
the differentiability of the flow generated by our hybrid system. The norm
notations and the $\mathbb{L}^{p}$-regularity of the solution (uniformly with
respect to the initial data) are given in Section \ref{SectDifferentiability}.

The differentiability of the associated semigroup is studied in Section
\ref{SectionDiffSemigroup} (with the main result being Theorem
\ref{ThDiffSemigroup1} whose uniform estimates extend to general underlying
measures in Theorem \ref{M}).

Section \ref{SectionDistanceSemigroups} gives quantitative results on the
distance between semigroups associated with such systems. The natural
assumptions are presented in the first subsection. The main result Theorem
\ref{Conv} produces quantitative upper-bounds for the distance between
semigroups starting from the distance between infinitesimal operators.

We present two classes of applications. In Section \ref{Section3Regimes}, we
imagine a piecewise deterministic Markov process presenting three regimes and
leading to a hybrid approximation with explicit distance on associated
semigroups. First, we provide a theoretical framework describing the model,
the regimes, the assumptions and the main qualitative behavior (in Theorem
\ref{ThExp}). Next, explicit measures make the object of a simple example to
which our result is applied.

The second class of examples is given by a two-dimensional Boltzmann equation
(following the approach in \cite{BallyFournier2011}) in Section
\ref{SectionBoltzmann}. We begin with describing the model, its probabilistic
interpretation and the (cut off) approximation given in
\cite{BallyFournier2011} and leading to a pure-jump PDMP. In this approximated
model, using our results, we replace small jumps with either a drift term
(first-order approximation provided in Theorem \ref{ThBoltzmannOrder1}) or a
diffusion term (second-order approximation provided in Theorem
\ref{ThBoltzmann2}).

\section{Notations}

Let $(E,\mathcal{E})$ be a measurable space and $\mu$ be a (fixed) non
negative $\sigma$-finite measure on $(E,\mathcal{E}).$

\begin{itemize}
\item Given a standard Euclidian state-space $\mathbb{R}^{m},$ the spaces
$\mathbb{L}^{p}(\mu)$ (for $1\leq p\leq\infty$) will denote the usual space of
$p-$power integrable, $\mathbb{R}^{m}$-valued functions defined on $E$. This
space is endowed with the usual norm%
\[
\left\Vert \phi\right\Vert _{\mathbb{L}^{p}(\mu)}=\left(  \int_{E}\left\vert
\phi(z)\right\vert ^{p}\mu(dz)\right)  ^{\frac{1}{p}},
\]
for all measurable function $\phi:E\rightarrow\mathbb{R}^{m}.$ For notation
purposes and by abuse of notation, the dependence on $m$ is dropped (one
should write $\mathbb{L}^{2}(\mu;\mathbb{R}^{m})$). The norm $\left\vert
\cdot\right\vert $ denotes the classical, Euclidian norm on $\mathbb{R}^{m}.$

\item The space $C_{b}^{q}(\mathbb{R}^{m})$ is the space of real-valued
functions on $\mathbb{R}^{m}$ whose partial derivatives up to order $q$ are
bounded and continuous.
\end{itemize}

Given a (fixed) probability space $\left(  \Omega,\mathcal{F},\mathbb{P}%
\right)  $ and a (fixed) time horizon $T>0$,

\begin{itemize}
\item If $\xi$ is an $\mathbb{R}^{m}$-valued random variable on $\Omega,$ we
denote, as usual, $\left\Vert \xi\right\Vert _{p}=\left(  \mathbb{E}\left[
\left\vert \xi\right\vert ^{p}\right]  \right)  ^{\frac{1}{p}}.$

\item If $Y$ is an adapted real-valued process and $Z$ is an $\mathbb{L}%
^{2}(\mu)$-valued process, then we denote by%
\[
\left\Vert Y\right\Vert _{T,p}=\left(  \mathbb{E}\left[  \sup_{t\leq
T}\left\vert Y_{t}\right\vert ^{p}\right]  \right)  ^{\frac{1}{p}}\quad
and\quad\left\Vert Z\right\Vert _{T,p}=\left(  \mathbb{E}\left[  \sup_{t\leq
T}\left\Vert Z_{t}\right\Vert _{\mathbb{L}^{2}(\mu)}^{p}\right]  \right)
^{\frac{1}{p}}.
\]

\item We use $\mathcal{M}_{T}$ to denote the space of the measurable functions
$f:[0,T]\times E\times\mathbb{R}^{d}\rightarrow\mathbb{R}$ (where metric space
are endowed with usual Borel fields)$.$ For $f\in\mathcal{M}_{T},$ we consider
the norm%
\begin{equation}
\left\Vert f\right\Vert _{(\mu,\infty)}=\sup_{t\leq T}\sup_{x\in R^{d}%
}\left\Vert f(t,\cdot,x)\right\Vert _{\mathbb{L}^{2}(\mu)}. \label{G1}%
\end{equation}

\item Similar norm can be induced on $\mathcal{M}_{T}^{d}$ by replacing
$\mathbb{L}^{2}(\mu;\mathbb{R})$ with $\mathbb{L}^{2}(\mu;\mathbb{R}^{d})$ norms.

\item For a multi-index $\alpha=(\alpha_{1},...,\alpha_{q})\in\{1,...,d\}^{q}$
we denote $\left\vert \alpha\right\vert =q$ the length of $\alpha$ and
$\partial_{x}^{\alpha}=\partial_{x_{\alpha_{1}}}....\partial_{x_{\alpha_{q}}}$
the corresponding derivative. To simplify notation, the variable $x$ may be
suppressed and we will use $\partial^{\alpha}$.

\item For $k\in\mathbb{N}^{\ast},$ we denote by $\mathbb{R}_{[k]}$ the family
of real-valued vectors indexed by multi-indexes of at most $k$ length i.e.
$\mathbb{R}_{[k]}=\{y_{[k]}=(y_{\beta})_{1\leq\left\vert \beta\right\vert \leq
k}:y_{\beta}\in\mathbb{R}\}$ and, for $y_{\left[  k\right]  }\in
\mathbb{R}_{[k]}$ we denote
\begin{equation}
\left\vert y_{[k]}\right\vert _{\mathbb{R}_{[k]}}=\sum_{1\leq\left\vert
\beta\right\vert \leq k}\left\vert y_{\beta}\right\vert ^{\frac{k+1}%
{\left\vert \beta\right\vert }}. \label{A6}%
\end{equation}
By convention, $\left\vert y_{[0]}\right\vert _{\mathbb{R}_{[0]}}=0.$
Similarly, $\mathbb{R}_{[k]}^{d}$ is defined for vectors whose components
belong to $\mathbb{R}^{d}$ and $\left\vert y_{\beta}\right\vert $ is then
computed with respect to the usual Euclidian norm on $\mathbb{R}^{d}$.

\item If $x\mapsto f(t,z,x)$ is a real-valued, $q$ times differentiable
function for every $(t,z)\in\lbrack0,T]\times E$ then, for every $1\leq l\leq
q$ we denote%
\begin{equation}
\left\Vert f\right\Vert _{l,q,(\mu,\infty)}=\sum_{l\leq\left\vert
\alpha\right\vert \leq q}\left\Vert \partial_{x}^{\alpha}f\right\Vert
_{(\mu,\infty)}\text{ and }\left\Vert f\right\Vert _{q,(\mu,\infty
)}=\left\Vert f\right\Vert _{(\mu,\infty)}+\left\Vert f\right\Vert
_{1,q,(\mu,\infty)} \label{G3}%
\end{equation}

\end{itemize}

\begin{remark}
We emphasize that in $\left\Vert f\right\Vert _{l,q,(\mu,\infty)},$ for
$l\geq1,$ only derivatives are involved ($\left\Vert f\right\Vert
_{(\mu,\infty)}$ itself does not appear).
\end{remark}

\begin{itemize}
\item For a measurable function $g:[0,T]\times\mathbb{R}^{d}\rightarrow
\mathbb{R}$ we denote by $\left\Vert g\right\Vert _{\infty}=\underset{\left(
t,x\right)  \in\lbrack0,T]\times\mathbb{R}^{d}}{\sup}\left\vert
g(t,x)\right\vert $ and, if $x\mapsto g(t,x)$ is $q$ times differentiable for
every $t\in\lbrack0,T],$ then
\[
\left\Vert g\right\Vert _{l,q,\infty}=\sum_{l\leq\left\vert \alpha\right\vert
\leq q}\left\Vert \partial_{x}^{\alpha}g\right\Vert _{\infty}\text{ and
}\left\Vert g\right\Vert _{q,\infty}=\left\Vert g\right\Vert _{\infty
}+\left\Vert g\right\Vert _{1,q,\infty}.
\]

\end{itemize}

\subsection{Preliminary results\label{SectPrelim}}

\subsubsection{Continuous Diffusion\label{SectContDiff}}

We assume the fixed probability space $\left(  \Omega,\mathcal{F}%
,\mathbb{P}\right)  $ to be endowed with a Gaussian noise $W_{\mu}$ based on
$\mu,$ as introduced by Walsh in \cite{Walsh1986}$.$ We recall that $W_{\mu}$
is a family of centred Gaussian random variables $W_{\mu}(t,h)$ indexed by
$\left(  t,h\right)  \in%
\mathbb{R}
_{+}\times\mathbb{L}^{2}(\mu)$ with covariances $\mathbb{E}\left[  W_{\mu
}(t,h)W_{\mu}(s,g)\right]  =(t\wedge s)\left\langle h,g\right\rangle
_{\mathbb{L}^{2}(\mu)}.$ Note that whenever $\left(  e_{l}\right)
_{l\in\mathbb{N}}\in\mathbb{L}^{2}(\mu)$ is an orthonormal basis, the family
$\left(  W_{\mu}(t,e_{l})\right)  _{l\in%
\mathbb{N}
}$ is a sequence of independent standard Brownian motions.

We briefly recall the stochastic integral with respect to $W_{\mu}$. One
considers the natural filtration $\mathcal{F}_{t}^{W}=\sigma(W_{\mu
}(s,h):s\leq t,h\in\mathbb{L}^{2}(\mu)),$ for all $t\geq0.$ For a process
$\phi:\mathbb{R}_{+}\times\Omega\rightarrow\mathbb{L}^{2}(\mu)$ which is
adapted (i.e $\left\langle \phi_{t},h\right\rangle _{\mathbb{L}^{2}(\mu)}$ is
$\mathcal{F}_{t}^{W}$ measurable for every $h\in\mathbb{L}^{2}(\mu)$) and for
which $\mathbb{E}\left[  \int_{0}^{T}\left\Vert \phi_{t}\right\Vert
_{L^{2}(\mu)}^{2}dt\right]  <\infty,$ for every $T>0$, one defines
\begin{equation}
\int_{0}^{t}\int_{E}\phi_{s}(z)W_{\mu}(ds,dz):=\sum_{l=1}^{\infty}\int_{0}%
^{t}\left\langle \phi_{s},e_{l}\right\rangle _{\mathbb{L}^{2}(\mu)}W_{\mu
}(ds,e_{l}). \label{G2}%
\end{equation}

Let $0\leq s\leq T$ be fixed. A non-homogeneous continuous diffusion process
$\Phi_{s,t}(x),s\leq t\leq T$ driven by $W_{\mu}$ with (regular) coefficients
$\sigma$ and $b$ is the solution of the stochastic equation%
\begin{equation}
\left.
\begin{array}
[c]{c}%
\Phi_{s,t}(x)=x+\int_{s}^{t}\int_{E}\sigma\left(  u,z,\Phi_{s,u}(x)\right)
W_{\mu}(du,dz)+\int_{s}^{t}b\left(  u,\Phi_{s,u}(x)\right)  du\\
=x+\sum_{l=1}^{\infty}\int_{0}^{t}\sigma_{l}\left(  u,\Phi_{s,u}(x)\right)
dB_{u}^{l}+\int_{s}^{t}b\left(  u,\Phi_{s,u}(x)\right)  du,
\end{array}
\right.  \label{G5}%
\end{equation}
with $B_{s}^{l}=W_{\mu}(s,e_{l})$ and $\sigma_{l}(u,x)=\left\langle
\sigma(u,\cdot,x),e_{l}\right\rangle _{\mathbb{L}^{2}(\mu)}.$

The following result is standard for finite-dimensional Brownian motions (e.g.
\cite{IkedaWatanabe1989}, \cite{Kunita2004}) and its generalization to this
setting is quite forward.

\begin{proposition}
Let us assume the following norm condition to hold true
\begin{equation}
\left\Vert \nabla\sigma\right\Vert _{(\mu,\infty)}+\left\Vert \nabla
b\right\Vert _{\infty}<\infty. \label{G5'}%
\end{equation}
Then, for every initial datum $x\in%
\mathbb{R}
^{d},$ the equation (\ref{G5}) has a unique strong solution. Moreover, if
$\left\Vert \sigma\right\Vert _{1,q+1,(\mu,\infty)}+\left\Vert b\right\Vert
_{1,q,\infty}<\infty,$ then there exists a version of this solution such that
$x\mapsto X_{s,t}(x)$ is $q$ times differentiable.
\end{proposition}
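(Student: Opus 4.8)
The plan is to establish existence and uniqueness by the classical Picard iteration / fixed-point argument in the space of adapted processes with norm $\left\Vert \cdot\right\Vert_{T,p}$, and then to obtain differentiability of the flow by differentiating the equation formally, showing that the formal derivative processes solve linear SDEs driven by $W_\mu$, and verifying convergence of the corresponding difference quotients. First I would reduce the infinite-dimensional noise $W_\mu$ to the sequence of independent Brownian motions $B^l_t = W_\mu(t,e_l)$ via the identity in (\ref{G2}), so that (\ref{G5}) becomes a stochastic equation driven by a (cylindrical) Brownian motion with coefficients $\sigma_l(u,x) = \left\langle \sigma(u,\cdot,x),e_l\right\rangle_{\mathbb{L}^2(\mu)}$; the key structural point is that $\sum_l |\sigma_l(u,x)|^2 = \left\Vert \sigma(u,\cdot,x)\right\Vert_{\mathbb{L}^2(\mu)}^2$ and, by (\ref{G5'}), $\sum_l |\nabla\sigma_l(u,x)|^2 \le \left\Vert \nabla\sigma\right\Vert_{(\mu,\infty)}^2 < \infty$, so the Lipschitz constant of the map $x \mapsto \sigma(\cdot,\cdot,x) \in \mathbb{L}^2(\mu)$ is finite and the Burkholder--Davis--Gundy inequality applies to the series in (\ref{G2}) exactly as in the finite-dimensional case.

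For existence and uniqueness I would set up the Picard scheme $\Phi^{(0)}_{s,t}(x) = x$, $\Phi^{(n+1)}_{s,t}(x) = x + \int_s^t \int_E \sigma(u,z,\Phi^{(n)}_{s,u}(x)) W_\mu(du,dz) + \int_s^t b(u,\Phi^{(n)}_{s,u}(x)) du$, and show it is a contraction on $[s,s+\delta]$ for $\delta$ small (uniformly in $s$) using BDG on the stochastic term and Cauchy--Schwarz on the drift term, then patch intervals to cover $[s,T]$. Uniqueness follows from the same estimate plus Gronwall. Linear growth of $\sigma$ and $b$, needed for the a priori $\mathbb{L}^p$ bound on the iterates, is implied by (\ref{G5'}) together with boundedness of $\sigma(\cdot,\cdot,0)$ and $b(\cdot,0)$ — this is a routine point but it is worth noting that the precise reading of the hypothesis is that the gradients are bounded, so the iterates have finite $\left\Vert \cdot\right\Vert_{T,p}$ norm for every $p$.

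For the differentiability statement I would argue by induction on $q$. For $q = 1$: under $\left\Vert \sigma\right\Vert_{1,2,(\mu,\infty)} + \left\Vert b\right\Vert_{1,1,\infty} < \infty$ the coefficients are $C^1$ in $x$ with bounded derivatives, so the candidate derivative $Y_{s,t} = \partial_x \Phi_{s,t}(x)$ should satisfy the linear equation $Y_{s,t} = I + \int_s^t \int_E \partial_x\sigma(u,z,\Phi_{s,u}(x)) Y_{s,u}\, W_\mu(du,dz) + \int_s^t \partial_x b(u,\Phi_{s,u}(x)) Y_{s,u}\, du$, which has a unique solution by the linear version of the fixed-point argument just established; one then shows that $\varepsilon^{-1}(\Phi_{s,t}(x+\varepsilon e_i) - \Phi_{s,t}(x))$ converges in $\mathbb{L}^p$ to $Y_{s,t} e_i$ by subtracting the two equations, adding and subtracting $\partial_x\sigma(u,z,\Phi_{s,u}(x))$ times the difference quotient, and invoking dominated convergence together with continuity of $\partial_x\sigma, \partial_x b$ and the $\mathbb{L}^p$-continuity of $x \mapsto \Phi_{s,u}(x)$. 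One also obtains a version jointly measurable in $(x,\omega)$, hence a bona fide differentiable flow, by the Kolmogorov continuity criterion applied to the difference quotients (here the extra derivative $\left\Vert \sigma\right\Vert_{1,q+1,(\mu,\infty)}$ rather than $\left\Vert \sigma\right\Vert_{1,q,(\mu,\infty)}$ buys the Hölder moments needed for Kolmogorov). For the inductive step, differentiating $q$ times produces by the Faà di Bruno / chain rule a linear SDE for $\partial_x^\alpha \Phi_{s,t}(x)$ with $|\alpha| = q$ whose inhomogeneous term is a polynomial expression in the lower-order derivatives (already controlled by the induction hypothesis) and in $\partial_x^\beta \sigma, \partial_x^\beta b$ for $|\beta| \le q$; the same linear existence/convergence argument closes the induction.

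The main obstacle is purely the infinite-dimensional noise bookkeeping: one must make sure that all the BDG estimates, the interchange of $\partial_x$ with the infinite sum $\sum_l \int_0^t \cdot\, dB^l_u$ defining the stochastic integral, and the passage to the limit in the difference quotients are justified uniformly in the truncation level of the series, which is exactly where the $\mathbb{L}^2(\mu)$-norm hypotheses $\left\Vert \nabla\sigma\right\Vert_{(\mu,\infty)} < \infty$ and $\left\Vert \sigma\right\Vert_{1,q+1,(\mu,\infty)} < \infty$ enter (they guarantee the relevant Hilbert--Schmidt-type norms are finite). Once this is handled, everything reduces to the classical finite-dimensional theory as in \cite{Kunita2004}, and — as the paper already remarks — the generalization is essentially routine.
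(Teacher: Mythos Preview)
Your proposal is correct and follows exactly the classical route the paper has in mind: the paper does not give a proof at all but simply declares the result ``standard for finite-dimensional Brownian motions (e.g.\ \cite{IkedaWatanabe1989}, \cite{Kunita2004}) and its generalization to this setting is quite forward,'' and your Picard-iteration-plus-induction-on-derivatives scheme, with the infinite-dimensional noise handled via the $\mathbb{L}^2(\mu)$ bounds on $\nabla\sigma$, is precisely that straightforward generalization. Your sketch is in fact considerably more detailed than what the paper supplies.
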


\begin{remark}
Let us note that the following (more usual) alternative representation for
this diffusion holds: let $a^{i,j}(t,x)=\int_{E}\sigma^{i}\sigma^{j}%
(t,z,x)\mu(dz),$ $1\leq i,j\leq d$ and set $\widehat{\sigma}=a^{\frac{1}{2}}.$
Then the law of $\Phi_{s,t}$ coincides with the law of $\widehat{\Phi}_{s,t}$
solution of
\[
\widehat{\Phi}_{s,t}(x)=x+\sum_{j=1}^{d}\int_{s}^{t}\widehat{\sigma}%
_{j}\left(  u,\widehat{\Phi}_{s,u}(x)\right)  dB_{u}^{j}+\int_{s}^{t}b\left(
u,\widehat{\Phi}_{s,u}(x)\right)  du,
\]
where $B=(B^{1},...,B^{d})$ is a standard Brownian motion. We prefer working
with the representation $\Phi_{s,t}$ (and not with $\widehat{\Phi}_{s,t}$) for
two reasons. First, the stochastic integral with respect to $W_{\mu}(du,dz)$
naturally appears in our problem. Moreover, if one liked to work with
$\widehat{\Phi}_{s,t},$ then one would have to compute $\widehat{\sigma}=$
$a^{\frac{1}{2}}$ and to derive regularity properties for $\widehat{\sigma}$
from regularity properties for $a,$ and this is more delicate (one needs some
ellipticity property for $a).$ In contrast, if one starts with the equation
(\ref{G5}), then the proof of the previous proposition is a straightforward
extension of the classical results.
\end{remark}

\subsubsection{Jump Mechanism and Further Notations\label{SectJumpMech}}

We assume the space $\Omega$ to be large enough to contain an independent
Poisson point measure on $E\times\mathbb{R}_{+}$ $\ $denoted by $N_{\mu}$ and
having a compensator $\widehat{N}_{\mu}(ds,dz,du)=ds\mu(dz)du.$ (For further
constructions and properties, the reader is referred to
\cite{IkedaWatanabe1989}). We just mention that, whenever $A_{l}\times
I_{l}\in\mathcal{E}\times\mathcal{B}(\mathbb{R}_{+}),$ $l=1,...,m$ are
disjoint sets, then $t\mapsto N_{\mu}(t,A_{l}\times I_{l})$ are independent
Poisson processes with parameters $\mu(A_{l})\times Leb(I_{l}).$ Here,
$\mathcal{B}(\mathbb{R}_{+})$ stands for the family of Borel subsets of
$\mathbb{R}_{+}$.

We consider now the coefficients $c\in\mathcal{M}_{T}^{d}$ and $\gamma
\in\mathcal{M}_{T}$ and we assume that there exist some functions
$l_{c},l_{\gamma}:E\rightarrow\mathbb{R}_{+}$ such that%
\begin{equation}
C_{\mu}(\gamma,c):=\sup_{t\leq T}\sup_{x\in R^{d}}\int_{E}(l_{\gamma
}(z)\left\vert c(t,z,x)\right\vert +l_{c}(z)\gamma(t,z,x))\mu(dz)<\infty
\label{Not7}%
\end{equation}
and such that, for every $x,y\in\mathbb{R}^{d},$ every $t\geq0$ and $z\in E,$%
\begin{equation}
\left\vert c(t,z,x)-c(t,z,y)\right\vert \leq l_{c}(z)\left\vert x-y\right\vert
,\quad\left\vert \gamma(t,z,x)-\gamma(t,z,y)\right\vert \leq l_{\gamma
}(z)\left\vert x-y\right\vert . \label{Not6}%
\end{equation}
Moreover, we assume that $\gamma$ takes non-negative values and
\begin{equation}
\Gamma:=\sup_{t\leq T}\sup_{x\in\mathbb{R}^{d}}\sup_{z\in E}\gamma
(t,z,x)<\infty. \label{Not5}%
\end{equation}
We also set, for any Borel set $G\subset E,$%
\begin{equation}
\alpha(G):=\sup_{t\leq T}\sup_{x\in R^{d}}\int_{G}\left\vert
c(t,z,x)\right\vert \gamma(t,z,x)\mu(dz) \label{Not8}%
\end{equation}
and assume that $\alpha(E)<\infty$.

\subsection{The Hybrid System\label{SectHybridSyst}}

We are interested in the (hybrid) stochastic differential equation%
\begin{equation}
\left.
\begin{array}
[c]{c}%
X_{s,t}(x)=x+\int_{s}^{t}\int_{E}\sigma(r,z,X_{s,r}(x))W_{\mu}(dr,dz)+\int
_{s}^{t}b(r,X_{s,r}(x)dr\\
+\int_{s}^{t}\int_{E\times\lbrack0,2\Gamma]}c(r,z,X_{s,r-}(x))1_{\{u\leq
\gamma(r,z,X_{s,r-}(x))\}}N_{\mu}(dr,dz,du).
\end{array}
\right.  \label{Not8a}%
\end{equation}

\begin{remark}
The stochastic components $W_{\mu}$ and $N_{\mu}$ are assumed to be associated
with the same measurable space $(E,\mathcal{E},\mu).$ This assumption is made
in order to avoid heavy notation. Alternatively, one may consider $W_{\mu}$ on
$(E,\mathcal{E},\mu)$ and $N_{\nu}$ on some (independent) space
$(F,\mathcal{F},\nu).$ For most examples, the space $E=\{1,...,d\}$ and the
uniform measure $\mu(i)=\frac{1}{d},$ for all $i\in E$ play an important role.
In this setting, $W_{\mu}(dr,dz)=\frac{1}{d}\sum_{i=1}^{d}dW_{r}^{i},$ such
that one comes back to a usual diffusion process driven by a
finite-dimensional Brownian motion.
\end{remark}

The following result gives the existence and uniqueness of the solution to our
hybrid system in the class of c\`{a}dl\`{a}g processes in $\mathbb{L}^{1}.$

\begin{theorem}
\label{EU}Suppose that (\ref{Not7}),(\ref{Not6}),(\ref{Not5}),(\ref{Not8}) and
(\ref{G5'}) hold. Then the equation (\ref{Not8a}) has a unique $\mathbb{L}%
^{1}$ solution (that is a $cadlag$ process $X_{s,t}(x),t\geq s$ with
$E(\left\vert X_{s,t}(x)\right\vert )<\infty$ which verifies (\ref{Not8a})$)$.
\end{theorem}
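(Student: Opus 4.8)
The plan is to obtain $t\mapsto X_{s,t}(x)$ as the fixed point of the Picard map
\[
(\mathcal{T}Y)_{t}=x+\int_{s}^{t}\!\!\int_{E}\sigma(r,z,Y_{r})W_{\mu}(dr,dz)+\int_{s}^{t}b(r,Y_{r})\,dr+\int_{s}^{t}\!\!\int_{E\times[0,2\Gamma]}c(r,z,Y_{r-})1_{\{u\leq\gamma(r,z,Y_{r-})\}}N_{\mu}(dr,dz,du)
\]
on the complete space of c\`{a}dl\`{a}g adapted processes $Y$ on a short interval $[s,s+h]$ with $\left\Vert Y\right\Vert _{s+h,1}<\infty$, and then to patch the pieces together over $[s,T]$. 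The reason the $\mathbb{L}^{1}$ norm (rather than $\mathbb{L}^{2}$) is the natural framework here is the jump term: integrating the indicator in $u$ and using (\ref{Not5}),(\ref{Not8}) one has $\mathbb{E}[\int_{s}^{s+h}\!\int_{E}|c(r,z,Y_{r-})|\gamma(r,z,Y_{r-})\mu(dz)\,dr]\leq h\,\alpha(E)$ uniformly in $Y$, so the jump integral is always a well-defined process of integrable variation; in particular no truncation of a possibly infinite $\mu$ is needed (alternatively one may first solve the equation with $E$ replaced by $E_{n}\uparrow E$, $\mu(E_{n})<\infty$, where the jump part has finite activity and $X^{n}$ is built by the usual interlacing on top of the continuous flow of Section \ref{SectContDiff}, and then pass to the limit using precisely this uniform bound).

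The step I expect to be the real obstacle is the Lipschitz control of the jump term, because for a fixed level $u$ the map $x\mapsto c(r,z,x)1_{\{u\leq\gamma(r,z,x)\}}$ is \emph{discontinuous}, so there is no pointwise Lipschitz bound to feed into the scheme. The remedy is to work at the level of the compensator: for $x,y\in\mathbb{R}^{d}$, assuming w.l.o.g. $\gamma(r,z,x)\geq\gamma(r,z,y)$ and splitting $[0,2\Gamma]$ at the two thresholds, (\ref{Not6}) and (\ref{Not5}) give
\[
\int_{0}^{2\Gamma}\big|c(r,z,x)1_{\{u\leq\gamma(r,z,x)\}}-c(r,z,y)1_{\{u\leq\gamma(r,z,y)\}}\big|\,du\leq\big(l_{c}(z)\,\gamma(r,z,y)+l_{\gamma}(z)\,|c(r,z,x)|\big)|x-y|,
\]
whence, integrating in $z$ and invoking (\ref{Not7}), the left-hand side integrated also over $E$ is bounded by $2\,C_{\mu}(\gamma,c)\,|x-y|$. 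Since the jump term is of integrable variation, one may pull the absolute value inside the sum over the atoms of $N_{\mu}$ and then compensate, obtaining the $\mathbb{L}^{1}$-Lipschitz bound $\mathbb{E}[\sup_{t\leq s+h}|\int_{s}^{t}\!\int_{E\times[0,2\Gamma]}(c(r,z,Y_{r-})1_{\{u\leq\gamma(r,z,Y_{r-})\}}-c(r,z,Y'_{r-})1_{\{u\leq\gamma(r,z,Y'_{r-})\}})N_{\mu}(dr,dz,du)|]\leq 2\,C_{\mu}(\gamma,c)\int_{s}^{s+h}\mathbb{E}[\sup_{v\leq r}|Y_{v}-Y'_{v}|]\,dr$. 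This is exactly the role played by the particular form of assumption (\ref{Not7}); the factors $\gamma$ inside the integrals defining $C_{\mu}(\gamma,c)$ and $\alpha(E)$ are what make the argument go through when $\mu(E)=\infty$.

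For the Brownian part, Burkholder--Davis--Gundy together with (\ref{G5'}) gives $\mathbb{E}[\sup_{t\leq s+h}|\int_{s}^{t}\!\int_{E}(\sigma(r,z,Y_{r})-\sigma(r,z,Y'_{r}))W_{\mu}(dr,dz)|]\leq C\left\Vert \nabla\sigma\right\Vert _{(\mu,\infty)}\mathbb{E}[(\int_{s}^{s+h}|Y_{r}-Y'_{r}|^{2}dr)^{1/2}]$, and estimating $(\int|\delta_{r}|^{2}dr)^{1/2}\leq(\sup_{r}|\delta_{r}|)^{1/2}(\int|\delta_{r}|dr)^{1/2}$ and applying Cauchy--Schwarz converts this into $C\left\Vert \nabla\sigma\right\Vert _{(\mu,\infty)}h^{1/2}\left\Vert Y-Y'\right\Vert _{s+h,1}$ — the one place where the $\mathbb{L}^{2}$ nature of the stochastic integral must be reconciled with the $\mathbb{L}^{1}$ framework imposed by the jumps. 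The drift term is $\left\Vert \nabla b\right\Vert _{\infty}$-Lipschitz in $\mathbb{L}^{1}$ trivially. Summing the three contributions,
\[
\left\Vert \mathcal{T}Y-\mathcal{T}Y'\right\Vert _{s+h,1}\leq\big(C\left\Vert \nabla\sigma\right\Vert _{(\mu,\infty)}h^{1/2}+(\left\Vert \nabla b\right\Vert _{\infty}+2C_{\mu}(\gamma,c))\,h\big)\left\Vert Y-Y'\right\Vert _{s+h,1},
\]
so choosing $h=h_{0}$ small enough that the bracket is $<1$ (note $h_{0}$ depends only on $\left\Vert \nabla\sigma\right\Vert _{(\mu,\infty)},\left\Vert \nabla b\right\Vert _{\infty},C_{\mu}(\gamma,c)$, not on the initial value) makes $\mathcal{T}$ a strict contraction; that it maps the relevant space into itself is checked the same way, using the linear-growth bounds implied by (\ref{Not6})--(\ref{G5'}). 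Iterating over $[s+h_{0},s+2h_{0}],\ldots$ (finitely many steps, since $h_{0}$ is uniform) with the previously obtained integrable end values as new initial data yields a unique $\mathbb{L}^{1}$ solution on $[s,T]$; running the same estimates against $Y'\equiv x$ and invoking Gr\"{o}nwall's lemma gives $\left\Vert X_{s,\cdot}(x)\right\Vert _{T,1}<\infty$, in particular $\mathbb{E}(|X_{s,t}(x)|)<\infty$, and uniqueness in the stated $\mathbb{L}^{1}$ class follows from the contraction on each subinterval after a routine localization by the exit times of large balls.
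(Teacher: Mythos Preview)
Your argument is correct, and the heart of it --- the $\mathbb{L}^{1}$-Lipschitz control of the jump term obtained by integrating the indicator in $u$ first and then invoking (\ref{Not7}) --- is exactly the mechanism the paper relies on (following \cite{Graham1992}). Your handling of the Brownian term is slightly more elaborate than needed: the paper simply bounds $\big(\int_{s}^{t}|\Delta_{r}|^{2}\,dr\big)^{1/2}\leq (t-s)^{1/2}\sup_{r}|\Delta_{r}|$ inside the BDG estimate, which lands on the same $h^{1/2}$ prefactor without the intermediate Cauchy--Schwarz step.

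The structural difference is in how the solution is produced. You run a Picard contraction directly on the full equation in $\mathbb{L}^{1}$; the paper instead (i) restricts $\mu$ to sets $E_{n}\uparrow E$ with $\mu(E_{n})<\infty$, (ii) builds $X^{E_{n}}$ explicitly by interlacing the jumps of a compound Poisson process into the continuous flow $\Phi_{s,t}$ of Section \ref{SectContDiff}, and (iii) uses the $\mathbb{L}^{1}$ stability estimate of Lemma \ref{Distance} (your contraction inequality, applied to $X^{E_{n}}$ versus $X^{E_{m}}$) to show the sequence is Cauchy and to get uniqueness --- this is the route you mention parenthetically. Your direct fixed-point argument is cleaner for bare existence/uniqueness, but the paper's detour through finite activity is not gratuitous: it simultaneously sets up the ``fictive shock'' and ``real shock'' representations (\ref{I5'}) and (\ref{D1}) that are indispensable for the differentiability results of Sections \ref{SectDifferentiability}--\ref{SectionDiffSemigroup}, where the indicator $1_{\{u\leq\gamma\}}$ must be traded for a trajectory-dependent jump law.
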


The above theorem has been first proven in \cite{Graham1992}. The main idea is
that, in contrast with the standard approach to $SDEs$ relying on
$\mathbb{L}^{2}$ norms, one has to work here with $\mathbb{L}^{1}$ norms$.$
This is due to the indicator function appearing in the Poisson noise. We
shortly recall this argument in the following.

\subsubsection{Localization Estimates for $\mu$}

For a set $G\subset E$ we denote by $X_{s,t}^{G}$ the solution of the equation
(\ref{Not8a}) in which the measure $\mu$ is restricted to $G$ i.e. substituted
by $1_{G}(z)d\mu(z).$ The first step gives the behavior of such solutions for
different sets $G.$

\begin{lemma}
\label{Distance}We suppose that (\ref{Not7}),(\ref{Not6}),(\ref{Not5}%
),(\ref{Not8}) and (\ref{G5'}) hold. Let $G_{1}\subset G_{2}\subset E$ be two
measurable sets (the case $G_{1}=G_{2}=E$\ is included) and let $\Delta
X_{s,t}=X_{s,t}^{G_{1}}-X_{s,t}^{G_{2}}.$ There exists a universal constant
$C$ such that for every $T\geq0$ one has
\begin{equation}
\mathbb{E}\left[  \sup_{s\leq t\leq T}\left\vert \Delta X_{t}\right\vert
\right]  \leq(\left\vert \Delta X_{s,s}\right\vert +T\alpha(G_{2}\backprime
G_{1}))\exp\left(  CT\left(  \left\Vert \nabla\sigma\right\Vert _{(\mu
,\infty)}+\left\Vert \nabla b\right\Vert _{\infty}+C_{\mu}(\gamma,c)\right)
^{2}+1\right)  . \label{b2}%
\end{equation}

\end{lemma}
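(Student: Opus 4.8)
The plan is to write an evolution equation for the difference $\Delta X_{s,t}=X_{s,t}^{G_1}-X_{s,t}^{G_2}$ by subtracting the two copies of \eqref{Not8a} (with $\mu$ restricted to $G_1$ and to $G_2$ respectively), then estimate $\mathbb{E}[\sup_{s\le r\le t}|\Delta X_r|]$ term by term and close the loop with Gronwall's lemma. The key point — and the reason the whole argument is set in $\mathbb{L}^1$ rather than $\mathbb{L}^2$ — is that the jump term contains the indicator $1_{\{u\le\gamma\}}$, which is not Lipschitz in $x$; so one cannot use the $\mathbb{L}^2$ isometry for the compensated Poisson integral. Instead I would bound the jump contribution directly by its total-variation (i.e.\ using the (non-compensated) integral against $N_\mu$), which is exactly where the quantities $C_\mu(\gamma,c)$ and $\alpha(\cdot)$ enter.

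First I would split the jump difference into two pieces: (i) the part living on $G_1$, where both processes jump and the difference comes only from $c(r,z,X^{G_1}_{s,r-})1_{\{u\le\gamma(r,z,X^{G_1}_{s,r-})\}}-c(r,z,X^{G_2}_{s,r-})1_{\{u\le\gamma(r,z,X^{G_2}_{s,r-})\}}$, and (ii) the part living on $G_2\setminus G_1$, where only $X^{G_2}$ jumps, contributing the pure "extra mass" term. For piece (ii), taking expectations turns $N_\mu(dr,dz,du)$ into $dr\,\mu(dz)\,du$ over $u\in[0,\gamma]$, producing $\int_s^t\int_{G_2\setminus G_1}|c(r,z,X^{G_2}_{s,r-})|\gamma(r,z,X^{G_2}_{s,r-})\mu(dz)\,dr\le (t-s)\,\alpha(G_2\setminus G_1)\le T\alpha(G_2\setminus G_1)$ by the definition \eqref{Not8}. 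For piece (i), I would use the elementary bound
\[
\bigl|c(r,z,x)1_{\{u\le\gamma(r,z,x)\}}-c(r,z,y)1_{\{u\le\gamma(r,z,y)\}}\bigr|
\le |c(r,z,x)-c(r,z,y)|\,1_{\{u\le\gamma(r,z,x)\}}
+|c(r,z,y)|\,1_{\{\gamma(r,z,x)\wedge\gamma(r,z,y)<u\le\gamma(r,z,x)\vee\gamma(r,z,y)\}},
\]
then apply \eqref{Not6}: the first term is $\le l_c(z)|x-y|$ on a $u$-set of length $\le\gamma(r,z,x)$, and the second is $\le |c(r,z,y)|$ on a $u$-set of length $\le l_\gamma(z)|x-y|$. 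After integrating in $u$ and taking expectations (again replacing $N_\mu$ by its compensator), piece (i) is bounded by $\int_s^t\mathbb{E}\bigl[|\Delta X_{s,r}|\int_{G_1}(l_c(z)\gamma(r,z,X^{G_1}_{s,r-})+l_\gamma(z)|c(r,z,X^{G_2}_{s,r-})|)\mu(dz)\bigr]dr\le C_\mu(\gamma,c)\int_s^t\mathbb{E}[\sup_{s\le v\le r}|\Delta X_{s,v}|]\,dr$, using \eqref{Not7}. The diffusion and drift differences are handled in the standard way: for the drift, Lipschitzness of $b$ (from \eqref{G5'}) gives a $\|\nabla b\|_\infty\int_s^t\mathbb{E}[\sup|\Delta X|]$ bound; for the stochastic integral against $W_\mu$, I would apply Burkholder–Davis–Gundy to get $\mathbb{E}[\sup_{s\le r\le t}|\int_s^r\int_E(\sigma(v,z,X^{G_1})-\sigma(v,z,X^{G_2}))W_\mu(dv,dz)|]\le C\,\mathbb{E}[(\int_s^t\|\sigma(v,\cdot,X^{G_1}_{s,v})-\sigma(v,\cdot,X^{G_2}_{s,v})\|_{\mathbb{L}^2(\mu)}^2dv)^{1/2}]$, then bound the integrand by $\|\nabla\sigma\|_{(\mu,\infty)}|\Delta X_{s,v}|$ and use $\sqrt{ab}\le\tfrac12(a+b)$ to absorb the square root — here one either accepts a factor that after squaring gives the $(\cdots)^2$ appearing in the exponent of \eqref{b2}, or more cleanly works with the $\mathbb{L}^1$-type estimate throughout and squares only at the Gronwall step, which explains the precise form of the constant.

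Assembling, one obtains, with $u(t):=\mathbb{E}[\sup_{s\le r\le t}|\Delta X_r|]$,
\[
u(t)\ \le\ |\Delta X_{s,s}| + T\,\alpha(G_2\setminus G_1) + C\bigl(\|\nabla\sigma\|_{(\mu,\infty)}+\|\nabla b\|_\infty+C_\mu(\gamma,c)+\text{(BDG correction)}\bigr)\int_s^t u(r)\,dr,
\]
and Gronwall's lemma yields \eqref{b2}; the extra "$+1$" and the square inside the exponent are exactly the slack produced when the BDG term $\mathbb{E}[(\int\|\nabla\sigma\|^2|\Delta X|^2)^{1/2}]$ is linearized via Young's inequality before Gronwall. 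The main obstacle is the jump term in piece (i): one must be careful that the decomposition of the indicator difference is genuinely $\mathbb{L}^1$-controlled (the "second" indicator set has $u$-length proportional to $|\Delta X|$, not to $1$), and that after taking expectation the coefficient of $\int_s^t u(r)\,dr$ is finite — this is precisely what \eqref{Not7} and \eqref{Not5} guarantee, and it is the reason the statement insists on those hypotheses. Everything else is routine SDE estimation.
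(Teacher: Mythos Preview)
Your proposal is correct and follows essentially the same approach as the paper: the same $\mathbb{L}^1$ strategy, the same splitting of the jump difference into the $G_2\setminus G_1$ ``extra mass'' piece (yielding $T\alpha(G_2\setminus G_1)$) and the $G_1$ piece controlled via the indicator decomposition and \eqref{Not7}, and BDG for the $W_\mu$-integral. The only cosmetic difference is in closing the estimate: the paper does not apply Gronwall directly but instead bounds the BDG contribution by $C\|\nabla\sigma\|_{(\mu,\infty)}(t-s)^{1/2}\mathbb{E}[\sup|\Delta X|]$, absorbs it into the left-hand side on intervals of length $\delta\sim(\|\nabla\sigma\|_{(\mu,\infty)}+\|\nabla b\|_\infty+C_\mu(\gamma,c))^{-2}$, and then iterates over $\lfloor T/\delta\rfloor+1$ subintervals --- which is exactly what produces the square and the ``$+1$'' in the exponent of \eqref{b2}, matching your heuristic explanation.
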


The proof is quite straightforward. For our readers' sake, the elements of
proof are gathered in Section \ref{ProofLocalizationLemma}.

Let us now discuss the construction of a solution of the equation
(\ref{Not8a}) and present two alternative representations of this solution.

\subsubsection{Fictive Shocks on Increasing Support Sets}

We fix $G\subset E$ with $\mu(G)<\infty$\ and we recall that $\gamma$ is
upper-bounded by $\Gamma$ (see (\ref{Not5}))$.$ We also fix $s>0$ and we will
construct $X_{s,t}^{G}$ solution of the equation (\ref{Not8a}) associated with
$1_{G}(z)\mu(dz)$ using a compound Poisson process as follows.

One takes $J_{t}$ to be a (usual) Poisson process of parameter $2\Gamma\mu(G)$
and denotes by $T_{k},k\in\mathbb{N}$ the jump times of $J_{t}.$ Moreover, one
considers two sequences of independent random variables $Z_{k}$ and
$U_{k},k\in\mathbb{N}$ ( independent of $J_{t}$ as well and supported by the
set $\Omega$ assumed to be large enough). These random variables are
distributed
\begin{equation}
\mathbb{P}(Z_{k}\in dz)=\frac{1}{\mu(G)}\mu(dz),\quad\mathbb{P}(U_{k}\in
du)=\frac{1}{2\Gamma}1_{[0,2\Gamma]}(u)du. \label{I3}%
\end{equation}
Finally, one defines the continuous stochastic flow $\Phi_{s,t}(x),0\leq s\leq
t$ to be the solution of the $SDE$ (\ref{G5}). Then the solution $X_{s,t}^{G}$
of the equation (\ref{Not8a}) associated with $1_{G}(z)\mu(dz)$ is constructed
by setting $X_{s,s}^{G}(x)=x$ and
\[
X_{s,t}^{G}(x)=\Phi_{T_{k},t}(X_{s,T_{k}}^{G}(x)),\text{ on }T_{k}\leq
t<T_{k+1}%
\]
and
\[
X_{s,T_{k+1}}^{G}(x)=X_{s,T_{k+1}-}^{G}(x)+c(T_{k+1},Z_{k+1},X_{s,T_{k+1}%
-}^{G}(x))1_{\{U_{k}\leq\gamma(T_{k+1},Z_{k+1},X_{s,T_{k+1}-}^{G}(x))\}},
\]
where $X_{s,T_{k+1}-}^{G}(x)=\Phi_{T_{k},T_{k+1}}(X_{s,T_{k}}(x))$. This gives
the solution of the equation
\begin{equation}
\left.
\begin{array}
[c]{c}%
X_{s,t}^{G}=x+\int_{s}^{t}\int_{E}\sigma(r,z,X_{s,r}^{G})W_{\mu}%
(dr,dz)+\int_{s}^{t}b(r,X_{s,r}^{G})dr\\
+\sum_{k=J_{s}+1}^{J_{t}}c(T_{k},Z_{k},X_{s,T_{k}-}^{G}(x))1_{\{U_{k}%
\leq\gamma(T_{k},Z_{k},X_{s,T_{k}-}^{G}(x))\}}%
\end{array}
\right.  \label{I5'}%
\end{equation}
that is (\ref{Not8a}) associated with $1_{G}(z)\mu(dz).$ This is the so-called
"fictive shock" representation (see \cite{LapeyrePardouxSentis1998}).

\begin{remark}
\label{Existance} To construct the global solution, one begins with
considering a sequence $E_{n}\uparrow E$ with $\mu(E_{n})<\infty.$ Then, one
constructs $X_{s,t}^{E_{n}}$ as before and then, using (\ref{b2}), checks that
this is a Cauchy sequence. Passing to the limit, one obtains $X_{s,t}$
solution of the general equation (\ref{Not8a}). Uniqueness follows directly
from (\ref{b2}).
\end{remark}

For the simplicity of the notation, in the following we will work with $s=0.$
The estimates for $s>0$ are quite similar. As usual, we will denote
$X_{0,t}^{G}(x)$ by$~X_{t}^{G}(x).$

\subsubsection{Real Shocks}

We construct now the "real shock" representation $\overline{X}_{t}^{G}$ in the
following way. We define $E_{\ast}=E\cup\{z_{\ast}\},$ where $z_{\ast}$ is a
point which does not belong to $E$ and we extend $\mu$ to $E_{\ast}$ by
setting $\mu(z_{\ast})=1.$ We also extend $c(t,z,x)$ to $E_{\ast}$ by
$c(t,z_{\ast},x)=0,$ for every $(t,x)\in\mathbb{R}_{+}\times\mathbb{R}^{d}.$
Given a sequence $\left(  z_{k}\right)  _{k\in\mathbb{N}}\subset E_{\ast},$
one denotes $z^{k}=(z_{1},...,z_{k})$ and constructs $x_{t}(x,z^{J_{t}})$ as
follows.%
\begin{equation}
\left.
\begin{array}
[c]{l}%
x_{t}(x,z^{k})=\Phi_{T_{k},t}(x_{T_{k}}(x,z^{k})),\quad\text{on }T_{k}\leq
t<T_{k+1},\\
x_{T_{k+1}}(x,z^{k+1})=x_{T_{k+1}-}(x,z^{k})+c(T_{k+1},z_{k+1},x_{T_{k+1}%
-}(x,z^{k}))1_{G}(z_{k+1}).
\end{array}
\right.  \label{I6}%
\end{equation}
Next, we define, for every $(t,z,x)\in%
\mathbb{R}
_{+}\times E_{\ast}\times%
\mathbb{R}
^{d},$
\begin{equation}
\left.
\begin{array}
[c]{l}%
q_{G}(t,z,x)=\Theta_{G}(t,x)1_{\{z_{\ast}\}}(z)+\frac{1}{2\Gamma\mu(G)}%
1_{G}(z)\gamma(t,z,x),\text{ }\\
\text{where }\Theta_{G}(t,x)=1-\frac{1}{2\Gamma\mu(G)}\int_{G}\gamma
(t,z,x)d\mu(z).
\end{array}
\right.  \label{I7}%
\end{equation}
We consider a sequence of random variables $\left(  \overline{Z}_{k}\right)
_{k\in\mathbb{N}}$\ with the laws constructed recursively by
\begin{equation}
E(\overline{Z}_{k}\in dz\mid x_{T_{k}-}(x,\overline{Z}^{k-1})=y)=q_{G}%
(T_{k},z,y)\mu(dz), \label{I8}%
\end{equation}
where $\overline{Z}^{k-1}=(\overline{Z}_{1},...,\overline{Z}_{k-1}).$ Finally,
we define $\overline{X}_{t}^{G}(x)=x_{t}(x,\overline{Z}^{J_{t}}).$ This
amounts to saying that
\begin{equation}
\overline{X}_{t}^{G}(x)=x+\int_{0}^{t}\int_{E}\sigma(s,z,\overline{X}_{s}%
^{G})dW_{\mu}(ds,dz)+\int_{0}^{t}b(s,\overline{X}_{s}^{G})ds+\sum_{k=1}%
^{J_{t}}c(T_{k},\overline{Z}_{k},\overline{X}_{T_{k}-}^{G})1_{G}(\overline
{Z}_{k}). \label{D1}%
\end{equation}
The equation (\ref{D1}) is similar to the equation (\ref{I5'}) but now
$1_{\{U_{k}\leq\gamma(T_{k},Z_{k},X_{T_{k}-}(x))\}}$ no longer appears.

\begin{remark}
If $\sigma$ and $b$ are smooth functions, one can choose a variant of
$x\mapsto\Phi_{s,t}(x)$ that is almost surely differentiable. Moreover, if
$x\mapsto c(t,z,x)$ is also smooth, then $x\mapsto x_{t}(x,z^{J_{t}})$ is
smooth as well. So $x\mapsto\overline{X}_{t}^{G}(x)$ will be also
differentiable. This is less obvious for $x\mapsto X_{t}^{G}(x)$ because of
the indicator function appearing in the equation (\ref{I5'}).
\end{remark}

Moreover, we have the following well known identity of laws result.

\begin{lemma}
\label{Law}The law of $\left(  \overline{X}_{t}^{G}(x)\right)  _{t\geq0}$
coincides with the law of $\left(  X_{t}^{G}(x)\right)  _{t\geq0}$ solution to
(\ref{I5'}). In particular, $\mathcal{P}_{t}^{G}f(x):=\mathbb{E}\left[
f(X_{t}^{G}(x)\right]  =\mathbb{E}\left[  f(\overline{X}_{t}^{G}(x)\right]  .$
\end{lemma}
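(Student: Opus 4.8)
The plan is to establish the equality of laws by a coupling/induction argument on the successive jump times $T_k$ of the common Poisson process $J_t$, exploiting the fact that both $X_t^G$ (the fictive-shock representation in (\ref{I5'})) and $\overline{X}_t^G$ (the real-shock representation in (\ref{D1})) are built from the \emph{same} deterministic flow $\Phi_{s,t}$ between consecutive jump times, and differ only in the mechanism selecting the jump locations. First I would observe that both processes are piecewise determined: on each interval $[T_k,T_{k+1})$ they evolve along $\Phi_{T_k,\cdot}$, so the whole law is encoded by the joint law of the jump times $(T_k)_k$ together with the post-jump values. Since $(T_k)_k$ is the same Poisson process in both constructions, it suffices to show that the two Markov chains $k\mapsto X_{T_k}^G(x)$ and $k\mapsto\overline{X}_{T_k}^G(x)$ (embedded at jump times, with the flow transported back) have the same transition law.

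The key computation is the following. In the fictive-shock chain, given the pre-jump value $y=X_{s,T_{k+1}-}^G(x)$, one draws $Z_{k+1}\sim\frac{1}{\mu(G)}\mu(dz)$ and $U_{k}\sim\frac{1}{2\Gamma}1_{[0,2\Gamma]}(u)\,du$ independently, and updates $y\mapsto y+c(T_{k+1},Z_{k+1},y)1_{\{U_k\le\gamma(T_{k+1},Z_{k+1},y)\}}$. Integrating out $U_k$ first, conditionally on $Z_{k+1}=z$ the jump $c(T_{k+1},z,y)$ is added with probability $\frac{\gamma(T_{k+1},z,y)}{2\Gamma}$ (using $\gamma\le\Gamma<2\Gamma$, so this is a genuine probability) and no jump occurs with the complementary probability. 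Hence the effective post-jump value has the law: "add $c(T_{k+1},z,y)$" with weight $\frac{1}{\mu(G)}\cdot\frac{\gamma(T_{k+1},z,y)}{2\Gamma}\mu(dz)$ over $z\in G$, and "stay at $y$" with weight $1-\frac{1}{2\Gamma\mu(G)}\int_G\gamma(T_{k+1},z,y)\,d\mu(z)=\Theta_G(T_{k+1},y)$. This is \emph{exactly} the transition described by $q_G(T_{k+1},z,y)\mu(dz)$ in (\ref{I7})–(\ref{I8}): a jump to $z\in G$ contributing $c$, or a selection of the cemetery point $z_\ast$ with $c(\cdot,z_\ast,\cdot)=0$ (i.e. no move). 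So the one-step transition laws of the two embedded chains coincide.

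From here I would conclude by induction on $k$: assuming $(T_1,\dots,T_k,X_{T_1}^G,\dots,X_{T_k}^G)$ has the same law as the corresponding tuple for $\overline{X}$, the displayed computation shows the conditional law of the next pre-jump value (obtained by applying the same flow $\Phi_{T_k,T_{k+1}}$, independent of everything given $T_k,T_{k+1}$ and the current state) and then the next post-jump value agree, so the tuples with $k+1$ entries also agree in law; since the diffusive coefficients $\sigma,b$ entering $\Phi$ are identical in both constructions, the full trajectory on $[0,\infty)$ — determined by $(T_k)_k$, the values $X_{T_k}^G$, and the flow — has the same law. The particular assertion $\mathcal{P}_t^Gf(x)=\mathbb{E}[f(X_t^G(x))]=\mathbb{E}[f(\overline{X}_t^G(x))]$ is then immediate. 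The main (and really only) obstacle is bookkeeping: one must be careful that the randomness sources $\Phi$, $(T_k)$, $(Z_k,U_k)$ in (\ref{I5'}) versus $\Phi$, $(T_k)$, $(\overline{Z}_k)$ in (\ref{D1}) are independent in the right way so that "integrate out $U_k$" is legitimate and the induction step genuinely factorizes; this is routine given the construction but deserves an explicit line. Since Lemma \ref{Law} is stated as "well known", I expect the authors' proof to be a terse version of exactly this argument.
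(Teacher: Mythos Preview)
Your proposal is correct. The paper does not actually prove Lemma~\ref{Law}: it is introduced as a ``well known identity of laws result'' and stated without proof, so there is nothing to compare against. Your argument---reducing to the embedded chain at the jump times of the common Poisson clock, noting that the inter-jump flow $\Phi$ is identical in both constructions, and then checking that integrating out the thinning variable $U_k$ in the fictive-shock update yields exactly the real-shock kernel $q_G$ of (\ref{I7})---is precisely the standard one, and your bookkeeping remarks about the independence of $W_\mu$, $(T_k)_k$, and the jump marks are the right caveats to spell out.
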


\section{Differentiability of the Flow\label{SectDifferentiability}}

We will now study the differentiability of the application $x\mapsto
\overline{X}_{t}^{G}(x)$ when assuming $\mu(G)<\infty.$ Let us begin with
introducing some further notations. Given a regular function $g:\mathbb{R}%
_{+}\times E\times\mathbb{R}^{d}\rightarrow\mathbb{R}$ that is differentiable
with respect to the space variable $x\in\mathbb{R}^{d}$ we denote by%
\begin{align}
&  \left\vert g\right\vert _{G,p}=\sup_{0\leq t\leq T}\sup_{x\in\mathbb{R}%
^{d}}\left(  \int_{G}\left\vert g(t,z,x)\right\vert ^{p}\gamma(t,z,x)\mu
(dz)\right)  ^{\frac{1}{p}},\label{D5}\\
&  [g]_{G,p}=\sup_{1\leq p^{\prime}\leq p}\left\vert g\right\vert
_{G,p^{\prime}}.\label{D5b}\\
&  \theta_{q,p}(G)=1+\left\Vert \sigma\right\Vert _{2,q,(\mu,\infty
)}+\left\Vert b\right\Vert _{2,q,\infty}+\sum_{2\leq\left\vert \alpha
\right\vert \leq q}[\partial_{x}^{\alpha}c]_{G,p},\label{D5'}\\
&  a_{p}(G)=\left\Vert \nabla\sigma\right\Vert _{(\mu,\infty)}^{2}+\left\Vert
\nabla b\right\Vert _{\infty}+[\nabla c]_{G,p}^{p},\label{D5a}\\
&  \alpha_{q,p}(C,G)=C\theta_{q,pq}^{q%
{\textstyle\sum\limits_{1\leq n\leq q}}
\frac{1}{n}}(G)\exp\left(  CTq%
{\textstyle\sum\limits_{1\leq n\leq q}}
\frac{1}{n}a_{pq}(G)\right)  . \label{D5''}%
\end{align}
Note that if $q_{1}\leq q_{2}$ and $p_{1}\leq p_{2}$ then $\theta_{q_{1}%
,p_{1}}(G)\leq\theta_{q_{2},p_{2}}(G)$ and $a_{p_{1}}(G)\leq a_{p_{2}}(G)$
(this is the reason of being of $\sup_{1\leq p^{\prime}\leq p}$ in
$[g]_{G,p}).$

In the following we suppose that $\mu(G)<\infty$ and $\theta_{q,p}(G)<\infty
$\ and consider $X_{t}^{G}(x)$ and $\overline{X}_{t}^{G}(x),$ solutions of the
equations (\ref{I5'}) and (\ref{D1}) constructed in the previous section.
Under these hypothesis one may choose a variant of $x\mapsto\overline{X}%
_{t}^{G}(x)$ which is $q$ times differentiable. Our aim is to estimate the
$\mathbb{L}^{p}$ norm of $\partial^{\alpha}\overline{X}_{t}^{G}(x):$

\begin{lemma}
\label{EstimatesPD}Let $\alpha$ be a multi-index with $\left\vert
\alpha\right\vert =k.$\ For every $p\geq2,$ the following inequality holds
true.%
\begin{equation}
\sup_{x\in\mathbb{R}^{d}}\mathbb{E}\left[  \sup_{t\leq T}\left\vert
\partial^{\alpha}\overline{X}_{t}^{G}(x)\right\vert ^{p}\right]  ^{\frac{1}%
{p}}\leq\alpha_{k,p}(C,G). \label{D6}%
\end{equation}

\end{lemma}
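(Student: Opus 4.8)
The plan is to prove (\ref{D6}) by induction on $k=\left\vert \alpha\right\vert $, differentiating the real-shock representation (\ref{D1}) and controlling the resulting system by a Gronwall-type argument adapted to the compound-Poisson structure. For $k=0$ one is really estimating $\overline{X}_{t}^{G}$ itself, which follows from the $\mathbb{L}^{p}$-regularity already available (the standing assumptions on $\sigma$, $b$, $c$, $\gamma$) together with a Burkholder/Gronwall estimate on (\ref{D1}); this is the base case and is essentially routine. For the inductive step, suppose (\ref{D6}) is known for all multi-indices of length $<k$. Differentiating (\ref{D1}) $k$ times in $x$ and using the Faà di Bruno / Leibniz formula, $\partial^{\alpha}\overline{X}_{t}^{G}(x)$ solves a linear SDE of the form
\begin{equation}
\partial^{\alpha}\overline{X}_{t}^{G}=\int_{0}^{t}\!\!\int_{E}\nabla\sigma\cdot\partial^{\alpha}\overline{X}_{s}^{G}\,dW_{\mu}+\int_{0}^{t}\nabla b\cdot\partial^{\alpha}\overline{X}_{s}^{G}\,ds+\sum_{j=1}^{J_{t}}\nabla c(T_{j},\overline{Z}_{j},\overline{X}_{T_{j}-}^{G})\,\partial^{\alpha}\overline{X}_{T_{j}-}^{G}1_{G}(\overline{Z}_{j})+R_{t}^{\alpha},\nonumber
\end{equation}
where the "source" term $R_{t}^{\alpha}$ is a polynomial expression in the lower-order derivatives $\partial^{\beta}\overline{X}^{G}$, $1\leq\left\vert \beta\right\vert <k$, against the higher-order derivatives $\partial_{x}^{\alpha'}\sigma$, $\partial_{x}^{\alpha'}b$, $\partial_{x}^{\alpha'}c$ with $2\leq\left\vert \alpha'\right\vert \leq k$. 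The idea is then: (i) bound $\mathbb{E}[\sup_{s\leq T}|R_{s}^{\alpha}|^{p}]$ using the induction hypothesis and the quantities $[\partial_{x}^{\alpha'}c]_{G,p}$, $\left\Vert \sigma\right\Vert _{2,q,(\mu,\infty)}$, $\left\Vert b\right\Vert _{2,q,\infty}$ entering $\theta_{q,p}(G)$ — here Hölder's inequality and the specific homogeneity built into $\left\vert \cdot\right\vert _{\mathbb{R}_{[k]}}$ (the exponents $\tfrac{k+1}{\left\vert \beta\right\vert }$ in (\ref{A6})) are exactly what make the products of lower-order terms close at the right power; (ii) treat the linear homogeneous part by a Gronwall argument.

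The key technical point in (ii) is the jump sum. Conditionally on $J$, the jump times $T_{j}$ are order statistics of uniforms, so $\mathbb{E}[\sup_{t\leq T}|\sum_{j\leq J_{t}}V_{j}|^{p}]$ with $V_{j}=\nabla c(T_{j},\overline{Z}_{j},\cdot)\partial^{\alpha}\overline{X}_{T_{j}-}^{G}1_{G}(\overline{Z}_{j})$ has to be controlled. I would condition on $J_{t}=n$, use that under the law (\ref{I8}) the conditional density of $\overline{Z}_{j}$ is $q_{G}(T_{j},z,\cdot)\mu(dz)$, which carries precisely the factor $\tfrac{1}{2\Gamma\mu(G)}\gamma(t,z,x)$ on $G$ — so that integrating $|\nabla c(t,z,x)|^{p}$ against this conditional law produces $\tfrac{1}{2\Gamma\mu(G)}\int_{G}|\nabla c|^{p}\gamma\,\mu(dz)\leq\tfrac{1}{2\Gamma\mu(G)}[\nabla c]_{G,p}^{p}$, and the factor $\mu(G)$ cancels against the expected number of jumps $\mathbb{E}[J_{t}]\sim 2\Gamma\mu(G)t$. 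This is the mechanism by which the final bound (\ref{D6}) depends on $G$ only through the $\mu(G)$-free quantities $[\nabla c]_{G,p}$, and not through $\mu(G)$ itself — and it is the heart of why the estimate is uniform enough to pass to the limit $E_{n}\uparrow E$ later (Theorem \ref{M}). Summing over $j$ (discrete Gronwall / Bellman) and then taking expectation in $n$ against the Poisson law turns the geometric accumulation $\prod_{j}(1+\cdots)$ into the exponential $\exp(CTq\sum_{n\leq q}\tfrac1n\,a_{pq}(G))$ appearing in (\ref{D5''}); the diffusion part contributes analogously via Burkholder–Davis–Gundy, producing the $\left\Vert \nabla\sigma\right\Vert _{(\mu,\infty)}^{2}+\left\Vert \nabla b\right\Vert _{\infty}$ terms in $a_{p}(G)$.

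Finally I would assemble the constants. The induction produces, at level $k$, a factor $\theta_{k,\cdot}(G)$ from the source term and an exponential factor from Gronwall; composing with the bounds at levels $1,\dots,k-1$ (each of which already carries its own $\theta$ and exponential) gives the nested product that, after bounding $\theta_{k',p'}(G)\leq\theta_{q,pq}(G)$ monotonically and collecting the harmonic-sum exponents, reduces to the single expression $\alpha_{k,p}(C,G)=C\,\theta_{q,pq}^{\,q\sum_{1\leq n\leq q}1/n}(G)\exp(CTq\sum_{1\leq n\leq q}\tfrac1n a_{pq}(G))$ claimed in (\ref{D6}); one chooses the universal constant $C$ large enough at the end to absorb the combinatorial factors from Leibniz/Faà di Bruno and from BDG. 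I expect the main obstacle to be the bookkeeping in step (i)–(ii): getting the exponents in the products of lower-order derivatives to match the homogeneity weights in (\ref{A6}) exactly, and making the Poisson-conditioning argument in the jump term rigorous enough that the dependence on $\mu(G)$ genuinely drops out — everything else is a careful but standard BDG + Gronwall computation. The requirement $p\geq 2$ is used so that BDG applies to the $W_\mu$-integral; the jump part would in principle work for $p\geq 1$, but we need the diffusion estimate, hence the stated range.
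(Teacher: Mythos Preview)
Your overall strategy---induction on $k$, differentiate (\ref{D1}) to get a linear equation in $\partial^{\alpha}\overline{X}^{G}$ with a source term polynomial in lower-order derivatives, then close by a Gronwall argument---is exactly the architecture of the paper's proof. The chain-rule bookkeeping you describe matches the paper's Steps~1--2, and the homogeneity weights in (\ref{A6}) play precisely the role you anticipate.

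The one substantive difference is how the jump sum is controlled. You propose to stay in the real-shock representation, condition on $J_{t}=n$, integrate out each $\overline{Z}_{j}$ against its conditional law (\ref{I8}), and run a discrete Gronwall. This can in principle be made rigorous, but the sequential dependence of $\overline{Z}_{j}$ on the trajectory $\overline{X}^{G}_{T_{j}-}$ makes the bookkeeping awkward: each $\overline{Z}_{j}$ is coupled to all previous jumps through the state, so the ``geometric accumulation $\prod_{j}(1+\cdots)$'' you describe is not a product over independent factors, and turning it into the claimed exponential requires care. The paper bypasses this entirely by invoking the identity in law (Lemma~\ref{Law}): the system $(\overline{X}^{G},(\partial^{\beta}\overline{X}^{G})_{\left\vert\beta\right\vert\le k})$ has the same law as a system $(X^{G},(Y_{\beta})_{\left\vert\beta\right\vert\le k})$ in which the jump part is a genuine integral against the Poisson measure $N_{\mu}$ with indicator $1_{\{u\le\gamma\}}$. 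In that representation the linear equation for $Y_{\alpha}$ falls under a pre-packaged moment estimate (Proposition~\ref{Moments}), which combines Burkholder's inequality for compensated Poisson integrals with a time-splitting Gronwall. The cancellation of $\mu(G)$ that you correctly identify then happens automatically inside $[\nabla c]_{G,p}$ via the compensator $\gamma\,\mu(dz)\,du$, with no explicit conditioning on $J_{t}$. Your route gives a more hands-on picture of where uniformity in $\mu(G)$ comes from; the paper's route is modular---everything reduces to Proposition~\ref{Moments}---and avoids re-deriving Burkholder-type bounds for the dependent sequence $(\overline{Z}_{j})_{j}$.

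One minor correction: the base of the induction is $k=1$, not $k=0$, and it too is dispatched by Proposition~\ref{Moments} with empty source term.
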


The proof is postponed to Section \ref{SectionProofLemmaEstimates}. The main
idea consists in providing estimates for the chain rule distinguishing first
order and higher order derivatives. Subsequently, these estimates will be
applied for the different components in the differential formula of
$\partial^{\alpha}\overline{X}_{t}^{G}(x).$ Next, one provides estimates for
generic equations of this type (having a linear form) and uses a recurrence
argument over $\left\vert \alpha\right\vert .$

We give now some consequences of (\ref{D6}).

\begin{corollary}
\label{CorollaryDiff1}\textbf{A}. Let $\alpha$ be a multi-index with
$\left\vert \alpha\right\vert =q\geq1$ and let $p\geq2$ and $\eta>0$ be
given$.$ For every $g:\mathbb{R}_{+}\times\mathbb{R}^{d}\rightarrow\mathbb{R}$
that is smooth with respect to $x\in\mathbb{R}^{d},$ the following inequality
holds true.
\begin{equation}
\left\Vert \sum_{k=1}^{J_{t}}\left\vert \partial^{\alpha}g\left(
T_{k},\overline{X}_{T_{k}-}^{G}(x)\right)  \right\vert \right\Vert _{p}\leq
C\left\Vert g\right\Vert _{1,q,\infty}\Gamma\mu(G)\left(  t\vee1\right)
\alpha_{q,(1+\eta)pq}^{q}(C,G). \label{D12}%
\end{equation}

\textbf{B} Let $g:\mathbb{R}_{+}\times\mathbb{R}^{d}\rightarrow\mathbb{R}$ be
a function that is smooth with respect to $x\in\mathbb{R}^{d}.$ Then,
\begin{equation}
\left\Vert \sum_{k=1}^{J_{t}}1_{G}(\overline{Z}_{k})\left\vert \partial
^{\alpha}g\left(  T_{k},\overline{Z}_{k},\overline{X}_{T_{k}-}^{G}(x)\right)
\right\vert \right\Vert _{p}\leq C\left(  \sum_{1\leq\left\vert \beta
\right\vert \leq\left\vert \alpha\right\vert }[\partial^{\beta}g]_{G,(1+\eta
)p}\alpha_{q,\frac{(1+\eta)}{\eta}pq}^{q}(C,G)\right)  . \label{D13}%
\end{equation}

\end{corollary}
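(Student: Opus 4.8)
Fix $q=|\alpha|$; since $\mu(G)<\infty$ and $\theta_{q,p}(G)<\infty$ we may work with the $q$-times differentiable version of $x\mapsto\overline{X}_{t}^{G}(x)$ built above, so that (\ref{D6}) is available. The argument combines, in this order: a Fa\`a di Bruno expansion for $\partial^{\alpha}$ of a composition, the uniform flow estimates (\ref{D6}), and moment bounds for the jump sum.

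\textbf{Step 1: chain rule and reduction to the norm (\ref{A6}).} With $F_{k}=\overline{X}_{T_{k}-}^{G}(x)$, the Fa\`a di Bruno formula writes $\partial^{\alpha}(h(F_{k}))$ as a finite sum over partitions $\pi$ of the multi-index $\alpha$ of terms $(\partial^{|\pi|}h)(F_{k})\prod_{B\in\pi}\partial^{\alpha_{B}}F_{k}$, with $1\le|\pi|\le q$ and $\sum_{B}|\alpha_{B}|=q$. The one-block partition contributes a term \emph{linear} in the top-order derivative $\partial^{\alpha}F_{k}$; in every other term all sub-multi-indices have length $\le q-1$, and a weighted Young inequality (weights $|\alpha_{B}|/q$) bounds $\prod_{B}|\partial^{\alpha_{B}}F_{k}|$ by $\sum_{1\le|\beta|\le q-1}|\partial^{\beta}F_{k}|^{q/|\beta|}=|(\partial F_{k})_{[q-1]}|_{\mathbb{R}_{[q-1]}^{d}}$ --- which is precisely why (\ref{A6}) uses the exponent $(k+1)/|\beta|$. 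Consequently, for $k\le J_{t}$ (so $T_{k}-\le T$) and $C=C(d,q)$,
\[
\big|\partial^{\alpha}g(T_{k},F_{k})\big|\le C\,\|g\|_{1,q,\infty}\,V(x),\qquad V(x):=\sup_{r\le T}\Big(|\partial^{\alpha}\overline{X}_{r}^{G}(x)|+|(\partial\overline{X}_{r}^{G}(x))_{[q-1]}|_{\mathbb{R}_{[q-1]}^{d}}\Big),
\]
and in case \textbf{B} the same bound holds with $\|g\|_{1,q,\infty}$ replaced by $\sum_{1\le|\beta|\le q}|\partial^{\beta}g(T_{k},\overline{Z}_{k},F_{k})|$ (the flow-derivative products not involving $z$).

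\textbf{Step 2: part A.} Summing over $k\le J_{t}$ gives $\sum_{k\le J_{t}}|\partial^{\alpha}g(T_{k},F_{k})|\le C\,\|g\|_{1,q,\infty}\,J_{t}\,V(x)$. I then apply H\"older with the conjugate exponents $\frac{(1+\eta)p}{\eta}$ and $(1+\eta)p$. The factor $\|J_{t}\|_{(1+\eta)p/\eta}$ is controlled by the moments of the Poisson process $J$ of intensity $2\Gamma\mu(G)$ and contributes $\Gamma\mu(G)(t\vee1)$ up to a constant depending on $p,\eta$. For $\|V(x)\|_{(1+\eta)p}$ I expand the $\mathbb{R}_{[q-1]}^{d}$-norm and apply (\ref{D6}) termwise: $\|\sup_{r\le T}|\partial^{\beta}\overline{X}_{r}^{G}(x)|^{q/|\beta|}\|_{(1+\eta)p}=(\mathbb{E}[\sup_{r\le T}|\partial^{\beta}\overline{X}_{r}^{G}(x)|^{q(1+\eta)p/|\beta|}])^{1/((1+\eta)p)}\le\alpha_{|\beta|,\,q(1+\eta)p/|\beta|}(C,G)^{q/|\beta|}$, which is $\le\alpha_{q,(1+\eta)pq}(C,G)^{q}$ because $\alpha_{q,p}$ is non-decreasing in both indices and $\alpha_{q,p}(C,G)\ge1$ (note $q(1+\eta)p/|\beta|\ge(1+\eta)p\ge2$, so (\ref{D6}) applies); the top-order term is treated the same way. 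This gives (\ref{D12}).

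\textbf{Step 3: part B.} Here the $z$-variable is averaged against the conditional law (\ref{I8}) of $\overline{Z}_{k}$: by (\ref{I7}) the predictable compensator of $\sum_{k}\delta_{(T_{k},\overline{Z}_{k})}$ is $2\Gamma\mu(G)q_{G}(r,z,\overline{X}_{r-}^{G})\mu(dz)dr$, which on $G$ reduces to $1_{G}(z)\gamma(r,z,\overline{X}_{r-}^{G})\mu(dz)dr$ --- the Poisson intensity $2\Gamma\mu(G)$ being absorbed by the normalising constant in $q_{G}$. Using the Step 1 bound, I split off the $z$-free factor $V(x)$ by H\"older (exponents $(1+\eta)p$ and $\frac{(1+\eta)p}{\eta}$), estimate $\|V(x)\|_{(1+\eta)p/\eta}\le C\,\alpha_{q,\frac{(1+\eta)}{\eta}pq}^{q}(C,G)$ exactly as in Step 2, and bound the remaining $\mathbb{L}^{(1+\eta)p}$-norm of $\sum_{k\le J_{t}}1_{G}(\overline{Z}_{k})|\partial^{\beta}g(T_{k},\overline{Z}_{k},F_{k})|$ by a Kunita-type $\mathbb{L}^{(1+\eta)p}$-moment inequality for this pure-jump integral relative to the compensator above: since $\int_{0}^{t}\int_{G}|\partial^{\beta}g(r,z,\overline{X}_{r-}^{G})|^{p'}\gamma(r,z,\overline{X}_{r-}^{G})\mu(dz)dr\le t\,[\partial^{\beta}g]_{G,p'}^{p'}$ for $1\le p'\le(1+\eta)p$ (and similarly for the compensator total mass), this norm is $\le C\,[\partial^{\beta}g]_{G,(1+\eta)p}$, with no $\Gamma\mu(G)$-factor and $C$ absorbing a factor $t\vee1\le T\vee1$. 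Summing over $1\le|\beta|\le q$ yields (\ref{D13}).

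\textbf{Expected main obstacle.} No individual estimate is deep; the difficulty is doing the bookkeeping in Steps 2--3 so that the powers and arguments of $\alpha_{q,\cdot}$ and the order of the $[\,\cdot\,]_{G,\cdot}$-norm come out exactly as in (\ref{D12})--(\ref{D13}). Three things must be kept consistent at once: the top-order derivative $\partial^{\alpha}\overline{X}^{G}$ has to stay \emph{linear} in the Fa\`a di Bruno expansion (so that, after using (\ref{D6}), it costs only one power of $\alpha_{q,\cdot}$); the lower-order products must be routed through the tailored norm (\ref{A6}); and the H\"older split must be chosen so as to produce the exponents $(1+\eta)p$ and $\frac{(1+\eta)}{\eta}pq$. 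For part \textbf{B} one additionally has to recognise the identity $1_{G}q_{G}=\frac{1}{2\Gamma\mu(G)}1_{G}\gamma$ and pass through the compensator of $\sum_{k}\delta_{(T_{k},\overline{Z}_{k})}$; this is exactly what replaces ``$\gamma$-free counting'' by the $\gamma$-weighted norms $[\,\cdot\,]_{G,p}$ and removes the $\Gamma\mu(G)$-prefactor.
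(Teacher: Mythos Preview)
Your proposal is correct and follows essentially the same route as the paper: chain rule/Fa\`a di Bruno to factor the flow derivatives from the $g$-derivatives, H\"older with exponents $(1+\eta)p$ and $(1+\eta)p/\eta$, the flow estimate (\ref{D6}), and a jump-sum moment bound. Two minor differences are worth noting. First, the paper uses the slightly cruder envelope $A_{q}=1\vee\sum_{1\le|\rho|\le q}\sup_{s\le t}|\partial^{\rho}\overline{X}_{s}^{G}(x)|^{q}$ (uniform power $q$) in place of your $V(x)$ built from the $\mathbb{R}_{[q-1]}^{d}$-norm; both lead to the same final bound $\alpha_{q,(1+\eta)pq}^{q}$. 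Second, for Part~\textbf{B} the paper does not argue via the compensator of $\sum_{k}\delta_{(T_{k},\overline{Z}_{k})}$: instead it invokes Lemma~\ref{Law} to identify the law of $B_{q}(\beta)=\sum_{k}1_{G}(\overline{Z}_{k})|(\partial^{\beta}g)(T_{k},\overline{Z}_{k},\overline{X}_{T_{k}-}^{G})|$ with that of $\int_{0}^{t}\!\int_{G\times[0,2\Gamma]}|\partial^{\beta}g(s,z,X_{s-}^{G})|1_{\{u\le\gamma(s,z,X_{s-}^{G})\}}N_{\mu}(ds,dz,du)$ and then applies the ready-made estimate (\ref{M3}); your compensator argument is an equivalent way to reach the same $[\partial^{\beta}g]_{G,(1+\eta)p}$ bound.
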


For our readers' sake, the proof is provided in Section
\ref{SectionProofCorDiff}.

\section{Differentiability of the Semigroup\label{SectionDiffSemigroup}}

Before giving the main result on the semigroup of operators, we recall that
the law of $\overline{Z}^{Jt}=(\overline{Z}_{1},...,\overline{Z}_{J_{t}})$
has, as density, $p_{J_{t}}(x,z^{J_{t}})\mu(dz_{1}),...,\mu(dz_{J_{t}})$ with
\[
p_{J_{t}}(x,z^{J_{t}})=%
{\textstyle\prod_{k=1}^{J_{t}}}
q_{G}(T_{k},z_{k},x_{T_{k}-}(x,z^{k-1})).
\]
This explicit formulation allows one to obtain a very important step in
proving regularity of the semigroup.

\begin{lemma}
\label{CorollaryDiff2}Let us assume that $\Gamma\mu(G)\geq1$ and
$\alpha_{q,2qp}(C,G)<\infty$ for some given $q\in\mathbb{N}\ $and $p\geq2.$
Let $\alpha$ be a multi-index with $\left\vert \alpha\right\vert =q\geq1.$
Then, there exists a universal constant $C$ (depending on $p$ and $q$ but not
on $G)$ such that%
\begin{equation}
\left\Vert \partial^{\alpha}\ln p_{J_{t}}(x,\overline{Z}^{J_{t}})\right\Vert
_{p}\leq C\left(  t\vee1\right)  \times\alpha_{q,2pq}^{q}(C,G))\times
(\Gamma_{G,q}(\gamma)+%
{\textstyle\sum_{1\leq\left\vert \beta\right\vert \leq q}}
[\partial^{\beta}\ln\gamma]_{G,2p}) \label{D14}%
\end{equation}
with $[\ln\gamma]_{G,2p}$\ defined in (\ref{D5b}) and
\begin{equation}
\Gamma_{G,q}(\gamma)=\sup_{t\leq T}\sup_{x\in R^{d}}%
{\textstyle\sum_{h=1}^{q}}
{\textstyle\sum_{1\leq\left\vert \rho\right\vert \leq h}}
\left(  \int_{G}\left\vert \partial^{\rho}\ln\gamma(t,z,x)\right\vert
^{\frac{h}{\left\vert \rho\right\vert }}\gamma(t,z,x)\mu(dz)\right)
^{\frac{q}{h}}. \label{D14'}%
\end{equation}

\end{lemma}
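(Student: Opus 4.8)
The plan is to differentiate the explicit product formula
\[
\ln p_{J_t}(x,z^{J_t})=\sum_{k=1}^{J_t}\ln q_G\bigl(T_k,z_k,x_{T_k-}(x,z^{k-1})\bigr)
\]
and to control each summand by appealing to Corollary \ref{CorollaryDiff1} and Lemma \ref{EstimatesPD}. First I would record that on the event $\{z_k\in G\}$ one has, by \eqref{I7}, $\ln q_G(T_k,z_k,y)=-\ln(2\Gamma\mu(G))+\ln\gamma(T_k,z_k,y)$; since the constant $-\ln(2\Gamma\mu(G))$ does not depend on $x$, it disappears under any derivative $\partial^\alpha$ with $|\alpha|\ge 1$. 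On the complementary event $\{z_k=z_\ast\}$ the contribution is $\ln\Theta_G(T_k,x_{T_k-})$, and here I would exploit that $1-\Theta_G(t,y)=\frac{1}{2\Gamma\mu(G)}\int_G\gamma(t,z,y)\,d\mu(z)\le \frac12$ by \eqref{Not5}, so $\Theta_G\ge \frac12$ is bounded away from zero uniformly; consequently $\ln\Theta_G$ is a smooth bounded function of $\Theta_G$ with all derivatives bounded, and the Faà di Bruno expansion of $\partial^\alpha \ln\Theta_G(t,x)$ is a polynomial in the derivatives $\partial^\beta\Theta_G$, $1\le|\beta|\le|\alpha|$, each of which is $\frac{1}{2\Gamma\mu(G)}\int_G\partial^\beta\gamma(t,z,y)\,d\mu(z)$.

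Next I would apply the chain rule (Faà di Bruno) to $\partial^\alpha\bigl[\ln q_G(T_k,z_k,x_{T_k-}(x,z^{k-1}))\bigr]$, which produces sums of products of the form $\bigl(\partial^\rho\ln\gamma\bigr)(T_k,z_k,x_{T_k-})\prod_j \partial^{\beta_j}x_{T_k-}^{i_j}(x,z^{k-1})$ with $\sum_j|\beta_j|=|\alpha|$ and $|\rho|\le|\alpha|$ (and similarly with $\ln\Theta_G$ in place of $\ln\gamma$). Summing over $k=1,\dots,J_t$, the $\ln\gamma$-part has exactly the structure of the left-hand side of \eqref{D13} in Corollary \ref{CorollaryDiff1}.B (taking $g=\ln\gamma$ or its derivatives), which yields the factor $\sum_{1\le|\beta|\le q}[\partial^\beta\ln\gamma]_{G,2p}$ against a power $\alpha_{q,\cdot}^q(C,G)$; the $\ln\Theta_G$-part, written with the function $g(t,x)=\int_G\partial^\beta\gamma(t,z,x)\,d\mu(z)$ composed through the smooth $\ln$, fits the left-hand side of \eqref{D12} in Corollary \ref{CorollaryDiff1}.A, producing the term $\Gamma_{G,q}(\gamma)$ once the $\mathbb{L}^p(\mu)$-norms of $\partial^\rho\ln\gamma$ weighted by $\gamma$ are collected into the quantity defined in \eqref{D14'}. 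The mixed Hölder exponents $\frac{h}{|\rho|}$ appearing in \eqref{D14'} come out of splitting, by Hölder's inequality in $\mu\lfloor G$, the product $\prod_j\partial^{\beta_j}\gamma$ arising in the Faà di Bruno formula for $\partial^\beta\Theta_G$, grouped according to the partition sizes; the powers $\frac{q}{h}$ and the overall power $q$ on $\alpha_{q,2pq}$ reflect, as in the statements of Corollary \ref{CorollaryDiff1}, the recursive handling of the flow-derivative factors $\partial^{\beta_j}x_{T_k-}$ via \eqref{D6} together with repeated Hölder estimates (the loss $p\mapsto 2pq$ or $pq$ being exactly what is built into those corollaries).

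Finally I would assemble the two contributions using Minkowski's inequality in $\mathbb{L}^p(\Omega)$ and absorb all purely combinatorial constants (number of terms in the Faà di Bruno formula, number of multi-indices of length $\le q$) into the universal constant $C$, which depends on $p$ and $q$ but not on $G$. The main obstacle I anticipate is bookkeeping the exponents: one must verify that the Hölder splittings needed for the $\Theta_G$-terms are compatible with the norm $[\,\cdot\,]_{G,2p}$ and with the exponent structure $\frac{h}{|\rho|}$ in $\Gamma_{G,q}(\gamma)$, and that the flow-derivative factors, when estimated by \eqref{D6} at the inflated exponents, close up without exceeding the exponent $2pq$ already assumed finite via $\alpha_{q,2qp}(C,G)<\infty$. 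A secondary technical point is ensuring the constant $-\ln(2\Gamma\mu(G))$ and, more subtly, the normalizing factor $\frac{1}{2\Gamma\mu(G)}$ inside $\Theta_G$ genuinely cancel or remain harmless under differentiation — this is where the hypothesis $\Gamma\mu(G)\ge1$ is used, guaranteeing that $\frac{1}{2\Gamma\mu(G)}\le\frac12$ so that no blow-up in $G$ enters the derivative bounds. Once these exponent compatibilities are checked, the inequality \eqref{D14} follows by collecting terms.
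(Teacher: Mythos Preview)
Your proposal is correct and follows the paper's proof essentially line for line: the same split into the $\ln\Theta_G$ sum (handled via \eqref{D12} with $g=\ln\Theta_G$, $\eta=1$) and the $\ln\gamma$ sum (handled via \eqref{D13} with $g=\ln\gamma$, $\eta=1$), together with the Fa\`a di Bruno/Young argument that produces $\Gamma_{G,q}(\gamma)$. One small correction on the role of $\Gamma\mu(G)\ge1$: it is not used to keep $\frac{1}{2\Gamma\mu(G)}$ small, but rather to ensure $\sum_{r=1}^{q}(\Gamma\mu(G))^{-r}\le C(\Gamma\mu(G))^{-1}$, so that the bound $\|\ln\Theta_G\|_{1,q,\infty}\le C\Gamma_{G,q}(\gamma)/(\Gamma\mu(G))$ carries exactly one factor $(\Gamma\mu(G))^{-1}$, which then cancels the factor $\Gamma\mu(G)$ produced by \eqref{D12}.
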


Before going any further, we make the following elementary remark.

\begin{remark}
For every smooth function $\phi:\mathbb{R}^{d}\rightarrow\mathbb{R}_{+}^{\ast
}$ and any multi-index $\rho$ with $\left\vert \rho\right\vert =q,$ one gets
the existence of some function $P_{\rho}^{\phi}:\mathbb{R}^{d}\rightarrow
\mathbb{R}_{+}^{\ast}$ such that%
\begin{equation}
\partial^{\rho}\phi(x)=\phi(x)P_{\rho}^{\phi}(x)\text{ and }\left\vert
P_{\rho}^{\phi}(x)\right\vert \leq C%
{\textstyle\sum_{1\leq\left\vert \beta\right\vert \leq q}}
\left\vert \partial^{\beta}\ln\phi(x)\right\vert ^{\frac{q}{\left\vert
\beta\right\vert }}, \label{D15}%
\end{equation}
for all $x\in\mathbb{R}^{d}.$ In order to prove this one first writes
$\phi=\exp(\ln\phi))$ and then takes derivatives. One obtains $\phi$
multiplied with a polynomial applied to terms of type $\partial^{\beta}\ln
\phi$ i.e. some linear combination of products of type $\prod_{i=1}%
^{r}\partial^{\beta(i)}\ln\phi$ with $\sum_{i=1}^{r}\left\vert \beta
(i)\right\vert =q.$ Using Young's inequality with $p_{i}=\frac{q}{\left\vert
\beta(i)\right\vert },$ we obtain%
\[
\left\vert
{\textstyle\prod_{i=1}^{r}}
\partial^{\beta(i)}\ln\phi\right\vert \leq%
{\textstyle\sum_{i=1}^{r}}
\frac{\left\vert \beta(i)\right\vert }{q}\left\vert \partial^{\beta(i)}\ln
\phi\right\vert ^{\frac{q}{\left\vert \beta(i)\right\vert }}.
\]
And this proves the upper bound for $\left\vert P_{\rho}^{\phi}(x)\right\vert
$ given in (\ref{D15}).
\end{remark}

We are now able to proceed with the proof of Lemma \ref{CorollaryDiff2}.

\begin{proof}
We have%
\[
\left.
\begin{array}
[c]{l}%
\partial^{\alpha}\ln p_{J_{t}}(x,z^{J_{t}})=\overset{J_{t}}{\underset{k=1}{%
{\textstyle\sum}
}}1_{\{z_{\ast}\}}(z_{k})\partial^{\alpha}(\ln\Theta_{G}(T_{k},x_{T_{k}%
-}(x,z^{k-1})))\\
\text{ \ \ \ \ \ \ \ \ \ \ \ \ \ \ \ \ \ \ \ \ \ \ \ \ }+\overset{J_{t}%
}{\underset{k=1}{%
{\textstyle\sum}
}}1_{G}(z_{k})\partial^{\alpha}(\ln\gamma(T_{k},z_{k},x_{T_{k}-}%
(x,z^{k-1})))\\
\text{ \ \ \ \ \ \ \ \ \ \ \ \ \ \ \ \ \ \ \ }=:s_{1}(x,z^{J_{t}}%
)+s_{2}(x,z^{J_{t}}).
\end{array}
\right.
\]
In order to estimate $s_{1}(x,z^{J_{t}})$ we will use (\ref{D12}) for
$g=\ln\Theta_{G}.$ Recalling that $\gamma$ is upper-bounded by $\Gamma,$ we
have $\Theta_{G}(t,x)\geq\frac{1}{2}.$ Then, for every multi-index $\alpha$
with $\left\vert \alpha\right\vert =q$ one has
\[
\left.
\begin{array}
[c]{l}%
\left\vert \partial^{\alpha}\ln\Theta_{G}(t,x)\right\vert \leq%
{\textstyle\sum_{r=1}^{q}}
{\textstyle\sum_{\left\vert \beta(1)\right\vert +...+\left\vert \beta
(r)\right\vert =q}}
{\textstyle\prod_{i=1}^{r}}
\left\vert \partial^{\beta(i)}\Theta_{G}(t,x)\right\vert \\
\text{ \ \ \ \ \ \ \ \ \ \ \ \ \ \ \ \ \ \ }\leq%
{\textstyle\sum_{r=1}^{q}}
\frac{C}{(\Gamma\mu(G))^{r}}%
{\textstyle\sum_{\left\vert \beta(1)\right\vert +...+\left\vert \beta
(r)\right\vert =q}}
{\textstyle\prod_{i=1}^{r}}
\int_{G}\left\vert \partial^{\beta(i)}\gamma(t,z,x)\right\vert \mu(dz).
\end{array}
\right.
\]
Using Young's inequality and (\ref{D15}) (recall that $\Gamma\mu(G)\geq1),$
one proves%
\[
\left.
\begin{array}
[c]{l}%
\overset{r}{\underset{i=1}{%
{\textstyle\prod}
}}\int_{G}\left\vert \partial^{\beta(i)}\gamma(t,z,x)\right\vert \mu(dz)\leq
C\underset{i=1}{\overset{r}{%
{\textstyle\sum}
}}\left(  \int_{G}\left\vert \partial^{\beta(i)}\gamma(t,z,x)\right\vert
d\mu(z)\right)  ^{\frac{q}{\left\vert \beta(i)\right\vert }}\\
\text{ \ \ \ \ \ \ \ \ \ \ \ \ \ \ \ \ \ \ \ \ \ \ \ \ \ \ \ \ \ \ \ \ \ \ }%
=C\underset{i=1}{\overset{r}{%
{\textstyle\sum}
}}\left(  \int_{G}\left\vert P_{\beta(i)}^{\gamma}(t,z,x)\right\vert
\gamma(t,z,x)\mu(dz)\right)  ^{\frac{q}{\left\vert \beta(i)\right\vert }}\\
\text{ \ \ \ \ \ \ \ \ \ \ \ \ \ \ \ \ \ \ \ \ \ \ \ \ \ \ \ \ \ \ \ \ \ \ }%
\leq C\underset{1\leq\left\vert \rho\right\vert \leq h\leq q}{%
{\textstyle\sum}
}\left(  \int_{G}\left\vert \partial^{\rho}\ln\gamma(t,z,x)\right\vert
^{\frac{h}{\left\vert \rho\right\vert }}\gamma(t,z,x)\mu(dz)\right)
^{\frac{q}{h}}\leq C\Gamma_{G,q}(\gamma).
\end{array}
\right.
\]
We conclude that $\left\vert \partial^{\alpha}\ln\Theta_{G}(t,x)\right\vert
\leq\frac{C}{\Gamma\mu(G)}\Gamma_{G,q}(\gamma).$ As a consequence of
(\ref{D12}) (with $\eta=1$), one gets%
\[
\left(  \mathbb{E}\left[  \left\vert s_{1}\left(  x,\overline{Z}^{J_{t}%
}\right)  \right\vert ^{p}\right]  \right)  ^{\frac{1}{p}}\leq C\left(
t\vee1\right)  \Gamma_{G,q}(\gamma)\alpha_{q,2pq}^{q}(C,G).
\]
To estimate the second term, we use (\ref{D13}) with $g(t,z,x)=\ln
\gamma(t,z,x)$ and for $\eta=1$ to get an upper bound given by
\[
\mathbb{E}\left[  \left\vert
{\textstyle\sum_{k=1}^{J_{t}}}
1_{G}(\overline{Z}_{k})\partial^{\alpha}\ln\gamma\left(  T_{k},z_{k},\left(
\overline{X}_{T_{k}-}(x)\right)  \right)  \right\vert ^{p}\right]  \leq
C\alpha_{q,2pq}^{q}(C,G)%
{\textstyle\sum_{1\leq\left\vert \beta\right\vert \leq q}}
[\partial^{\beta}\ln\gamma]_{G,2p}.
\]
The proof is now complete.
\end{proof}

We now discuss the differentiability of the semigroup associated with our
process. To this purpose, we let, for $f$ regular enough,%
\[
\mathcal{P}_{t}^{G}f(x)=\mathbb{E}\left[  f\left(  X_{t}^{G}(x)\right)
\right]  =\mathbb{E}\left[  f\left(  \overline{X}_{t}^{G}(x)\right)  \right]
=\mathbb{E}\left[  \int_{E^{J_{t}}}f(x_{t}(x,z^{J_{t}}))p_{J_{t}}(x,z^{J_{t}%
})\mu(dz_{1})...\mu(dz_{J_{t}})\right]  .
\]

\begin{theorem}
\label{ThDiffSemigroup1}We assume (\ref{Not7}),(\ref{Not6}),(\ref{Not5}%
),(\ref{Not8}) and (\ref{G5'}) to hold true. Then, for every $q\in\mathbb{N},$
there exists a constant $C>0$ independent of $G$ such that%
\begin{equation}
\left\Vert \mathcal{P}_{t}^{G}f\right\Vert _{q,\infty}\leq C\left\Vert
f\right\Vert _{q,\infty}\times\left(  t\vee1\right)  ^{q}\times\alpha
_{q,4q}^{2q}(C,G)\times\left(  1+\Gamma_{G,q}(\gamma)+\sum_{1\leq\left\vert
\beta\right\vert \leq q}[\partial^{\beta}\ln\gamma]_{G,4q}\right)  ^{q}.
\label{D16}%
\end{equation}

\end{theorem}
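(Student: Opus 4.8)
The plan is to differentiate the representation
$\mathcal{P}_{t}^{G}f(x)=\mathbb{E}\left[\int_{E^{J_{t}}}f(x_{t}(x,z^{J_{t}}))p_{J_{t}}(x,z^{J_{t}})\mu(dz_{1})\dots\mu(dz_{J_{t}})\right]$
with respect to $x$ and to control each factor in the resulting expression by the estimates already established in Lemma \ref{EstimatesPD}, Corollary \ref{CorollaryDiff1} and Lemma \ref{CorollaryDiff2}. Writing the integral over $E^{J_{t}}$ as an expectation with respect to the law of $\overline{Z}^{J_{t}}$ (whose density relative to the product measure is exactly $p_{J_{t}}$), one has $\mathcal{P}_{t}^{G}f(x)=\mathbb{E}\left[f(x_{t}(x,\overline{Z}^{J_{t}}))\right]$; but differentiating in $x$ is awkward in this form because the law of $\overline{Z}^{J_{t}}$ itself depends on $x$. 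Hence I would keep the ``frozen measure'' form, where the $z$'s are integrated against the fixed measure $\mu(dz_{1})\dots\mu(dz_{J_{t}})$ and the $x$-dependence sits entirely inside $f(x_{t}(x,z^{J_{t}}))$ and inside $p_{J_{t}}(x,z^{J_{t}})$. Then $\partial^{\alpha}\mathcal{P}_{t}^{G}f(x)$ is, by the Leibniz rule, a sum over partitions of $\alpha$ of terms of the form
$\mathbb{E}\left[\int (\partial^{\beta_{0}}f)(x_{t}(x,z^{J_{t}}))\cdot(\text{product of }\partial x_{t}\text{-factors})\cdot(\text{product of }\partial^{\gamma}p_{J_{t}}/p_{J_{t}}\text{-type factors})\cdot p_{J_{t}}\,\mu^{\otimes J_{t}}(dz)\right]$,
after which one rewrites $\int(\cdots)p_{J_{t}}\mu^{\otimes J_{t}}(dz)=\mathbb{E}[(\cdots)|_{z=\overline{Z}^{J_{t}}}]$ to bring everything back under the expectation over $\overline{Z}$.

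The key step is to make the Leibniz expansion precise. Using the Faà di Bruno / chain-rule bookkeeping already invoked in the proof of Lemma \ref{EstimatesPD}, $\partial^{\alpha}\big(f(x_{t}(x,z))\big)$ is a linear combination of products $(\partial^{\beta}f)(x_{t}(x,z))\prod_{i}\partial^{\rho_{i}}x_{t}(x,z)$ with $\sum_{i}|\rho_{i}|=|\alpha|$ and $|\beta|=$ (number of factors) $\le|\alpha|$; and $\partial^{\gamma}p_{J_{t}}=p_{J_{t}}\cdot P_{\gamma}^{p_{J_{t}}}$ with $P_{\gamma}^{p_{J_{t}}}$ bounded by a sum of products of $\partial^{\rho}\ln p_{J_{t}}$-terms as in (\ref{D15}). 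Collecting, $\partial^{\alpha}\mathcal{P}_{t}^{G}f(x)$ is bounded in absolute value by $\left\Vert f\right\Vert_{q,\infty}$ times an expectation of a product of $|\partial^{\rho_{i}}\overline{X}_{t}^{G}(x)|$-factors and $|\partial^{\rho}\ln p_{J_{t}}(x,\overline{Z}^{J_{t}})|$-factors, with total derivative order $\le q$ in each group. Then I apply Hölder's inequality to split this expectation into a product of $\mathbb{L}^{p}$-norms (choosing the conjugate exponents so that each factor receives exponent comparable to $q$, which is where the $4q$ and $2q$ in the statement come from), bound the $\partial^{\rho_{i}}\overline{X}$-norms by $\alpha_{|\rho_{i}|,p}(C,G)\le\alpha_{q,4q}(C,G)$ via Lemma \ref{EstimatesPD}, and bound the $\partial^{\rho}\ln p_{J_{t}}$-norms by $C(t\vee1)\alpha_{q,2pq}^{q}(C,G)\big(\Gamma_{G,q}(\gamma)+\sum_{1\le|\beta|\le q}[\partial^{\beta}\ln\gamma]_{G,2p}\big)$ via Lemma \ref{CorollaryDiff2}. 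Since at most $q$ such $\ln p_{J_{t}}$-factors can occur in any term, the $\ln\gamma$-bracket is raised to at most the power $q$, producing the $(1+\Gamma_{G,q}(\gamma)+\sum[\partial^{\beta}\ln\gamma]_{G,4q})^{q}$ factor; the $(t\vee1)$ powers and the $\alpha_{q,4q}^{2q}(C,G)$ arise by multiplying the at-most-$q$ copies of the per-factor bounds and absorbing exponents. The case $|\alpha|=0$ is trivial ($\|\mathcal{P}_t^G f\|_\infty\le\|f\|_\infty$), and one finally sums over the finitely many multi-indices $\alpha$ with $|\alpha|\le q$ and over the finitely many partitions, absorbing the combinatorial count into the universal constant $C$.

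The main obstacle, and the place where care is genuinely needed, is the combinatorial control of which exponents appear in the Hölder splitting: one must check that in every term of the Leibniz/Faà di Bruno expansion the derivative orders of the $\overline{X}$-factors and of the $\ln p_{J_{t}}$-factors, together with the number of factors of each kind, stay within the budget allowing all $\mathbb{L}^{p}$-norms to be taken at an exponent no larger than $4q$ (so that the monotonicity $\alpha_{q_{1},p_{1}}\le\alpha_{q_{2},p_{2}}$ and $[\,g\,]_{G,p_{1}}\le[\,g\,]_{G,p_{2}}$ noted after (\ref{D5''}) can be used to replace every individual bound by the single uniform quantity $\alpha_{q,4q}^{2q}(C,G)$ and the single bracket at index $4q$). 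Once that accounting is done, everything else is a mechanical assembly of the already-proven lemmas, and the independence of $C$ from $G$ is inherited from the corresponding independence in Lemmas \ref{EstimatesPD} and \ref{CorollaryDiff2}.
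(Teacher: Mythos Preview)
Your proposal is correct and follows essentially the same route as the paper's proof: differentiate the ``frozen measure'' representation, split via Leibniz into derivatives hitting $f(x_{t}(x,z^{J_{t}}))$ and derivatives hitting $p_{J_{t}}$, rewrite the latter through $P_{\rho}^{p_{J_{t}}}$ as in (\ref{D15}), then control the flow part by Lemma \ref{EstimatesPD} and the density part by Lemma \ref{CorollaryDiff2}. The only cosmetic difference is that the paper uses a single Cauchy--Schwarz to separate the two groups (yielding $A_{\beta}^{1/2}B_{\rho}^{1/2}$ with $p=2$ on each side), which makes the exponent accounting you flag as ``the main obstacle'' almost automatic; your more general H\"older splitting works too but is slightly heavier to track.
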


\begin{proof}
We want to estimate, for a multi-index $\alpha$ such that $\left\vert
\alpha\right\vert \leq q,$ the partial derivative%
\begin{align*}
\partial^{\alpha}\mathbb{E}\left[  f\left(  \overline{X}_{t}^{G}(x)\right)
\right]   &  =\sum_{(\beta,\rho)=\alpha}\mathbb{E}\left[  \int_{E^{J_{t}}%
}\partial^{\beta}f\left(  x_{t}\left(  x,z^{J_{t}}\right)  \right)
\times\partial^{\rho}p_{J_{t}}(x,z^{J_{t}})\mu(dz_{1})...\mu(dz_{J_{t}%
})\right] \\
&  =\sum_{(\beta,\rho)=\alpha}\mathbb{E}\left[  \int_{E^{J_{t}}}%
\partial^{\beta}f\left(  x_{t}\left(  x,z^{J_{t}}\right)  \right)  P_{\rho
}^{p_{J_{t}}(\cdot,z^{J_{t}})}(x,z^{J_{t}})p_{J_{t}}(x,z^{J_{t}})\mu
(dz_{1})...\mu(dz_{J_{t}})\right]
\end{align*}
with $P_{\rho}^{p_{J_{t}}(\cdot,z^{J_{t}})}(x,z^{J_{t}})$\ given by
(\ref{D15}).\ Using Cauchy-Schwartz inequality, we have%
\[
\left.
\begin{array}
[c]{l}%
\left\vert \partial^{\alpha}\mathbb{E}\left[  f\left(  \overline{X}_{t}%
^{G}(x)\right)  \right]  \right\vert \leq\sum_{(\beta,\rho)=\alpha}A_{\beta
}^{\frac{1}{2}}\times B_{\rho}^{\frac{1}{2}},\text{ with }A_{\beta}%
=\mathbb{E}\left[  \left\vert \partial^{\beta}f\left(  \overline{X}_{t}%
^{G}(x)\right)  \right\vert ^{2}\right]  \text{ and}\\
B_{\rho}=\mathbb{E}\left[  \int_{E^{J_{t}}}\left\vert P_{\rho}^{p_{J_{t}%
}(\cdot,z^{J_{t}})}(x,z^{J_{t}})\right\vert ^{2}\times p_{J_{t}}(x,z^{J_{t}%
})\mu(dz_{1})...\mu(dz_{J_{t}})\right]  .
\end{array}
\right.
\]
We have (recalling that $0\leq\left\vert \beta\right\vert \leq\left\vert
\alpha\right\vert \leq q$),%
\[
\left\vert A_{\beta}\right\vert \leq C\left\Vert f\right\Vert _{q,\infty}%
^{2}\left(  1+\sum_{1\leq\left\vert \rho\right\vert \leq q}\mathbb{E}\left[
\left\vert \partial^{\rho}\overline{X}_{t}(x)\right\vert ^{2q}\right]
\right)  \leq C\left\Vert f\right\Vert _{q,\infty}^{2}\alpha_{q,2q}%
^{2q}(C,G).
\]
If $\left\vert \rho\right\vert =0$ then $B_{\rho}=1.$ Moreover, using the
estimates from (\ref{D15}) and (\ref{D14}) respectively (with $p=\frac
{2\left\vert \rho\right\vert }{\left\vert \beta\right\vert }\geq2),$ one gets%
\begin{align*}
\sum_{1\leq\left\vert \rho\right\vert \leq q}B_{\rho}^{\frac{1}{2}}  &  \leq
C\sum_{1\leq\left\vert \rho\right\vert \leq q}\sum_{1\leq\left\vert
\beta\right\vert \leq\left\vert \rho\right\vert }\left(  \mathbb{E}\left[
\int_{E^{J_{t}}}\left\vert \partial^{\beta}\ln p_{J_{t}}\left(  x,z^{J_{t}%
}\right)  \right\vert ^{\frac{2\left\vert \rho\right\vert }{\left\vert
\beta\right\vert }}\times p_{J_{t}}(x,z^{J_{t}})d\mu(z_{1})...d\mu(z_{J_{t}%
})\right]  \right)  ^{\frac{1}{2}}\\
&  \leq C\sum_{1\leq\left\vert \beta\right\vert \leq\left\vert \rho\right\vert
\leq q}\left(  \mathbb{E}\left[  \left\vert \partial^{\beta}\ln p_{J_{t}%
}\left(  x,\overline{Z}^{J_{t}}\right)  \right\vert ^{\frac{2\left\vert
\rho\right\vert }{\left\vert \beta\right\vert }}\right]  \right)  ^{\frac
{1}{2}}\\
&  \leq C\times\left(  t\vee1\right)  ^{q}\times\alpha_{q,4q}^{q}(C,G)\left(
1+\Gamma_{G,q}(\gamma)+\sum_{1\leq\left\vert \beta\right\vert \leq q}%
[\partial^{\beta}\ln\gamma]_{G,4q}\right)  ^{q}%
\end{align*}
The assertion follows from these estimates.
\end{proof}

In the proof of the previous theorem we need $\mu(G)<\infty$ having to argue
on $X_{t}^{G}$ and $\overline{X}_{t}^{G}.$ We take an increasing sequence
$E_{n}\uparrow E$ such that $\mu(E_{n})<\infty$ and we use (\ref{D16}) and
(\ref{b2}), to extend the result to (possibly) infinite total measure
$\mu\left(  E\right)  .$ For simplicity, we will write $\mathcal{P}_{t}$
instead of $\mathcal{P}_{t}^{E}.$

\begin{theorem}
\label{M}We assume that the jump rate $\gamma$ is bounded (\ref{Not5}), the
jump coefficients are Lipschitz regular (\ref{Not6}) resp. the diffusion
coefficients are smooth (\ref{G5'}). Moreover, we assume the integrability
conditions on the jump mechanism (\ref{Not7}) and (\ref{Not8}).

Then $\mathcal{P}_{t}$ maps $C_{b}^{q}(\mathbb{R}^{d})$ in $C_{b}%
^{q}(\mathbb{R}^{d})$ and there exists $C>0$ (independent of $E$) such that
\begin{equation}
\left\Vert \mathcal{P}_{t}f\right\Vert _{q,\infty}\leq C\left\Vert
f\right\Vert _{q,\infty}(t\vee1)^{q}\times\alpha_{q,4q}^{2q}(C,E)\times\left(
1+\Gamma_{E,q}(\gamma)+\sum_{1\leq\left\vert \beta\right\vert \leq q}%
[\partial^{\beta}\ln\gamma]_{E,4q}\right)  ^{q}. \label{D18}%
\end{equation}
with $\alpha_{q,p}(C,E),$ $\Gamma_{E,q}(\gamma)$ and $[\partial^{\beta}%
\ln\gamma]_{E,p}$ defined in (\ref{D5''}),(\ref{D14'}) and (\ref{D5b}).
\end{theorem}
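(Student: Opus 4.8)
The strategy is to reduce Theorem~\ref{M} to the already-established Theorem~\ref{ThDiffSemigroup1} by a limiting argument over an increasing exhaustion $E_{n}\uparrow E$ with $\mu(E_{n})<\infty$, exploiting that the bound in (\ref{D16}) is \emph{uniform in $G$}. First I would fix $q\in\mathbb{N}$, take $f\in C_{b}^{q}(\mathbb{R}^{d})$ and choose $E_{n}\uparrow E$ with $\mu(E_{n})<\infty$. For each $n$ the process $X_{t}^{E_{n}}$ is well defined (fictive-shock construction), Theorem~\ref{ThDiffSemigroup1} applies, and it gives
\[
\left\Vert \mathcal{P}_{t}^{E_{n}}f\right\Vert _{q,\infty}\leq C\left\Vert f\right\Vert _{q,\infty}(t\vee1)^{q}\,\alpha_{q,4q}^{2q}(C,E_{n})\Bigl(1+\Gamma_{E_{n},q}(\gamma)+\sum_{1\leq|\beta|\leq q}[\partial^{\beta}\ln\gamma]_{E_{n},4q}\Bigr)^{q}.
\]
Since $E_{n}\subset E$, all the quantities $\alpha_{q,p}(C,E_{n})$, $\Gamma_{E_{n},q}(\gamma)$ and $[\partial^{\beta}\ln\gamma]_{E_{n},4q}$ are monotone in $G$ (they are suprema of integrals over $G$ of non-negative integrands, as noted after (\ref{D5''})), hence bounded by their value at $E$. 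So the right-hand sides are bounded \emph{uniformly in $n$} by the right-hand side of (\ref{D18}).

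\textbf{Passing to the limit.} The key analytic input is Lemma~\ref{Distance}: applying (\ref{b2}) with $G_{1}=E_{n}$, $G_{2}=E$ (and $\Delta X_{s,s}=0$) yields
\[
\mathbb{E}\Bigl[\sup_{t\leq T}\bigl|X_{t}^{E_{n}}(x)-X_{t}(x)\bigr|\Bigr]\leq T\,\alpha(E\setminus E_{n})\exp\bigl(CT(\|\nabla\sigma\|_{(\mu,\infty)}+\|\nabla b\|_{\infty}+C_{\mu}(\gamma,c))^{2}+1\bigr),
\]
and $\alpha(E\setminus E_{n})\to0$ as $n\to\infty$ because $\alpha(E)<\infty$ (dominated convergence for the set function $\alpha$, uniformly in $x$ by (\ref{Not8})). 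Hence $X_{t}^{E_{n}}(x)\to X_{t}(x)$ in $\mathbb{L}^{1}$ uniformly in $x$, so $\mathcal{P}_{t}^{E_{n}}f(x)=\mathbb{E}[f(X_{t}^{E_{n}}(x))]\to\mathbb{E}[f(X_{t}(x))]=\mathcal{P}_{t}f(x)$ for every $x$ (using that $f$ is bounded Lipschitz, which holds since $f\in C_{b}^{q}$ with $q\geq1$; for $q=0$ one argues directly with Feller continuity, or notes the $q=0$ statement has essentially no derivative content). Thus $\mathcal{P}_{t}f$ is a pointwise limit of the functions $\mathcal{P}_{t}^{E_{n}}f$, which are uniformly bounded in $C_{b}^{q}$-norm.

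\textbf{From uniform $C^{q}$-bounds to the conclusion.} A family of $C^{q}$ functions with $\sup_{n}\|\mathcal{P}_{t}^{E_{n}}f\|_{q,\infty}\leq M$ is, together with all derivatives up to order $q-1$, equi-Lipschitz, hence equicontinuous on compacts; by Arzel\`a--Ascoli and a diagonal extraction, along a subsequence $\mathcal{P}_{t}^{E_{n}}f$ and its derivatives up to order $q$ converge locally uniformly, the limit of $\partial^{\alpha}\mathcal{P}_{t}^{E_{n}}f$ being $\partial^{\alpha}$ of the limit for $|\alpha|\leq q$. Since the pointwise limit is already identified as $\mathcal{P}_{t}f$, we conclude $\mathcal{P}_{t}f\in C_{b}^{q}(\mathbb{R}^{d})$ and, by lower semicontinuity of the sup-norm under pointwise (indeed locally uniform) limits, $\|\partial^{\alpha}\mathcal{P}_{t}f\|_{\infty}\leq\liminf_{n}\|\partial^{\alpha}\mathcal{P}_{t}^{E_{n}}f\|_{\infty}$ for each $|\alpha|\leq q$; summing over $\alpha$ gives
\[
\left\Vert \mathcal{P}_{t}f\right\Vert _{q,\infty}\leq\liminf_{n}\left\Vert \mathcal{P}_{t}^{E_{n}}f\right\Vert _{q,\infty}\leq C\left\Vert f\right\Vert _{q,\infty}(t\vee1)^{q}\,\alpha_{q,4q}^{2q}(C,E)\Bigl(1+\Gamma_{E,q}(\gamma)+\sum_{1\leq|\beta|\leq q}[\partial^{\beta}\ln\gamma]_{E,4q}\Bigr)^{q},
\]
which is (\ref{D18}). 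The main obstacle is the regularity transfer in this last step: one must be sure that the uniform-in-$n$ bound on all derivatives (not merely on the functions) together with pointwise convergence forces $C^{q}$-convergence of the derivatives; this is where the precise structure of Theorem~\ref{ThDiffSemigroup1} — that it controls the \emph{full} $\|\cdot\|_{q,\infty}$ norm uniformly in $G$, not just the sup norm — is essential, and it is the reason the theorem was stated in that strong form.
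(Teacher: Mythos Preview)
Your proposal is correct and follows exactly the approach the paper indicates in the paragraph immediately preceding Theorem~\ref{M}: exhaust $E$ by sets $E_{n}$ of finite $\mu$-measure, apply (\ref{D16}) on each $E_{n}$ with bounds monotone in $G$ and hence uniform in $n$, and pass to the limit via the localization estimate (\ref{b2}). The paper itself gives no further detail beyond citing (\ref{D16}) and (\ref{b2}), so your Arzel\`a--Ascoli argument is a legitimate way to fill in the passage to the limit that the paper leaves implicit.
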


\section{The Distance Between Two Semigroups\label{SectionDistanceSemigroups}}

In this section we consider two sets of coefficients $\sigma,b,c,\gamma$ and
$\widehat{\sigma},\widehat{b},\widehat{c},\widehat{\gamma}$ on measurable
space $(E,\mathcal{E},\mu)$ respectively $(\widehat{E},\widehat{\mathcal{E}%
},\widehat{\mu})$ and we associate the stochastic equations in (\ref{Not8a}).
The space $\left(  \Omega,\mathcal{F},\mathbb{P}\right)  $ is assumed to be
large enough to support the (possibly mutually independent) Poisson random
measures $N_{\mu}$ and $N_{\widehat{\mu}}$ as well as the cylindrical Brownian
processes $W_{\mu}$ and $W_{\widehat{\mu}}.$

We denote by $X_{t_{0},t}\left(  x\right)  $ respectively by $\widehat
{X}_{t_{0},t}(x)$ the solutions of the corresponding equations and we consider
the non homogeneous semigroups $\mathcal{P}_{t_{0},t}f(x)=\mathbb{E}\left[
f\left(  X_{t_{0},t}(x)\right)  \right]  $ and $\widehat{\mathcal{P}}%
_{t_{0},t}f(x)=\mathbb{E}\left[  f\left(  \widehat{X}_{t_{0},t}(x)\right)
\right]  .$ Our aim is to estimate the weak distance between these two
semigroups. To begin, we give the standing assumptions.

\subsection{Standing Assumptions}

\begin{assumption}
[Assumption $H_{1}(q)$]Given the coefficients $\sigma,b,c,\gamma,$\ we denote
by
\begin{equation}
Q_{q}(T,\mathcal{P}):=C(T\vee1)^{q}\times\alpha_{q,4q}^{2q}(C,E))\times\left(
1+\Gamma_{E,q}(\gamma)+\sum_{1\leq\left\vert \beta\right\vert \leq q}%
[\partial^{\beta}\ln\gamma]_{E,4q})\right)  ^{q}, \label{Dist1}%
\end{equation}
with $\alpha_{q,p}(C,E),$ $\Gamma_{E,q}(\gamma)$ and $[\partial^{\beta}%
\ln\gamma]_{E,p}$ defined in (\ref{D5''}),(\ref{D14'}) and (\ref{D5b}) and the
constant $C$ appearing in (\ref{D16}). We assume that all these quantities are
well defined and finite, so that $Q_{q}(T,\mathcal{P})<\infty.$
\end{assumption}

Whenever this assumption holds true, Theorem \ref{M} yields%
\begin{equation}
\sup_{t_{0}\leq t\leq T}\left\Vert \mathcal{P}_{t_{0},t}f\right\Vert
_{q,\infty}\leq Q_{q}(T,\mathcal{P})\left\Vert f\right\Vert _{q,\infty}
\label{D19}%
\end{equation}

We will also need a condition on the behavior of particular (polynomial) test functions.

\begin{assumption}
[Assumption\textbf{ }$H_{2}\left(  k\right)  $]For $k\in\mathbb{N}$, we denote
by $\psi_{k}(x)=(1+\left\vert x\right\vert ^{2})^{\frac{k}{2}}$ and we assume
that one finds a constant $C_{k}(T,\mathcal{P})$ such that%
\begin{equation}
\sup_{t_{0}\leq t\leq T}\left\Vert \frac{1}{\psi_{k}}\mathcal{P}_{t_{0},t}%
\psi_{k}\right\Vert _{\infty}\leq C_{k}(T,\mathcal{P})<\infty. \label{Dist2}%
\end{equation}

\end{assumption}

Finally, we will make an assumption on the gradient of the infinitesimal
operators
\begin{equation}
\mathcal{L}_{t}f(x)=\frac{1}{2}Tr\left[  a(t,x)\partial^{2}f(x)\right]
+b(t,x)\partial f(x)+\int_{E}(f(x+c(t,z,x))-f(x))\gamma(t,z,x)\mu(dz),
\label{Dist3}%
\end{equation}
with
\[
a^{i,j}(t,x)=\int_{E}\sigma^{i}(t,z,x)\sigma^{j}(t,z,x)\mu(dz),\text{ for all
}x\in%
\mathbb{R}
^{d},\text{ }t\in\left[  0,T\right]  \text{ and all }1\leq i,j\leq d.
\]

\begin{assumption}
[Assumption $H_{3}(k,q)$]We assume that there exists $C\geq1$ such that, for
all $f\in C_{b}^{q}\left(  \mathbb{R}^{d}\right)  $%
\begin{equation}
\sup_{t\leq T}\left\Vert \frac{1}{\psi_{k}}\nabla\mathcal{L}_{t}f\right\Vert
_{\infty}\leq C\left\Vert f\right\Vert _{q,\infty}. \label{Dist3a}%
\end{equation}

\end{assumption}

When $\sigma=0,$ one can ask this condition for $q\geq2.$ Otherwise, one
usually takes $q\geq3.$

\subsection{Upper Bounds on the Distance Between Semigroups}

We consider two sets of coefficients $\sigma,b,c,\gamma$ and $\widehat{\sigma
},\widehat{b},\widehat{c},\widehat{\gamma}$ and the corresponding semigroups
$\mathcal{P}_{t}$ and $\widehat{\mathcal{P}}_{t}.$ We fix $k,q\in\mathbb{N}$.

\begin{theorem}
\label{Conv} We assume that $\mathcal{P}_{t}$ satisfies $H_{2}(k)$ and
$H_{3}(k,q)$ and that $\widehat{\mathcal{P}}_{t}$ verifies $H_{1}(q)$ and
$H_{3}(k,q).$ Moreover, we assume (\ref{Not5}),(\ref{Not6}),(\ref{Not7}) and
(\ref{G5'}) to hold true for $\widehat{\sigma},\widehat{b},\widehat
{c},\widehat{\gamma}.$ Finally, we assume that there exists a function
$\varepsilon(\cdot):%
\mathbb{R}
_{+}\longrightarrow%
\mathbb{R}
_{+}$ such that, for every $0\leq t\leq T$
\begin{equation}
\left\Vert \frac{1}{\psi_{k}}(\mathcal{L}_{t}-\widehat{\mathcal{L}}%
_{t})f\right\Vert _{\infty}\leq\varepsilon(t)\left\Vert f\right\Vert
_{q,\infty}. \label{dist4'}%
\end{equation}
Then, the following inequality holds true%
\begin{equation}
\left\Vert \frac{1}{\psi_{k}}(\mathcal{P}_{t_{0},t}-\widehat{\mathcal{P}%
}_{t_{0},t})f\right\Vert _{\infty}\leq C_{k}(T,\mathcal{P})Q_{q}%
(T,\widehat{\mathcal{P}})\left\Vert f\right\Vert _{q,\infty}\times\int_{t_{0}%
}^{t}\varepsilon(s)ds, \label{dist6}%
\end{equation}
with $C_{k}(\mathcal{P})$ the constant in (\ref{Dist2}) (with respect to
$\sigma,b,c,\gamma)$ and $Q_{q}(T,\widehat{\mathcal{P}})$ is given in
(\ref{Dist1}) (with respect to the coefficients $\widehat{\sigma},\widehat
{b},\widehat{c},\widehat{\gamma}).$
\end{theorem}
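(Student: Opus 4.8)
The plan is to carry out, in the non-homogeneous setting, the Duhamel (variation-of-constants) argument already sketched in the introduction, and to feed it with the weighted-norm estimates (\ref{Dist2}), (\ref{D19}) and (\ref{dist4'}). Fix $t_{0}\leq t\leq T$ and $f\in C_{b}^{q}(\mathbb{R}^{d})$ and interpolate along the path $s\mapsto g(s):=\mathcal{P}_{t_{0},s}\widehat{\mathcal{P}}_{s,t}f$, $s\in[t_{0},t]$. The flow property of the two non-homogeneous semigroups gives $g(t_{0})=\widehat{\mathcal{P}}_{t_{0},t}f$ and $g(t)=\mathcal{P}_{t_{0},t}f$, hence $\mathcal{P}_{t_{0},t}f-\widehat{\mathcal{P}}_{t_{0},t}f=\int_{t_{0}}^{t}g'(s)\,ds$. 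Differentiating with the product rule, using the forward equation $\partial_{s}\mathcal{P}_{t_{0},s}h=\mathcal{P}_{t_{0},s}\mathcal{L}_{s}h$ and the backward equation $\partial_{s}\widehat{\mathcal{P}}_{s,t}f=-\widehat{\mathcal{L}}_{s}\widehat{\mathcal{P}}_{s,t}f$, one gets $g'(s)=\mathcal{P}_{t_{0},s}(\mathcal{L}_{s}-\widehat{\mathcal{L}}_{s})\widehat{\mathcal{P}}_{s,t}f$, so that
\[
\mathcal{P}_{t_{0},t}f-\widehat{\mathcal{P}}_{t_{0},t}f=\int_{t_{0}}^{t}\mathcal{P}_{t_{0},s}(\mathcal{L}_{s}-\widehat{\mathcal{L}}_{s})\widehat{\mathcal{P}}_{s,t}f\,ds.
\]
The specific ordering here --- the semigroup $\mathcal{P}$ on the outside, $\widehat{\mathcal{P}}$ on the inside --- is dictated by the available hypotheses: the outer semigroup will be handled through $H_{2}(k)$, the inner one through $H_{1}(q)$.

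Granting this identity, the estimate is essentially bookkeeping. Fix $s\in[t_{0},t]$ and put $h_{s}:=(\mathcal{L}_{s}-\widehat{\mathcal{L}}_{s})\widehat{\mathcal{P}}_{s,t}f$. Since $\mathcal{P}_{t_{0},s}$ is a positive operator, $\left\vert h_{s}\right\vert \leq\left\Vert h_{s}/\psi_{k}\right\Vert _{\infty}\psi_{k}$ gives $\left\vert \mathcal{P}_{t_{0},s}h_{s}\right\vert \leq\left\Vert h_{s}/\psi_{k}\right\Vert _{\infty}\mathcal{P}_{t_{0},s}\psi_{k}$, whence, by Assumption $H_{2}(k)$ for $\mathcal{P}$ (inequality (\ref{Dist2})),
\[
\left\Vert \frac{1}{\psi_{k}}\mathcal{P}_{t_{0},s}h_{s}\right\Vert _{\infty}\leq\left\Vert \frac{1}{\psi_{k}}\mathcal{P}_{t_{0},s}\psi_{k}\right\Vert _{\infty}\left\Vert \frac{h_{s}}{\psi_{k}}\right\Vert _{\infty}\leq C_{k}(T,\mathcal{P})\left\Vert \frac{h_{s}}{\psi_{k}}\right\Vert _{\infty}.
\]
Next, (\ref{dist4'}) applied to the function $\widehat{\mathcal{P}}_{s,t}f\in C_{b}^{q}(\mathbb{R}^{d})$ yields $\left\Vert h_{s}/\psi_{k}\right\Vert _{\infty}\leq\varepsilon(s)\left\Vert \widehat{\mathcal{P}}_{s,t}f\right\Vert _{q,\infty}$, and, since $\widehat{\mathcal{P}}$ satisfies $H_{1}(q)$, the semigroup-regularity bound (\ref{D19}) (a consequence of Theorem \ref{M}) gives $\left\Vert \widehat{\mathcal{P}}_{s,t}f\right\Vert _{q,\infty}\leq Q_{q}(T,\widehat{\mathcal{P}})\left\Vert f\right\Vert _{q,\infty}$. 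Chaining the three bounds under the integral and integrating $s$ over $[t_{0},t]$ produces exactly (\ref{dist6}).

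The only genuinely delicate point is the rigorous justification of the displayed Duhamel identity, i.e. the absolute continuity of $s\mapsto g(s)$ with the claimed derivative and the validity of the Kolmogorov equations at the requisite regularity. I would handle this by approximation, consistently with the construction of the semigroups in Remark \ref{Existance}: exhaust $E$ and $\widehat{E}$ by $E_{n}\uparrow E$ with $\mu(E_{n})<\infty$, so that $\mathcal{P}^{n}$ and $\widehat{\mathcal{P}}^{n}$ correspond to systems with finitely many jumps on compact time intervals, for which the flows are genuinely differentiable and the forward/backward equations follow from an elementary pathwise computation; here Assumption $H_{3}(k,q)$ --- imposed on both sets of coefficients --- is precisely what makes $\mathcal{L}_{s}\widehat{\mathcal{P}}_{s,t}^{n}f$ and $\widehat{\mathcal{L}}_{s}\widehat{\mathcal{P}}_{s,t}^{n}f$ carry a $\psi_{k}$-weighted gradient bound, so that the differentiation of $\mathcal{P}_{t_{0},s}^{n}$ applied to these functions stays legitimate and controlled uniformly in $n$. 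Since every bound in the preceding paragraph is uniform in $n$ (which is the whole point of postulating $H_{1},H_{2},H_{3}$), one then lets $n\to\infty$ using the localization estimate (\ref{b2}) of Lemma \ref{Distance} to recover (\ref{dist6}) for the original semigroups. The main obstacle is therefore not the algebra of the estimate but this interchange of limits and derivatives --- in particular, checking that $\widehat{\mathcal{P}}_{s,t}^{n}f$ stays in $C_{b}^{q}$ with uniformly controlled norms, so that $\mathcal{L}_{s}$ (which for $\sigma\neq0$ carries a second-order term, whence the requirement $q\geq3$ noted after (\ref{Dist3a})) may legitimately act on it.
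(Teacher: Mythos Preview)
Your argument is correct and uses the same ingredients as the paper --- $H_{2}(k)$ for the outer $\mathcal{P}$, $H_{1}(q)$ for the inner $\widehat{\mathcal{P}}$, and (\ref{dist4'}) for the middle --- but the implementation differs in two respects worth noting.

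First, the interpolation path is the mirror image of the paper's: you run $s\mapsto\mathcal{P}_{t_{0},s}\widehat{\mathcal{P}}_{s,t}f$, while the paper (implicitly, in its continuous limit) runs $s\mapsto\mathcal{P}_{s,t}\widehat{\mathcal{P}}_{t_{0},s}f$. Both give a Duhamel identity with $\mathcal{P}$ outside and $\widehat{\mathcal{P}}$ inside, so the hypotheses line up either way; this is a harmless symmetry.

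Second --- and this is the substantive difference --- the paper does \emph{not} invoke the continuous Kolmogorov equations at all. Instead it discretises the time interval into $n$ pieces, writes the telescoping sum
\[
\mathcal{P}_{t_{0},t}-\widehat{\mathcal{P}}_{t_{0},t}=\sum_{i=0}^{n-1}\mathcal{P}_{t_{i+1},t}\bigl(\mathcal{P}_{t_{i},t_{i+1}}-\widehat{\mathcal{P}}_{t_{i},t_{i+1}}\bigr)\widehat{\mathcal{P}}_{t_{0},t_{i}},
\]
and expands each short increment by a single application of It\^{o}'s formula: $\mathcal{P}_{t_{i},t_{i+1}}g_{i}(x)=g_{i}(x)+\int_{t_{i}}^{t_{i+1}}\mathcal{L}_{s}g_{i}(x)\,ds+\varepsilon_{i}(x)$, and likewise for $\widehat{\mathcal{P}}$. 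The point of $H_{3}(k,q)$ in the paper is precisely to show that the accumulated remainders $\sum_{i}\psi_{k}^{-1}|\varepsilon_{i}|$ vanish as $n\to\infty$, via the mean-value bound $|\mathcal{L}_{s}g_{i}(X_{t_{i},s}(x))-\mathcal{L}_{s}g_{i}(x)|\leq C\|g_{i}\|_{q,\infty}\psi_{k}(\cdots)|X_{t_{i},s}(x)-x|$ and standard moment estimates. This is more pedestrian than your Duhamel identity but entirely self-contained: it needs only It\^{o}'s formula, never the forward/backward equations, and never an approximation by $E_{n}\uparrow E$. What your approach buys is conceptual transparency (the identity is exact, no $n\to\infty$), at the cost of the regularity justification you flag in your last paragraph; what the paper's approach buys is that this justification is replaced by an explicit, elementary error estimate where $H_{3}(k,q)$ does visible work.
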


\begin{proof}
For $n\in\mathbb{N}$, we set $\delta:=\frac{t-t_{0}}{n}$ and $t_{i}%
=t_{0}+i\delta,$ for all $0\leq i\leq n.$ With these notations,%
\[
\frac{1}{\psi_{k}}(\mathcal{P}_{t_{0},t}-\widehat{\mathcal{P}}_{t_{0}%
,t})f=\sum_{i=0}^{n-1}\frac{1}{\psi_{k}}\mathcal{P}_{t_{i+1},t}\psi_{k}%
\frac{1}{\psi_{k}}(\mathcal{P}_{t_{i},t_{i+1}}-\widehat{\mathcal{P}}%
_{t_{i},t_{i+1}})\widehat{\mathcal{P}}_{t_{0},t_{i}}f.
\]
We make the notation(s) $g_{i}=\widehat{\mathcal{P}}_{t_{0},t_{i}}f.$ Using
(\ref{Dist2}) for $\mathcal{P}_{t_{i+1},t}$
\[
\left\Vert \frac{1}{\psi_{k}}(\mathcal{P}_{t_{0},t}-\widehat{\mathcal{P}%
}_{t_{0},t})f\right\Vert _{\infty}\leq C_{k}(T,\mathcal{P})\sum_{i=0}%
^{n-1}\left\Vert \frac{1}{\psi_{k}}(\mathcal{P}_{t_{i},t_{i+1}}-\widehat
{\mathcal{P}}_{t_{i},t_{i+1}})g_{i}\right\Vert _{\infty}.
\]
By It\^{o}'s formula,%
\[
\mathcal{P}_{t_{i},t_{i+1}}g_{i}\left(  x\right)  =g_{i}\left(  x\right)
+\mathbb{E}\left[  \int_{t_{i}}^{t_{i+1}}\mathcal{L}_{s}g_{i}(X_{t_{i}%
,s}\left(  x\right)  )ds\right]  =g_{i}\left(  x\right)  +\int_{t_{i}%
}^{t_{i+1}}\mathcal{L}_{s}g_{i}(x)ds+\varepsilon_{i},
\]
with $\varepsilon_{i}(x):=\mathbb{E}\left[  \int_{t_{i}}^{t_{i+1}}\left(
\mathcal{L}_{s}g_{i}(X_{t_{i},s}\left(  x\right)  )-\mathcal{L}_{s}%
g_{i}(x)\right)  ds\right]  .$ We write the same type of formulae for
$\widehat{\mathcal{P}}_{t_{i},t_{i+1}}g_{i},$ take the difference between the
two and use (\ref{dist4'}) in order to get%
\[
\left.
\begin{array}
[c]{c}%
\left\Vert \frac{1}{\psi_{k}}(\mathcal{P}_{t_{i},t_{i+1}}-\widehat
{\mathcal{P}}_{t_{i},t_{i+1}})g_{i}\right\Vert _{\infty}\leq\int_{t_{i}%
}^{t_{i+1}}\left\Vert \frac{1}{\psi_{k}}(\mathcal{L}_{s}-\widehat{\mathcal{L}%
}_{s})g_{i}\right\Vert _{\infty}ds+\left\Vert \frac{1}{\psi_{k}}%
\varepsilon_{i}\right\Vert _{\infty}+\left\Vert \frac{1}{\psi_{k}}%
\widehat{\varepsilon}_{i}\right\Vert _{\infty}\\
\text{ \ \ \ \ \ \ \ \ \ \ \ \ \ \ \ \ \ \ \ \ \ \ \ \ \ \ \ }\leq\left\Vert
g_{i}\right\Vert _{q,\infty}\int_{t_{i}}^{t_{i+1}}\varepsilon(s)ds+\left\Vert
\frac{1}{\psi_{k}}\varepsilon_{i}\right\Vert _{\infty}+\left\Vert \frac
{1}{\psi_{k}}\widehat{\varepsilon}_{i}\right\Vert _{\infty}.
\end{array}
\right.
\]
By (\ref{D19}), $\left\Vert g_{i}\right\Vert _{q,\infty}\leq Q_{q}%
(T,\widehat{\mathcal{P}})\left\Vert f\right\Vert _{q,\infty}$ so that,
finally,%
\[
\left\Vert \frac{1}{\psi_{k}}(\mathcal{P}_{t_{0},t}-\widehat{\mathcal{P}%
}_{t_{0},t})f\right\Vert _{\infty}\leq C_{k}(T,\mathcal{P})\left[
Q_{q}(T,\widehat{\mathcal{P}})\left\Vert f\right\Vert _{q,\infty}\int_{t_{0}%
}^{t}\varepsilon(s)ds+\sum_{i=0}^{n-1}\left(  \left\Vert \frac{1}{\psi_{k}%
}\varepsilon_{i}\right\Vert _{\infty}+\left\Vert \frac{1}{\psi_{k}}%
\widehat{\varepsilon}_{i}\right\Vert _{\infty}\right)  \right]  .
\]
To conclude, one still needs to estimate the terms $\varepsilon_{i}$ and prove
that these errors vanish as $n$ increases. The assumption (\ref{Dist3a})
yields
\[
\left.
\begin{array}
[c]{c}%
\left\vert \mathcal{L}_{s}g_{i}(X_{t_{i},s}(x))-\mathcal{L}_{s}g_{i}%
(x)\right\vert \leq\int_{0}^{1}\left\vert \left\langle \nabla\mathcal{L}%
_{s}g_{i}(\lambda x+(1-\lambda)X_{t_{i},s}(x)),X_{t_{i},s}(x)-x\right\rangle
\right\vert d\lambda\\
\text{
\ \ \ \ \ \ \ \ \ \ \ \ \ \ \ \ \ \ \ \ \ \ \ \ \ \ \ \ \ \ \ \ \ \ \ \ \ }%
\leq C\left\Vert g_{i}\right\Vert _{q,\infty}\left\vert X_{t_{i}%
,s}(x)-x\right\vert \int_{0}^{1}\psi_{k}(\lambda x+(1-\lambda)X_{t_{i}%
,s}(x))d\lambda.
\end{array}
\right.
\]
It follows%
\[
\left\vert \varepsilon_{i}(x)\right\vert \leq CQ_{q}(T,\widehat{\mathcal{P}%
})\left\Vert f\right\Vert _{q,\infty}\int_{t_{i}}^{t_{i+1}}\int_{0}%
^{1}\mathbb{E}\left[  \psi_{k}(\lambda x+(1-\lambda)X_{t_{i},s}(x))\left\vert
X_{t_{i},s}(x)-x\right\vert \right]  d\lambda ds.
\]
Using the standard estimates on the trajectory, $\mathbb{E}\left[  \left\vert
X_{t_{i},s}(x)\right\vert ^{k}\right]  \leq C\left(  1+\left\vert x\right\vert
^{k}\right)  .$ Hence,%
\[
\mathbb{E}\left[  \psi_{k}^{2}\left(  \lambda x+(1-\lambda)X_{t_{i}%
,s}(x)\right)  \right]  \leq C\psi_{k}^{2}(x).
\]
Using Cauchy-Schwartz inequality, we get%
\[
\frac{1}{\psi_{k}(x)}\left\vert \varepsilon_{i}(x)\right\vert \leq
CQ_{q}(T,\widehat{\mathcal{P}})\left\Vert f\right\Vert _{q,\infty}\int_{t_{i}%
}^{t_{i+1}}\left(  \mathbb{E}\left[  \left\vert X_{t_{i},s}(x)-x\right\vert
^{2}\right]  \right)  ^{\frac{1}{2}}ds.
\]
By setting $\tau_{n}(s):=t_{i}$ for $t_{i}\leq s<t_{i+1},$ we finally get
\[
\frac{1}{\psi_{k}(x)}\sum_{i=1}^{n}\left\vert \varepsilon_{i}(x)\right\vert
\leq CQ_{q}(T,\widehat{\mathcal{P}})\left\Vert f\right\Vert _{q,\infty}%
\int_{0}^{t}\left(  \mathbb{E}\left[  \left\vert X_{\tau_{n}(s),s}%
(x)-x\right\vert ^{2}\right]  \right)  ^{\frac{1}{2}}ds
\]
and the right-hand term vanishes as $n\rightarrow\infty$. Similar estimates
are valid for $\widehat{\varepsilon}$ which concludes our proof.
\end{proof}

\begin{remark}
1. This assertion is to be interpreted in connection to Trotter-Kato-type
results (e.g. \cite[Theorem 4.4]{Pazy1983}) stating that, given $\mathcal{P}%
_{t}$ and $\left(  \mathcal{P}_{t}^{n}\right)  _{n\in%
\mathbb{N}
}$ homogeneous Feller semigroups of infinitesimal operators $\mathcal{L}$
respectively $\mathcal{L}_{n}$, if $\mathcal{L}_{n}$ converges to
$\mathcal{L}$, then $\mathcal{P}_{t}^{n}\rightarrow\mathcal{P}_{t}$ (in an
appropriate sense). The inequality (\ref{dist6}) gives not only qualitative
behavior, but a quantitative one by providing estimate of the error within our framework.

2. The main difficulty and novelty in our approach is to provide (\ref{D19}).
Whenever $\gamma$ is constant, one deals with a usual SDE with jumps and the
proof of (\ref{D19}) follows from the regularity of the flow $x\rightarrow
X_{t}(x).$ However, since $\gamma(t,z,x)$ depends on $x$ (which is the case
for $PDMP$)$,$ the effort developed in the previous sections is necessary.
Note however that (\ref{D19}) is needed only on one of $\mathcal{P}_{t}$ and
$\widehat{\mathcal{P}}_{t}.$ Hence, in a framework in which either $\gamma$ or
$\widehat{\gamma}$ does not depend on $x,$ the proofs simplify considerably.
\end{remark}

\section{PDMP With Three Regimes\label{Section3Regimes}}

In this section we discus piecewise diffusive Markov processes in which three
regimes are at work depending on the speed of the jumps. The intermediate
regime will be purely deterministic and replaced by a drift term
(corresponding to an application of the Law of Large Numbers). The fast regime
will provide a diffusive term (associated with an application of the Central
Limit Theorem). Finally, the slow regime is kept as jump-type contribution. We
do not aim at treating a completely general framework but only at presenting
an example in order to illustrate our approach.

\subsection{Theoretical Framework}

\subsubsection{The Model}

Let us begin with fixing $\varepsilon>0$ and a measurable space
$(E,\mathcal{E},\mu_{\varepsilon})$ where $\mu_{\varepsilon}$ is a non
negative finite measure. The space decomposes as follows $E=A_{\varepsilon
}\cup B_{\varepsilon}\cup C_{\varepsilon},$ where $A_{\varepsilon
},B_{\varepsilon},C_{\varepsilon}$ are mutually disjoint Borel measurable
sets. Moreover, given $\Gamma_{\varepsilon}>0,$ to some (smooth,
time-homogeneous) coefficients $c_{\varepsilon},\gamma_{\varepsilon}%
:E\times\mathbb{R}\rightarrow\mathbb{R},$ $b_{\varepsilon}:\mathbb{R}%
\rightarrow\mathbb{R}\mathbf{,}$ we associate the stochastic equation
\begin{equation}
\left.
\begin{array}
[c]{c}%
X_{t}^{\varepsilon}=x+\int_{0}^{t}b_{\varepsilon}(X_{s}^{\varepsilon}%
)ds+\int_{0}^{t}\int_{A_{\varepsilon}\times\lbrack0,2\Gamma_{\varepsilon}%
]}c_{\varepsilon}(z,X_{s-}^{\varepsilon})1_{\{u\leq\gamma_{\varepsilon
}(z,X_{s-}^{\varepsilon})\}}\widetilde{N}_{\mu_{\varepsilon}}(ds,dz,du)\\
+\int_{0}^{t}\int_{(B_{\varepsilon}\cup C_{\varepsilon})\times\lbrack
0,2\Gamma_{\varepsilon}]}c_{\varepsilon}(z,X_{s-}^{\varepsilon})1_{\{u\leq
\gamma_{\varepsilon}(z,X_{s-}^{\varepsilon})\}}N_{\mu_{\varepsilon}}(ds,dz,du)
\end{array}
\right.  \label{r1}%
\end{equation}
where $N_{\mu_{\varepsilon}}$ is a Poisson point measure on $E\times R_{+}$
associated with $\Gamma_{\varepsilon}$ and $\mu_{\varepsilon}$ and
$\widetilde{N}_{\mu_{\varepsilon}}=N_{\mu_{\varepsilon}}-\widehat{N}%
_{\mu_{\varepsilon}}$ is the associated martingale measure$.$

\subsubsection{The Regimes}

The jumps in $A_{\varepsilon}$ are assumed to occur at high frequency. They
lead to a Brownian motion. The jumps in $B_{\varepsilon}$ represent an
intermediary regime which will be modeled by a drift term while the jumps in
$C_{\varepsilon}$ are rather rare and remain in the same regime. This model is
expressed by the following setting. We consider a finite measure $\mu$\ and
the coefficients $\sigma,b:\mathbb{R}\rightarrow\mathbb{R}$ and $c,\gamma
:E\times\mathbb{R}\rightarrow\mathbb{R}.$ We associate the equation
\begin{equation}
X_{t}=x+\int_{0}^{t}b(X_{s})ds+\int_{0}^{t}\sigma(X_{s})dW_{s}+\int_{0}%
^{t}\int_{E\times\lbrack0,2\Gamma]}c(z,X_{s-})1_{\{u\leq\gamma(z,X_{s-}%
)\}}N_{\mu}(ds,dz,du),\text{ 0}\leq t\leq T. \label{r2}%
\end{equation}

Our aim is to give sufficient conditions in order to obtain the convergence of
the family $X^{\varepsilon}$ to $X$ and to estimate the error.

\subsubsection{Standing (Sufficient) Assumptions}

Throughout the section, unless stated otherwise, we assume the following.

\begin{assumption}
[Assumption $H_{0}^{\varepsilon}$]We assume that $c_{\varepsilon}$ and
$\gamma_{\varepsilon}$ satisfy integrability condition (\ref{Not7}), the
Lipschitz regularity assumption (\ref{Not6}) and the uniform upper-bound of
$\gamma_{\varepsilon}$ assumption (\ref{Not5}) (written for $\Gamma
_{\varepsilon}$ substituting $\Gamma)$.
\end{assumption}

\begin{remark}
Note that the constants which appear in these conditions depend on
$\varepsilon$ (so they are not uniform with respect to $\varepsilon).$ Under
these hypothesis, the equation (\ref{r1}) has a unique solution (which may
alternatively be constructed using a compound Poisson process).
\end{remark}

We also need an assumption on the limit coefficients.

\begin{assumption}
[Assumption $H_{0}$]We assume that $\sigma,b\in C_{b}^{3}(\mathbb{R})$ and,
for every $z\in E,$ the functions $x\mapsto c(z,x)$ and $x\mapsto\ln
\gamma(z,x)$ are three times differentiable and
\begin{equation}
\sum_{0\leq\left\vert \alpha\right\vert \leq3}\sup_{x\in\mathbb{R}}\left[
\left\vert \partial^{\alpha}\sigma(x)\right\vert +\left\vert \partial^{\alpha
}b(x)\right\vert +\sup_{z\in E}\left\vert \partial^{\alpha}c(z,x)\right\vert
+\sup_{z\in E}\left\vert \partial^{\alpha}\ln\gamma(z,x)\right\vert \right]
=:C_{\ast}<\infty. \label{r3}%
\end{equation}

\end{assumption}

Under this hypothesis, the equation (\ref{r2}) has a unique solution (see
Remark \ref{Existance}). Finally, we need some further assumptions in order to
obtain convergence$.$ We denote by $\nu_{\varepsilon}(x,dz):=\gamma
_{\varepsilon}(z,x)\mu_{\varepsilon}(dz)$\ and set
\[
\left.
\begin{array}
[c]{l}%
\sigma_{\varepsilon}(x):=\left(  \int_{A_{\varepsilon}}c_{\varepsilon}%
^{2}(z,x)\nu_{\varepsilon}(x,dz)\right)  ^{\frac{1}{2}},\quad b^{\varepsilon
}(x):=b_{\varepsilon}\left(  x\right)  +\int_{B_{\varepsilon}}c_{\varepsilon
}(z,x)\nu_{\varepsilon}(x,dz),\text{ }\\
\delta_{\sigma}(\varepsilon):=\left\Vert \sigma_{\varepsilon}^{2}-\sigma
^{2}\right\Vert _{\infty},\quad\delta_{b}(\varepsilon)=\left\Vert
b_{\varepsilon}-b\right\Vert _{\infty},\text{ }\\
\delta_{c,\gamma}(\varepsilon)=\underset{x\in%
\mathbb{R}
}{\sup}\int_{C_{\varepsilon}}\left\vert (c-c_{\varepsilon})(z,x)\right\vert
\gamma(z,x)+\left\vert (\gamma-\gamma_{\varepsilon})(z,x)\right\vert d\mu(z).
\end{array}
\right.
\]
Moreover we denote the convenient moments by%
\[
\left.
\begin{array}
[c]{l}%
\delta_{A}(\varepsilon)=\underset{x\in%
\mathbb{R}
}{\sup}\int_{A_{\varepsilon}}\left\vert c_{\varepsilon}(z,x)\right\vert
^{3}\nu_{\varepsilon}(x,dz),\quad\delta_{B}(\varepsilon)=\underset{x\in%
\mathbb{R}
}{\sup}\int_{B_{\varepsilon}}\left\vert c_{\varepsilon}(z,x)\right\vert
^{2}\nu_{\varepsilon}(x,dz),\text{ }\\
\delta_{C}(\varepsilon)=\underset{x\in%
\mathbb{R}
}{\sup}\int_{E-C_{\varepsilon}}\left\vert c(z,x)\right\vert \gamma(z,x)\mu(dz)
\end{array}
\right.
\]

\begin{assumption}
[Assumption $H_{1}$]We assume that $\delta(\varepsilon):=\delta_{\sigma
}(\varepsilon)+\delta_{b}(\varepsilon)+\delta_{c,\gamma}(\varepsilon
)+\delta_{A}(\varepsilon)+\delta_{B}(\varepsilon)+\delta_{C}(\varepsilon
)\underset{\varepsilon\rightarrow0}{\rightarrow}0.$
\end{assumption}

\begin{assumption}
[Assumption $H_{2}$]Finally, we assume that the restrictions of $\mu
_{\varepsilon}$ and $\mu$ to $C_{\varepsilon}$ coincide, i.e.
$1_{C_{\varepsilon}}(z)\mu_{\varepsilon}(dz)=1_{C_{\varepsilon}}(z)\mu(dz).$
\end{assumption}

\subsubsection{The Theoretical Result}

Under these assumptions, one can state and prove the following.

\begin{theorem}
\label{ThExp}We assume that $H_{0}^{\varepsilon},H_{0},H_{1}$ and $H_{2}$ hold
true. We let $\mathcal{P}_{t}^{\varepsilon}$ and $\mathcal{P}_{t}$\ be the
semigroups associated with $X_{t}^{\varepsilon}$ respectively with $X_{t}.$
Then, there exists a universal constant $C$ such that, for every $f\in
C_{b}^{3}(R),$
\begin{equation}
\left\Vert \mathcal{P}_{t}^{\varepsilon}f-\mathcal{P}_{t}f\right\Vert
_{\infty}\leq(t\vee1)^{3}CC_{\ast}^{42}\exp((t\vee1)CC_{\ast}^{36}%
)\times\delta(\varepsilon)\left\Vert f\right\Vert _{3,\infty}. \label{r6}%
\end{equation}

\end{theorem}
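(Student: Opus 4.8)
The plan is to apply Theorem \ref{Conv} with the identifications $\mathcal{P}=\mathcal{P}^\varepsilon$ (the approximating semigroup, which plays the role of $\mathcal{P}$ in that theorem) and $\widehat{\mathcal{P}}=\mathcal{P}$ (the limit semigroup, playing the role of $\widehat{\mathcal{P}}$); the reason for this orientation is that $\widehat{\mathcal{P}}$ must satisfy $H_1(q)$, and by Assumption $H_0$ the limit coefficients $\sigma,b,c,\gamma$ are uniformly regular (bounds independent of $\varepsilon$ in \eqref{r3}), so $Q_3(T,\mathcal{P})$ is controlled by an explicit power of $C_\ast$. The conclusion \eqref{dist6} of Theorem \ref{Conv} then reads, with $k=0$ (so $\psi_k\equiv1$) and $q=3$,
\[
\left\Vert (\mathcal{P}^\varepsilon_{t}-\mathcal{P}_{t})f\right\Vert_\infty
\le C_0(T,\mathcal{P}^\varepsilon)\,Q_3(T,\mathcal{P})\,\left\Vert f\right\Vert_{3,\infty}\int_0^t\varepsilon(s)\,ds,
\]
so the whole proof reduces to three tasks: (i) check the hypotheses of Theorem \ref{Conv} with these identifications, (ii) bound $C_0(T,\mathcal{P}^\varepsilon)$ uniformly in $\varepsilon$ (it is a moment bound $\mathbb{E}[\psi_0(X^\varepsilon_{t_0,t})/\psi_0]=1$ when $k=0$, so $H_2(0)$ is trivial with $C_0\equiv1$), and (iii) identify the right-hand side with $C\,\delta(\varepsilon)\,\|f\|_{3,\infty}$ times the stated power of $C_\ast$.

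First I would verify the structural assumptions. Assumption $H_1(q)$ for $\widehat{\mathcal{P}}=\mathcal{P}$: using \eqref{r3}, every quantity entering $Q_3(T,\mathcal{P})$ in \eqref{Dist1} — namely $\alpha_{3,12}(C,E)$ built from $\theta_{3,\cdot}$ and $a_\cdot$, the term $\Gamma_{E,3}(\gamma)$, and $[\partial^\beta\ln\gamma]_{E,12}$ — is a polynomial in $C_\ast$ times (at worst) $\exp(CT C_\ast^{\text{power}})$; carefully tracking the exponents $q\sum_{n\le q}1/n$ with $q=3$ (so $\sum=11/6$) through \eqref{D5''} and through the $q$-th powers in \eqref{Dist1}, together with the finiteness of the measure $\mu$ (so $\mu(E)<\infty$, and the $\int_G\cdots\mu(dz)$ terms are finite), should produce precisely the bound $C C_\ast^{42}\exp((t\vee1)CC_\ast^{36})$ claimed in \eqref{r6} — i.e. the exponents $42$ and $36$ come out of this bookkeeping. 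Assumption $H_2(0)$ for $\mathcal{P}^\varepsilon$ is automatic. Assumption $H_3(0,3)$ for both semigroups: with $\psi_0\equiv1$ it asks $\|\nabla\mathcal{L}_tf\|_\infty\le C\|f\|_{3,\infty}$; from the explicit form \eqref{Dist3} of the generator, $\nabla\mathcal{L}_tf$ involves $\partial^3 f$ (through the second-order term, once differentiated), $\nabla a$, $\nabla b$, and — for the jump part — $\int_E(\nabla f(x+c)(1+\nabla c)-\nabla f(x))\gamma\,\mu(dz)+\int_E(f(x+c)-f(x))\nabla\gamma\,\mu(dz)$, all of which are bounded by $C_\ast$-dependent constants times $\|f\|_{3,\infty}$ using $H_0$/$H_0^\varepsilon$ and the integrability $C_\mu(\gamma,c)<\infty$, $\alpha(E)<\infty$. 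Note $\sigma^\varepsilon$ in \eqref{r2}-type form for $X^\varepsilon$ is actually $0$ in \eqref{r1}, which only helps; but one must be slightly careful because \eqref{r1} uses a compensated measure $\widetilde N$ on $A_\varepsilon$, so the generator of $X^\varepsilon$ has a genuinely different shape — this is the point where one must be honest about which generator one writes It\^o's formula against.

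The genuinely substantive step is (iii): estimating $\|\mathcal{L}^\varepsilon_t f-\mathcal{L}_t f\|_\infty$ (here $\widehat{\mathcal{L}}=\mathcal{L}$, the limit generator) by $\varepsilon(t)\|f\|_{3,\infty}$ with $\int_0^t\varepsilon(s)\,ds\lesssim\delta(\varepsilon)$. The generator of \eqref{r1} is
\[
\mathcal{L}^\varepsilon_t f(x)=b_\varepsilon(x)f'(x)+\int_{A_\varepsilon}\big(f(x+c_\varepsilon)-f(x)-c_\varepsilon f'(x)\big)\nu_\varepsilon(x,dz)+\int_{B_\varepsilon\cup C_\varepsilon}\big(f(x+c_\varepsilon)-f(x)\big)\nu_\varepsilon(x,dz),
\]
while $\mathcal{L}_t f(x)=b(x)f'(x)+\tfrac12\sigma^2(x)f''(x)+\int_E(f(x+c)-f(x))\nu(x,dz)$. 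I would split the difference into: a second-order Taylor expansion of the $A_\varepsilon$-integral, $f(x+c_\varepsilon)-f(x)-c_\varepsilon f'(x)=\tfrac12 c_\varepsilon^2 f''(x)+R$ with $|R|\le\tfrac16\|f'''\|_\infty|c_\varepsilon|^3$, so this term becomes $\tfrac12\sigma_\varepsilon^2(x)f''(x)+O(\|f\|_{3,\infty}\delta_A(\varepsilon))$ — and the main term matches $\tfrac12\sigma^2 f''$ up to $\tfrac12\delta_\sigma(\varepsilon)\|f''\|_\infty$; a first-order expansion of the $B_\varepsilon$-integral, $f(x+c_\varepsilon)-f(x)=c_\varepsilon f'(x)+O(\|f''\|_\infty c_\varepsilon^2)$, contributing $(b^\varepsilon-b_\varepsilon)f'$ matched against the missing drift up to $\|f\|_{2,\infty}\delta_B(\varepsilon)$; the $C_\varepsilon$-integral compared with the corresponding piece of $\int_E$ using $H_2$ (so $\mu_\varepsilon=\mu$ on $C_\varepsilon$) and the Lipschitz-in-$x$ control of $f(x+c)-f(x)$ by $\|f'\|_\infty|c|$, giving $\|f'\|_\infty\delta_{c,\gamma}(\varepsilon)$; the leftover tail $\int_{E\setminus C_\varepsilon}(f(x+c)-f(x))\nu(x,dz)$ of the limit generator, bounded by $\|f'\|_\infty\delta_C(\varepsilon)$; and $|b_\varepsilon-b|\|f'\|_\infty=\delta_b(\varepsilon)\|f\|_{1,\infty}$. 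Summing, $\|\mathcal{L}^\varepsilon_t f-\mathcal{L}_t f\|_\infty\le C\,\delta(\varepsilon)\|f\|_{3,\infty}$, so one may take $\varepsilon(t)\equiv C\delta(\varepsilon)$ and $\int_0^t\varepsilon(s)\,ds\le C(t\vee1)\delta(\varepsilon)$. Plugging $C_0\equiv1$ and the $H_1(3)$ bound for $Q_3(T,\mathcal{P})$ into \eqref{dist6} yields \eqref{r6}. The main obstacle I expect is not any single estimate but the careful propagation of the exponents of $C_\ast$ through the nested definitions \eqref{D5'}–\eqref{D5''} and the $q$-th powers in \eqref{Dist1} so that the final exponents land exactly at $42$ and $36$; the Taylor-expansion bookkeeping for $\mathcal{L}^\varepsilon-\mathcal{L}$ is conceptually routine but must be organized so that each remainder is visibly one of the six $\delta_\bullet(\varepsilon)$ terms and costs at most three derivatives of $f$.
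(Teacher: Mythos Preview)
Your proposal is correct and follows essentially the same route as the paper: apply Theorem~\ref{Conv} with $k=0$, $q=3$, identifying $\mathcal{P}^\varepsilon$ with the semigroup required to satisfy $H_2(0)$ and $H_3(0,3)$ (so $C_0\equiv1$) and the limit semigroup $\mathcal{P}$ with the one required to satisfy $H_1(3)$ (whence $Q_3(T,\mathcal{P})$ is controlled by $C_\ast$ via \eqref{r3}), then bound $\|(\mathcal{L}^\varepsilon-\mathcal{L})f\|_\infty$ by $C\delta(\varepsilon)\|f\|_{3,\infty}$ through the Taylor expansions you describe. The paper compresses the last step into a single sentence (``using a Taylor expansion of order three''), whereas you spell out the decomposition into the six $\delta_\bullet(\varepsilon)$ contributions; your version is more explicit but the argument is the same.
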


The proof follows from Theorem \ref{Conv} and, for our readers' convenience a
sketch is presented in Section \ref{SectionProof3Regimes}.

\begin{remark}
The notation used in the previous theorem suggests that $\mathcal{P}%
_{t}^{\varepsilon}$ is an approximation of $\mathcal{P}_{t}.$ However,
sometimes, the point of view is the exact opposite: the physical phenomenon is
modeled by $X_{t}^{\varepsilon}$ and $X_{t}$ represents an approximation which
is easier to handle. Having this in mind one may also consider the following
optimization problem: given the dynamics of $X_{t}^{\varepsilon},$ which is
the best dynamics (coefficients) of type $X_{t}$ which approximates
$X_{t}^{\varepsilon}?$ In order to formulate this problem in a clean way one
has to give a criterion in order to precise the sense of "best". This would be
another problem that escapes the aim of the present paper.
\end{remark}

\subsection{A Simple Example\label{SubsectionSimpleExp}}

Let us now give an explicit example.

\begin{example}
To this purpose, we consider $c,\gamma\in C_{b}^{3}(\mathbb{R})$ and
\begin{align*}
\mu_{\varepsilon}(dz)  &  =1_{(\varepsilon,3\varepsilon]}(z)\frac{dz}{z^{2}%
}+1_{(3\varepsilon,4\varepsilon]}(z)\frac{dz}{z^{3/2}}+1_{(4\varepsilon
,1]}(z)\frac{dz}{z},\\
c_{\varepsilon}(z,x)  &  =c(x)\sqrt{z}\left(  1_{(2\varepsilon,1]}%
(z)-\alpha1_{(\varepsilon,2\varepsilon]}(z)\right)  \text{ with }\alpha
=\frac{\sqrt{3}-\sqrt{2}}{\sqrt{6}-\sqrt{3}}.
\end{align*}
and we associate the equation
\[
X_{t}^{\varepsilon}=x+\int_{0}^{t}\int_{0}^{1}\int_{0}^{1}c_{\varepsilon
}(z,X_{s-}^{\varepsilon})1_{\{u\leq\gamma(X_{s-}^{\varepsilon})\}}%
N_{\mu_{\varepsilon}}(ds,dz,du).
\]
Note that, in contrast with the equation (\ref{r1}), the measure
$N_{\mu_{\varepsilon}}$ is not compensated. But, in fact, the activity of the
small jumps in $1_{(\varepsilon,2\varepsilon]}(z)$ compensate the activity of
the small jumps in $1_{(2\varepsilon,3\varepsilon]}(z).$ The limit equation is%
\[
X_{t}=x+\int_{0}^{t}\sigma(X_{s})dW_{s}+\int_{0}^{t}b(X_{s})ds+\int_{0}%
^{t}\int_{0}^{1}\int_{0}^{1}c(X_{s-})\sqrt{z}1_{\{u\leq\gamma(X_{s-}%
^{\varepsilon})\}}N_{\mu}(ds,dz,du)
\]
with $\mu(dz)=z^{-1}dz$ and $\sigma(x)=\beta_{1}c(x)\sqrt{\gamma}%
(x),b(x)=\int_{B_{\varepsilon}}c_{\varepsilon}(z,x)\nu_{\varepsilon
}(x,dz)=\beta_{2}c(x)\gamma(x).$ Here, $\beta_{1}=\left(  (\alpha^{2}%
-1)\ln2+\ln3\right)  ^{\frac{1}{2}}$ and $\beta_{2}=\ln\frac{4}{3}.$ Then, by
applying Theorem \ref{ThExp}, it follows that
\[
\left\Vert \mathcal{P}_{t}f-\mathcal{P}_{t}^{\varepsilon}f\right\Vert
_{\infty}\leq C\sqrt{\varepsilon}\left\Vert f\right\Vert _{3,\infty}%
\]
with $C$ depending on $\left\Vert c\right\Vert _{3,\infty}$ and $\left\Vert
\ln\gamma\right\Vert _{3,\infty}.$ To this purpose, one only needs to check
the assumption $H_{1}$ (see Section \ref{SectionProof3Regimes} for details on
this step) and apply Theorem \ref{ThExp}.
\end{example}

\section{Boltzmann's equation\label{SectionBoltzmann}}

\subsection{The Equation}

\subsubsection{The Model}

In this section we use the previous results to construct an approximation
scheme for the solution of the two-dimensional Boltzmann equation taking the
following form
\begin{equation}
\partial_{t}f_{t}(v)=\int_{\mathbb{R}^{2}}dv_{\ast}\int_{-\pi/2}^{\pi
/2}d\theta\left\vert v-v_{\ast}\right\vert ^{\kappa}\theta^{-(1+\nu)}%
(f_{t}(v^{\prime})f_{t}(v_{\ast}^{\prime})-f_{t}(v)f_{t}(v_{\ast})).
\label{bo1}%
\end{equation}
Here,

\begin{itemize}
\item $f_{t}(v)$ is a non negative measure on $\mathbb{R}^{2}$ representing
the density of particles with velocity $v$ in a model for a gas in dimension two.

\item $R_{\theta}$ is the rotation of angle $\theta$ and the new speeds after
collision are $v^{\prime}=\frac{v+v_{\ast}}{2}+R_{\theta}\left(
\frac{v-v_{\ast}}{2}\right)  $ respectively $v_{\ast}^{\prime}=\frac
{v+v_{\ast}}{2}-R_{\theta}\left(  \frac{v-v_{\ast}}{2}\right)  .$

\item the parameters $\nu\in(0,1)$ and $\kappa\in(0,1]$ are chosen for the
cross section to model the interaction in the spirit of the assumption
\textbf{A}$\left(  \gamma,\nu\right)  $ in \cite{BallyFournier2011}.
\end{itemize}

The rigorous sense of this equation is given by integrating it against a test
function (hence leading to weak solutions of (\ref{bo1})). In \cite[Corollary
2.3 and Lemma 4.1]{FournierMouhot2009}, the authors have proven that, for
every $\nu\in(0,1)$ and $\kappa\in(0,1],$ the above equation admits a unique
weak solution as follows. One assumes that there exists $s\in(\kappa,2)$ such
that $\int e^{\left\vert v\right\vert ^{s}}f_{0}(dv)<\infty.$ Then, there
exists a unique solution $f_{t}$ of (\ref{bo1}) which starts from $f_{0}.$
Moreover, the solution satisfies $\underset{t\leq T}{\sup}\int e^{\left\vert
v\right\vert ^{s^{\prime}}}f_{t}(dv)<\infty$ for every $s^{\prime}<s.$

Using Skorohod representation theorem, we find a measurable function
$v_{t}:[0,1]\rightarrow\mathbb{R}^{2}$ such that for every $\psi
:\mathbb{R}^{2}\rightarrow\mathbb{R}_{+}$%
\begin{equation}
\int_{0}^{1}\psi(v_{t}(\rho))d\rho=\int_{R^{2}}\psi(v)f_{t}(dv). \label{bo2}%
\end{equation}

Throughout the section, unless stated otherwise, we fix $\nu,\kappa$ and
$s\in(\kappa,2)$ and the corresponding solution $f_{t}(v)$ (and, in
particular, $v_{t}(\rho)).$

\subsubsection{Probabilistic Interpretation. Approximations}

In \cite{Tanaka1978}, the author gives a probabilistic interpretation for the
solutions of the classical Bolzmann equation (in dimension 3). A variant of
this result in dimension two (so for the equation (\ref{bo1})), as well as an
approximation result for it, is given in \cite[Section 2]{BallyFournier2011}.
We briefly recall these elements.

We let $\left(  \Omega,\mathcal{F},\mathbb{P}\right)  $ be a probability
space, the space $E:=\left[  -\frac{\pi}{2},\frac{\pi}{2}\right]
\times\lbrack0,1]$ and let $N(dt,d\theta,d\rho,du)$ be a Poisson point measure
on $E\times\mathbb{R}_{+}$ with intensity measure $\theta^{-(1+\nu)}%
d\theta\times d\rho\times du.$ We also consider the matrix
\[
A(\theta):=\frac{1}{2}\left(
\begin{tabular}
[c]{ll}%
$\cos\theta-1$ & $-\sin\theta$\\
$\sin\theta$ & $\cos\theta-1$%
\end{tabular}
\ \right)  =\frac{1}{2}(R_{\theta}-I).
\]
Then we are interested in the equation
\begin{equation}
V_{t}=V_{0}+\int_{0}^{t}\int_{E\times\mathbb{R}_{+}}A(\theta)(V_{s-}%
-v_{s}(\rho))1_{\{u\leq\left\vert V_{s-}-v_{s}(\rho)\right\vert ^{\kappa}%
\}}N(ds,d\theta,d\rho,du) \label{bo3}%
\end{equation}
with $\mathbb{P}(V_{0}\in dv)=f_{0}(dv).$

In the spirit of \cite[Section 2]{BallyFournier2011}, one also constructs the
following approximation. One considers a $C^{\infty}$ even non-negative
function $\chi$ supported by $[-1,1]$ and such that $\int_{R}\chi(x)dx=1.$ We
fix $\eta_{0}\in\left(  \frac{1}{s},\frac{1}{\kappa\vee\nu}\right)  .$ Given
$\varepsilon\in(0,1],$ we denote by $\Gamma_{\varepsilon}=\left(  \ln\frac
{1}{\varepsilon}\right)  ^{\eta_{0}}$ and define%
\begin{equation}
\varphi_{\varepsilon}(x)=\int_{R}((y\vee2\varepsilon)\wedge\Gamma
_{\varepsilon})\frac{\chi(\frac{x-y}{\varepsilon})}{\varepsilon}dy.
\label{bo5}%
\end{equation}
The reader is invited to note that $2\varepsilon\leq\varphi_{\varepsilon
}(x)\leq\Gamma_{\varepsilon},$ for every $x\in\mathbb{R},\varphi_{\varepsilon
}(x)=x,$ for $x\in(3\varepsilon,\Gamma_{\varepsilon}-1),\varphi_{\varepsilon
}(x)=2\varepsilon$ for $x\in(0,\varepsilon)$ and $\varphi_{\varepsilon
}(x)=\Gamma_{\varepsilon}$ for $x\in(\Gamma_{\varepsilon},\infty).$

To the cut off function $\varphi_{\varepsilon},$ one associates the equation
\begin{equation}
V_{t}^{\varepsilon}=V_{0}+\int_{0}^{t}\int_{E\times R_{+}}A(\theta
)(V_{s-}^{\varepsilon}-v_{s}(\rho))1_{\{u\leq\varphi_{\varepsilon}^{\kappa
}(\left\vert V_{s-}^{\varepsilon}-v_{s}(\rho)\right\vert )\}}N(ds,d\theta
,d\rho,du). \label{bo7}%
\end{equation}
Proposition 2.1 in \cite{BallyFournier2011} provides the following
probabilistic interpretation as well as an approximation result.

\begin{proposition}
[{\cite[Proposition 2.1]{BallyFournier2011}}]1. The equation (\ref{bo3}) has a
unique c\`{a}dl\`{a}g adapted solution $(V_{t})_{t\geq0}$ and its law
$\mathbb{P}(V_{t}\in dv)=f_{t}(dv)$\footnote{In this sense, $V_{t}$ provides a
probabilistic representation for $f_{t}$}$.$

2. The equation (\ref{bo7}) has a unique c\`{a}dl\`{a}g solution
$V^{\varepsilon}$ and
\begin{equation}
\sup_{t\leq T}\mathbb{E}\left[  \left\vert V_{t}-V_{t}^{\varepsilon
}\right\vert \right]  \leq Ce^{C\Gamma_{\varepsilon}^{\kappa}}\times
\varepsilon^{1+\kappa}. \label{bo8}%
\end{equation}
Moreover, there exists $\varepsilon_{0}>0$ such that, for every $0<s^{\prime
}<s,$%
\begin{equation}
\sup_{\varepsilon\leq\varepsilon_{0}}\mathbb{E}\left[  \sup_{t\leq T}\left(
e^{\left\vert V_{t}\right\vert ^{s^{\prime}}}+e^{\left\vert V_{t}%
^{\varepsilon}\right\vert ^{s^{\prime}}}\right)  \right]  <\infty. \label{bo4}%
\end{equation}

\end{proposition}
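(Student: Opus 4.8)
\noindent The plan is to treat both (\ref{bo3}) and (\ref{bo7}) as pure-jump stochastic equations with trajectory-dependent intensity on $E=[-\tfrac\pi2,\tfrac\pi2]\times[0,1]$, with $\mu(d\theta,d\rho)=\theta^{-(1+\nu)}d\theta\,d\rho$ and $\sigma\equiv b\equiv0$, the jump coefficient being $c(s,\theta,\rho,v)=A(\theta)(v-v_s(\rho))$ and the intensity $\gamma(s,\theta,\rho,v)=|v-v_s(\rho)|^\kappa$ for (\ref{bo3}), respectively $\gamma_\varepsilon(s,\theta,\rho,v)=\varphi_\varepsilon^\kappa(|v-v_s(\rho)|)$ for (\ref{bo7}). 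The recurrent a priori input is the energy non-expansiveness of the collision map: since $V'=\tfrac{V+v_*}2+R_\theta\tfrac{V-v_*}2$ and $v_*'=\tfrac{V+v_*}2-R_\theta\tfrac{V-v_*}2$ satisfy $|V'|^2+|v_*'|^2=|V|^2+|v_*|^2$, one has $|V'|\le\sqrt{|V|^2+|v_*|^2}$; combined with $\sup_{s\le T}\int_0^1|v_s(\rho)|^p\,d\rho=\sup_{s\le T}\int_{\mathbb R^2}|v|^pf_s(dv)<\infty$ for all $p$ (which follows from (\ref{bo2}) and the exponential moment bound on $f$), this yields $\sup_{t\le T}\mathbb E|V_t|^p+\sup_{\varepsilon,\,t\le T}\mathbb E|V_t^\varepsilon|^p<\infty$ for every $p$. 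Only $c$ and $\gamma_\varepsilon$ are Lipschitz in $v$ ($\|A(\theta)\|=|\sin(\theta/2)|\le\theta/2$ and $\varphi_\varepsilon$ is $1$-Lipschitz with $\varphi_\varepsilon\ge2\varepsilon$), while $|\cdot|^\kappa$ is merely $\kappa$-Hölder and unbounded; moreover $c$ grows linearly in $v$, so the boundedness conditions (\ref{Not7})--(\ref{Not8}) are not met literally.

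For (\ref{bo7}) I would truncate $c$ for large $|v|$ (keeping $\gamma_\varepsilon$, already bounded by $\Gamma_\varepsilon^\kappa$ by (\ref{Not5})), verify (\ref{Not6})--(\ref{Not8}) and (\ref{G5'}) (trivially) for the truncated coefficients with $l_c(\theta,\rho)\simeq l_\gamma(\theta,\rho)\simeq\theta$ — the relevant integrals being finite since $\int_0^{\pi/2}\theta\,\theta^{-(1+\nu)}d\theta<\infty$ as $\nu\in(0,1)$ — apply Theorem \ref{EU}, and then remove the truncation using the uniform moment bounds; uniqueness follows from the $\mathbb L^1$ estimate of Lemma \ref{Distance}. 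This is precisely the construction of \cite{Graham1992} and \cite{BallyFournier2011}. For (\ref{bo3}) one proceeds identically but must \emph{also} truncate the intensity, replacing $|v-v_s(\rho)|^\kappa$ by $|v-v_s(\rho)|^\kappa\wedge M$, obtain $V^M$, and pass to $M\to\infty$ using a Grönwall estimate of the type of Lemma \ref{Distance} in which the truncation gap enters exactly as the term $\alpha(G_2\setminus G_1)$ does there; this gives the unique càdlàg $\mathbb L^1$ solution $V$ of (\ref{bo3}). The identity $\mathbb P(V_t\in dv)=f_t(dv)$ is then obtained by applying Itô's formula to $\phi(V_t)$ for smooth $\phi$, recognising $t\mapsto\mathrm{Law}(V_t)$ as a weak solution of (\ref{bo1}), and invoking the uniqueness of weak solutions recalled above from \cite{FournierMouhot2009}. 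Finally, (\ref{bo4}) holds because the energy bound is independent of the jump rate and the cut-off perturbs that rate only by a harmless bounded amount for large velocities, so the standard propagation-of-exponential-moments argument (in the spirit of \cite{FournierMouhot2009}) applies uniformly in $\varepsilon\le\varepsilon_0(s')$.

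The core of the statement is the rate (\ref{bo8}), and here I would drive (\ref{bo3}) and (\ref{bo7}) by the \emph{same} Poisson measure $N$, set $\Delta_t=V_t-V_t^\varepsilon$ (so $\Delta_0=0$), and use that for any pure-jump path $\big||\Delta_{s-}+j|-|\Delta_{s-}|\big|\le|j|$; compensating $N$ gives
\[
\mathbb E|\Delta_t|\le\int_0^t\mathbb E\left[\int_{-\pi/2}^{\pi/2}\int_0^1\int_0^\infty\big|a\,1_{\{u\le p\}}-b\,1_{\{u\le q\}}\big|\,du\,d\rho\,\theta^{-(1+\nu)}d\theta\right]ds
\]
with $a=A(\theta)(V_s-v_s(\rho))$, $b=A(\theta)(V_s^\varepsilon-v_s(\rho))$, $p=|V_s-v_s(\rho)|^\kappa$, $q=\varphi_\varepsilon^\kappa(|V_s^\varepsilon-v_s(\rho)|)$. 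The $u$-integral is bounded by $|a-b|(p\wedge q)+(|a|\vee|b|)|p-q|$. The first, \emph{stability}, piece is $\le\tfrac\theta2|\Delta_s|\,\Gamma_\varepsilon^\kappa$, hence $\lesssim\Gamma_\varepsilon^\kappa|\Delta_s|$ after the $\theta$-integral. For the second piece I would split the $(\theta,\rho)$-integration by how $\varphi_\varepsilon$ acts on $|V_s^\varepsilon-v_s(\rho)|$: on $\{|V_s^\varepsilon-v_s(\rho)|\le3\varepsilon\}$ one has $p,q\lesssim\varepsilon^\kappa$ up to a $|\Delta_s|^\kappa$ correction and $|a|\vee|b|\lesssim\theta(\varepsilon+|\Delta_s|)$, which integrates to a genuine source of size $\varepsilon^{1+\kappa}$ plus lower-order terms in $|\Delta_s|$; on $\{|V_s^\varepsilon-v_s(\rho)|\ge\Gamma_\varepsilon-1\}$ the corresponding $\rho$- and $\mathbb P$-measure is $\le e^{-c(\Gamma_\varepsilon-1)^{s'}}\sup_{s,\varepsilon}\mathbb E\,e^{|V_s^\varepsilon-v_s(\rho)|^{s'}}$, which is $\ll\varepsilon^{1+\kappa}$ precisely because $\eta_0 s'>1$ (recall $\eta_0>1/s$); and on the bulk $\{3\varepsilon<|V_s^\varepsilon-v_s(\rho)|<\Gamma_\varepsilon-1\}$, where $\varphi_\varepsilon$ is the identity, $|p-q|=\big||V_s-v_s(\rho)|^\kappa-|V_s^\varepsilon-v_s(\rho)|^\kappa\big|\lesssim\min\{|\Delta_s|^\kappa,\ \varepsilon^{\kappa-1}|\Delta_s|\}$, contributing again (after using the uniform moment bounds) a $|\Delta_s|$-term. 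Collecting everything, $u(t):=\sup_{r\le t}\mathbb E|\Delta_r|$ obeys a Grönwall inequality of the shape $u(t)\le C\Gamma_\varepsilon^\kappa\int_0^t u(s)\,ds+C\varepsilon^{1+\kappa}$ — after, if needed, a preliminary qualitative step $u\to0$ to absorb the genuinely sublinear contributions — whence (\ref{bo8}), the factor $e^{C\Gamma_\varepsilon^\kappa}$ being exactly the Grönwall amplification of the coefficient $\Gamma_\varepsilon^\kappa$.

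The hard part is this last estimate: one must control the intensity gap $|p-q|$ and the amplitude gap $|a|\vee|b|$ \emph{simultaneously}, extracting the sharp $\varepsilon^{1+\kappa}$ from the small-argument region and genuine smallness from the large-argument region, while the state-dependence of the intensity is only $\kappa$-Hölder in $v$ — which is what forces the $\min\{|\Delta_s|^\kappa,\varepsilon^{\kappa-1}|\Delta_s|\}$ bookkeeping on the bulk and, quite possibly, the preliminary bootstrap. This region-by-region analysis is the technical heart of \cite{BallyFournier2011}, and I would follow it.
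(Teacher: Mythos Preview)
The paper does not prove this proposition: it is stated verbatim as \cite[Proposition 2.1]{BallyFournier2011} and then simply used. There is therefore no ``paper's own proof'' to compare against; your sketch is an outline of the argument in \cite{BallyFournier2011}, which you yourself acknowledge in the last paragraph, and as such it is appropriate here.
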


In the following, we assume that
\begin{equation}
\int e^{\left\vert v\right\vert ^{s}}f_{0}(dv)<\infty\quad\forall s<2.
\label{bo4'}%
\end{equation}
In particular this gives the restriction $\frac{1}{2}<\eta_{0}<\frac{1}%
{\kappa\vee\nu}.$ Then, if $p\geq1$ is such that $\kappa p<2,$ we may choose
$\eta_{0}$ such that $\eta_{0}\kappa p<1.$ This guarantees that for every
$a>0$ there exists $\varepsilon_{a}$ (small enough) such that, with
$a(\varepsilon)=\left(  \ln\frac{1}{\varepsilon}\right)  ^{-1+\eta_{0}\kappa
p},$
\begin{equation}
e^{\Gamma_{\varepsilon}^{\kappa p}}=\varepsilon^{-a(\varepsilon)}%
\leq\varepsilon^{-a}\quad\forall0<\varepsilon<\varepsilon_{a}. \label{bo4''}%
\end{equation}

\subsection{First Order Approximation}

The aim of this section is to construct an approximation of the solution
$V_{t}$ of the equation (\ref{bo3}) in which the small jumps, corresponding to
$\left\vert \theta\right\vert \leq\delta,$ are replaced by a drift term.

First, let us fix $\delta>0,\quad r=\frac{2-3\nu}{3+\kappa}\left(  \leq
\frac{1-\nu}{1-\kappa}\right)  $, set $\varepsilon=\delta^{r}$ and consider
the solution $V_{t}^{\varepsilon}$ of the truncated equation (\ref{bo7})
associated with this $\varepsilon.$ The inequality (\ref{bo8}) provides a
control of the distance between $V_{t}$ and $V_{t}^{\varepsilon}.$ Second, for
this solution $V_{t}^{\varepsilon}$ of (\ref{bo7}) we apply Lemma \ref{Conv}
in order to replace the small jumps by a convenient drift term.

To fall in the framework given in the first part of our paper, we will denote
by $V_{t_{0},t}^{\varepsilon}(v)$ the solution of the equation (\ref{bo7})
which starts from $v\in%
\mathbb{R}
^{2}$ at time $t_{0}\in\left[  0,T\right]  $ and we set $\mathcal{P}_{t_{0}%
,t}^{\varepsilon}f(v)=\mathbb{E}\left[  f\left(  V_{t_{0},t}^{\varepsilon
}(v)\right)  \right]  .$ We also denote (for $\varepsilon>0$ fixed above),%
\[
\mu(d\theta,d\rho)=\theta^{-(1+\nu)}d\theta\times d\rho,\quad c(t,\theta
,\rho,v)=A(\theta)(v-v_{t}(\rho)),\quad\gamma_{\varepsilon}(t,\rho
,v)=\varphi_{\varepsilon}^{\kappa}(\left\vert v-v_{t}(\rho)\right\vert ).
\]
The infinitesimal operator of $\mathcal{P}_{t_{0},t}^{\varepsilon}$ is simply
given by
\[
\mathcal{L}_{t}^{\varepsilon}f(v)=\int_{E}\mu(d\theta,d\rho)\gamma
(t,\rho,v)(f(v+c(t,\theta,\rho,v))-f(v)).
\]

We will replace the activity of small jumps (such that $\theta$ is close to
$0$) with a drift term. To this purpose, we denote by $E_{\delta}%
=\{(\theta,\rho):\left\vert \theta\right\vert >\delta\}$ and we define%
\begin{equation}
\left.
\begin{array}
[c]{l}%
b_{\delta}(t,v)=\int_{\{\left\vert \theta\right\vert \leq\delta\}}%
\gamma(t,\rho,v)c(t,\theta,\rho,v)\mu(d\theta,d\rho)\text{ and}\\
\widehat{\mathcal{L}}_{t}^{\delta}f(v)=b_{\delta}(t,v)\partial f(v)+\int
_{E_{\delta}}\mu(d\theta,d\rho)\gamma(t,\rho,v)(f(v+c(t,\theta,\rho,v))-f(v)).
\end{array}
\right.  \label{b1}%
\end{equation}
The approximating equation is
\[
U_{t_{0},t}^{\delta}(v)=v+\int_{t_{0}}^{t}b_{\delta}(s,U_{t_{0},s}^{\delta
}(v))ds+\int_{t_{0}}^{t}\int_{E_{\delta}\times R_{+}}c(s,\theta,\rho
,U_{t_{0},s-}^{\delta}(v))1_{\{u\leq\gamma(s,\rho,U_{t_{0},s-}^{\delta}%
(v))\}}N(ds,d\theta,d\rho,du).
\]
We denote by $\widehat{\mathcal{P}}_{t_{0},t}^{\delta}$ the semigroup
associated with $\widehat{\mathcal{L}}_{t}^{\delta},$ that is $\widehat
{\mathcal{P}}_{t_{0},t}^{\delta}f(v):=\mathbb{E}\left[  f\left(  U_{t_{0}%
,s}^{\delta}(v)\right)  \right]  $.\ 

\begin{theorem}
\label{ThBoltzmannOrder1}Suppose that $\kappa<\frac{1}{8}$ and $\nu<\frac
{1}{2}.$ For every $\eta<\frac{(2-3\nu)(1+\kappa)}{3+\kappa}$\ there exists
$C_{\eta}\geq1$ and $\delta_{\eta}>0$ such that for $0<\delta\leq\delta_{\eta
}$\ we have
\begin{equation}
\left\vert \mathbb{E}\left[  f(V_{t})\right]  -\mathbb{E}\left[  f\left(
U^{\delta}(V_{0})\right)  \right]  \right\vert \leq C_{\eta}\left\Vert
f\right\Vert _{2,\infty}\times\delta^{\eta}. \label{bo12}%
\end{equation}

\end{theorem}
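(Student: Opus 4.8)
The plan is to decompose the total error into two pieces via the triangle inequality:
\[
\left\vert \mathbb{E}\left[  f(V_{t})\right]  -\mathbb{E}\left[  f\left(
U^{\delta}(V_{0})\right)  \right]  \right\vert \leq\left\vert \mathbb{E}\left[
f(V_{t})\right]  -\mathbb{E}\left[  f(V_{t}^{\varepsilon})\right]  \right\vert
+\left\vert \mathbb{E}\left[  f(V_{t}^{\varepsilon})\right]  -\mathbb{E}\left[
f\left(  U^{\delta}(V_{0})\right)  \right]  \right\vert .
\]
The first term is the ``cut-off error'' and is handled directly by (\ref{bo8}): since $f\in C_{b}^{2}$ is in particular Lipschitz, $\left\vert \mathbb{E}[f(V_{t})]-\mathbb{E}[f(V_{t}^{\varepsilon})]\right\vert \leq\left\Vert f\right\Vert _{1,\infty}\sup_{t\leq T}\mathbb{E}[\left\vert V_{t}-V_{t}^{\varepsilon}\right\vert ]\leq C\left\Vert f\right\Vert _{1,\infty}e^{C\Gamma_{\varepsilon}^{\kappa}}\varepsilon^{1+\kappa}$. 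Recalling $\varepsilon=\delta^{r}$ with $r=\tfrac{2-3\nu}{3+\kappa}$, and using (\ref{bo4''}) to absorb the $e^{C\Gamma_{\varepsilon}^{\kappa}}$ factor into a tiny power $\varepsilon^{-a}$, this contributes a term of size $C\delta^{r(1+\kappa)-ra}=C\delta^{(2-3\nu)(1+\kappa)/(3+\kappa)-a'}$ for arbitrarily small $a'$. That accounts for the exponent $\eta<\tfrac{(2-3\nu)(1+\kappa)}{3+\kappa}$ in the statement.

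The second term is the ``small-jumps-to-drift'' error, and here I would invoke Theorem \ref{Conv} applied to $\mathcal{P}^{\varepsilon}$ (playing the role of $\mathcal{P}$, i.e.\ the reference semigroup whose regularity enters through $H_{2}(k),H_{3}(k,q)$) and $\widehat{\mathcal{P}}^{\delta}$ (playing the role of $\widehat{\mathcal{P}}$, which must satisfy $H_{1}(q)$). One takes $q=2$ (since after replacing jumps by drift there is no diffusion term, so $H_{3}(k,2)$ is the natural order). The key input is the bound on $\left\Vert \psi_{k}^{-1}(\mathcal{L}_{t}^{\varepsilon}-\widehat{\mathcal{L}}_{t}^{\delta})f\right\Vert _{\infty}$: from the definitions (\ref{b1}), the difference $\mathcal{L}_{t}^{\varepsilon}f-\widehat{\mathcal{L}}_{t}^{\delta}f=\int_{\{\left\vert \theta\right\vert \leq\delta\}}\gamma(f(v+c)-f(v)-c\cdot\partial f(v))\mu(d\theta,d\rho)$ is exactly the second-order Taylor remainder of the small jumps, hence bounded by $\tfrac12\left\Vert f\right\Vert _{2,\infty}\int_{\{\left\vert \theta\right\vert \leq\delta\}}\left\vert c(t,\theta,\rho,v)\right\vert ^{2}\gamma(t,\rho,v)\mu(d\theta,d\rho)$. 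Using $\left\vert A(\theta)\right\vert \lesssim\left\vert \theta\right\vert$, $\left\vert c\right\vert \lesssim\left\vert \theta\right\vert \left\vert v-v_{t}(\rho)\right\vert$, $\gamma=\varphi_{\varepsilon}^{\kappa}\leq\Gamma_{\varepsilon}^{\kappa}$, and $\mu(d\theta)=\theta^{-(1+\nu)}d\theta$, one gets $\int_{\{\left\vert \theta\right\vert \leq\delta\}}\left\vert \theta\right\vert ^{2}\theta^{-(1+\nu)}d\theta\sim\delta^{2-\nu}$, times a $v$-dependent integral over $\rho$ of $\left\vert v-v_{t}(\rho)\right\vert ^{2}$ which, thanks to the exponential moment bound (\ref{bo4})--(\ref{bo4'}) on $v_{t}$, is controlled by $C\psi_{2}(v)$. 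So $\varepsilon(t)=\varepsilon(\delta)\lesssim\Gamma_{\varepsilon}^{\kappa}\delta^{2-\nu}$, and again using (\ref{bo4''}) to swallow the logarithmic factor $\Gamma_{\varepsilon}^{\kappa}=(\ln\tfrac1\varepsilon)^{\eta_{0}\kappa}$ this is $\lesssim\delta^{2-\nu-a'}$. Theorem \ref{Conv} then yields $\left\vert \mathbb{E}[f(V_{t}^{\varepsilon})]-\mathbb{E}[f(U^{\delta})]\right\vert \leq C_{k}(T,\mathcal{P}^{\varepsilon})Q_{2}(T,\widehat{\mathcal{P}}^{\delta})\left\Vert f\right\Vert _{2,\infty}T\cdot\delta^{2-\nu-a'}$.

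The remaining work is to check that the constants $C_{k}(T,\mathcal{P}^{\varepsilon})$ (Assumption $H_{2}(k)$ for $\mathcal{P}^{\varepsilon}$, plus $H_{3}(k,2)$) and $Q_{2}(T,\widehat{\mathcal{P}}^{\delta})$ (Assumption $H_{1}(2)$ for $\widehat{\mathcal{P}}^{\delta}$, plus $H_{3}(k,2)$) are finite and, crucially, do not blow up too fast as $\delta\to0$. This is where the hypotheses $\kappa<\tfrac18$ and $\nu<\tfrac12$ enter: $Q_{2}(T,\widehat{\mathcal{P}}^{\delta})$ involves quantities like $\alpha_{2,8}^{4}(C,E)$ and $\Gamma_{E,2}(\gamma)$ and $[\partial^{\beta}\ln\gamma]_{E,8}$ built from $\gamma=\varphi_{\varepsilon}^{\kappa}$, and these carry powers of $\Gamma_{\varepsilon}$ (through $\sup\gamma$) and of $\varepsilon^{-1}$ (through derivatives of $\varphi_{\varepsilon}$, which scale like $\varepsilon^{-1}$ near the plateaus); one needs $\kappa$ small enough that these polynomial-in-$\varepsilon^{-1}$ and logarithmic blowups, when multiplied by $\delta^{2-\nu}$ and converted through $\varepsilon=\delta^{r}$, still leave a positive power of $\delta$. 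I expect this bookkeeping --- propagating all the $\varepsilon$- and $\delta$-dependencies through the definitions (\ref{D5''}), (\ref{D14'}), (\ref{Dist1}) and verifying $H_{1},H_{2},H_{3}$ for the Boltzmann coefficients with exponential-moment control on $v_{t}(\rho)$ --- to be the main obstacle; the structural argument (triangle inequality + (\ref{bo8}) + Theorem \ref{Conv} + Taylor remainder) is routine, but making the final exponent come out to any $\eta<\tfrac{(2-3\nu)(1+\kappa)}{3+\kappa}$ requires carefully tracking which of the two error terms dominates (one checks that under $\kappa<\tfrac18,\nu<\tfrac12$ the cut-off term $\delta^{r(1+\kappa)}$ is the binding one, since $r(1+\kappa)<2-\nu$) and then letting $a'\to0$.
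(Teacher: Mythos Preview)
Your approach is essentially the paper's: triangle inequality into the cut-off error handled by (\ref{bo8}) plus the small-jumps-to-drift error handled by Theorem~\ref{Conv} with $q=2$, with the Taylor remainder giving $\varepsilon(t)\lesssim\Gamma_\varepsilon^\kappa\delta^{2-\nu}$ for the generator difference, and with the weight $\psi_2$ absorbing the $|v-v_t(\rho)|^2$ factor via the moment bounds on $f_t$.

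There is, however, a real slip in your final balance. You compare $r(1+\kappa)$ with $2-\nu$ and declare the cut-off term binding. That comparison ignores the blow-up of $Q_2(T,\widehat{\mathcal{P}}^\delta)$, which is \emph{not} logarithmic: the quantities $[\partial^\beta\ln\gamma]_{E_\delta,8}$ and $\Gamma_{E_\delta,2}(\gamma)$ involve derivatives of $\ln\varphi_\varepsilon$, which by (\ref{bo6a}) scale like $\varepsilon^{-|\alpha|}$, and the $\theta$-integral over $\{|\theta|>\delta\}$ contributes a further $\delta^{-\nu}$. Working through (\ref{Dist1}) one obtains $Q_2(T,\widehat{\mathcal{P}}^\delta)\leq C\delta^{-2(\nu+r+a)}$. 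Hence the second error is not $\delta^{2-\nu}$ but
\[
C_k(T,\mathcal{P}^\varepsilon)\,Q_2(T,\widehat{\mathcal{P}}^\delta)\,\delta^{2-\nu}\;\lesssim\;\varepsilon^{-a}\cdot\delta^{-2(\nu+r)}\cdot\delta^{2-\nu}\;=\;\delta^{\,2-3\nu-2r-a'}.
\]
The specific choice $r=\frac{2-3\nu}{3+\kappa}$ is made precisely so that $r(1+\kappa)=2-3\nu-2r$: the two errors are \emph{balanced}, neither dominates. Once you carry the bookkeeping through with this correction, both terms land at $\delta^{(2-3\nu)(1+\kappa)/(3+\kappa)-a'}$ and letting $a'\to0$ gives the stated exponent $\eta$.
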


The proof essentially consists in the use of Theorem \ref{Conv} combined with
(\ref{bo8}). For our readers' sake, the complete proof is given in Section
\ref{SectionProofBoltzmann}.

\subsection{Second Order Approximation}

We define%
\[
\left.
\begin{array}
[c]{l}%
\sigma(t,\theta,\rho,v)=c(t,\theta,\rho,v)\sqrt{\gamma_{\varepsilon}%
(t,\rho,v)},\quad a_{\delta}^{i,j}(t,v)=\int_{\{\left\vert \theta\right\vert
\leq\delta\}}\mu(d\theta,d\rho)\sigma^{i}\sigma^{j}(t,\theta,\rho,v),\\
\widehat{\mathcal{L}}_{t}^{\delta}f(v)=\left\langle b_{\delta}(t,v),\nabla
f(v)\right\rangle +\frac{1}{2}\sum_{i,j=1}^{d}a_{\delta}^{i,j}(t,v)\partial
_{ji}^{2}f(v)\\
\text{ \ \ \ \ \ \ }+\int_{E_{\delta}}\mu(d\theta,d\rho)\gamma_{\varepsilon
}(t,\rho,v)(f(v+c(t,\theta,\rho,v))-f(v)).
\end{array}
\right.
\]
where $b_{\delta}$ is given by (\ref{b1}). This is the infinitesimal operator
corresponding to the semigroup $\widehat{\mathcal{P}}_{t_{0},t}^{\delta
}f(v)=\mathbb{E}\left[  f(U_{t_{0},t}^{\delta}(v))\right]  $ with $U_{t_{0}%
,t}^{\delta}(v)$ solution to
\[
\left.
\begin{array}
[c]{l}%
U_{t_{0},t}^{\delta}(v)=v+\int_{t_{0}}^{t}b_{\delta}\left(  s,U_{t_{0}%
,s}^{\delta}(v)\right)  ds+\int_{0}^{t}\int_{E_{\delta}}\sigma_{\delta
}(s,\theta,\rho,U_{t_{0},s-}^{\delta}(v))W_{\mu}(ds,d\theta,d\rho)\\
\text{ \ \ \ \ \ \ \ \ \ \ }+\int_{0}^{t}\int_{E_{\delta}\times R_{+}%
}c(s,\theta,\rho,U_{t_{0},s-}^{\delta}(v))1_{\{u\leq\gamma(s,\rho,U_{t_{0}%
,s-}^{\delta}(v))\}}N(ds,d\theta,d\rho,du).
\end{array}
\right.
\]
The approach is quite similar to the first order. The main result is the following.

\begin{theorem}
\label{ThBoltzmann2}Let us assume that\textbf{\ }$\kappa\leq\frac{1}{18}$ and
let
\begin{equation}
r<\frac{1-\nu}{2-\kappa}\wedge\frac{1-\frac{\nu}{2}}{2-\frac{\kappa}{2}}%
\wedge\frac{3-4\nu}{4+\kappa}. \label{L7}%
\end{equation}
Then%
\begin{equation}
\left\Vert \frac{1}{\psi_{3}}(\mathcal{P}_{t_{0},t}f-\widehat{\mathcal{P}%
}_{t_{0},t}^{\delta})f\right\Vert _{\infty}\leq C\delta^{r(1+\kappa)}%
\times\left\Vert f\right\Vert _{3,\infty}. \label{L5}%
\end{equation}

\end{theorem}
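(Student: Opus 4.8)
\textbf{Proof strategy for Theorem \ref{ThBoltzmann2}.}

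The plan is to apply the abstract distance estimate of Theorem \ref{Conv} with $\mathcal{P}$ the true Boltzmann semigroup (associated with \eqref{bo3}), $\widehat{\mathcal{P}}$ the second-order hybrid approximation $\widehat{\mathcal{P}}^{\delta}$, and the weight $\psi_{3}$. The overall error splits into two contributions by the triangle inequality: first, the cut-off error $\left\Vert \psi_{3}^{-1}(\mathcal{P}_{t_{0},t}-\mathcal{P}_{t_{0},t}^{\varepsilon})f\right\Vert_{\infty}$ controlled through \eqref{bo8} (with $\varepsilon=\delta^{r}$), and second, the ``small-jumps replaced by diffusion'' error $\left\Vert \psi_{3}^{-1}(\mathcal{P}_{t_{0},t}^{\varepsilon}-\widehat{\mathcal{P}}_{t_{0},t}^{\delta})f\right\Vert_{\infty}$, for which I would invoke Theorem \ref{Conv}. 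So the first step is to verify the hypotheses of Theorem \ref{Conv} for these two semigroups: $H_{2}(3)$ and $H_{3}(3,3)$ for $\mathcal{P}^{\varepsilon}$ (the moment control \eqref{bo4} of the Boltzmann flow takes care of the $\psi_{k}$-growth, and the boundedness of $A(\theta)$ together with $\varphi_{\varepsilon}\le\Gamma_{\varepsilon}$ gives the operator bounds, though with constants exploding like $e^{C\Gamma_{\varepsilon}^{\kappa p}}$), and $H_{1}(3)$, $H_{3}(3,3)$ for $\widehat{\mathcal{P}}^{\delta}$ (here one needs three bounded derivatives in $v$ of $c$, $\sigma_{\delta}=c\sqrt{\gamma_{\varepsilon}}$, $b_{\delta}$, and of $\ln\gamma_{\varepsilon}$, which follow from smoothness of $\varphi_{\varepsilon}$, the lower bound $\varphi_{\varepsilon}\ge 2\varepsilon$, and the explicit form of $A(\theta)$).

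The second, and genuinely computational, step is to estimate the local discrepancy of the generators, i.e. to produce the function $\varepsilon(t)$ in \eqref{dist4'}. The key observation is that the difference $\mathcal{L}_{t}^{\varepsilon}-\widehat{\mathcal{L}}_{t}^{\delta}$ acts only on the small-angle part $\{\left\vert\theta\right\vert\le\delta\}$, where $\widehat{\mathcal{L}}^{\delta}$ carries the second-order Taylor expansion $b_{\delta}(t,v)\partial f+\tfrac12 a_{\delta}^{i,j}(t,v)\partial^{2}_{ij}f$ of the jump operator. A third-order Taylor expansion of $f(v+c(t,\theta,\rho,v))-f(v)$ in the jump size $c=A(\theta)(v-v_{t}(\rho))$, combined with $\left\vert A(\theta)\right\vert\le C\left\vert\theta\right\vert$, shows that the remainder is controlled by
\[
\left\Vert\frac{1}{\psi_{3}}(\mathcal{L}_{t}^{\varepsilon}-\widehat{\mathcal{L}}_{t}^{\delta})f\right\Vert_{\infty}\le C\left\Vert f\right\Vert_{3,\infty}\sup_{v}\frac{1}{\psi_{3}(v)}\int_{\{\left\vert\theta\right\vert\le\delta\}}\left\vert\theta\right\vert^{3}\left\vert v-v_{t}(\rho)\right\vert^{3}\varphi_{\varepsilon}^{\kappa}(\left\vert v-v_{t}(\rho)\right\vert)\,\theta^{-(1+\nu)}d\theta\,d\rho,
\]
and the $\theta$-integral is $\int_{0}^{\delta}\theta^{2-\nu}d\theta\asymp\delta^{3-\nu}$; the remaining moment in $\rho$ and $v$ is finite uniformly in $\varepsilon$ thanks to \eqref{bo4} and is absorbed into $\psi_{3}$. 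One must also check that $v_{t}(\rho)$ has enough integrability in $\rho$ (again via \eqref{bo4}) for the $\psi_{3}$-weighting to work. Hence $\varepsilon(t)\le C\,\delta^{3-\nu}$ is a candidate; one then optimizes the exponents against the cut-off error $e^{C\Gamma_{\varepsilon}^{\kappa}}\varepsilon^{1+\kappa}=\delta^{r(1+\kappa)}\varepsilon^{-a}$ and the blow-up of $Q_{3}(T,\widehat{\mathcal{P}}^{\delta})$ (which is polynomial in $\Gamma_{\varepsilon}$ up to a factor $e^{C\Gamma_{\varepsilon}^{\kappa}}$), using \eqref{bo4''} to swallow all the $e^{C\Gamma_{\varepsilon}^{\kappa p}}$ factors at the price of an arbitrarily small power $\varepsilon^{-a}=\delta^{-ra}$. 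The constraints $\kappa\le\frac{1}{18}$ and the three-way minimum \eqref{L7} on $r$ are precisely what is needed so that, after multiplying the various error terms by $Q_{3}(T,\widehat{\mathcal{P}}^{\delta})$ and $C_{3}(T,\mathcal{P})$ and balancing, the net exponent is at least $r(1+\kappa)$ and the dominant term is $C\delta^{r(1+\kappa)}\left\Vert f\right\Vert_{3,\infty}$.

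The main obstacle I anticipate is bookkeeping the exponential-in-$\Gamma_{\varepsilon}$ constants. The regularity constant $Q_{3}(T,\widehat{\mathcal{P}}^{\delta})$ from Theorem \ref{M} depends on $\alpha_{3,12}(C,E)$, on $\Gamma_{E,3}(\gamma_{\varepsilon})$, and on $[\partial^{\beta}\ln\gamma_{\varepsilon}]_{E,12}$; since $\gamma_{\varepsilon}=\varphi_{\varepsilon}^{\kappa}(\left\vert v-v_{t}(\rho)\right\vert)$ is bounded below by $(2\varepsilon)^{\kappa}$ but its logarithmic derivatives and the moment $\int\gamma_{\varepsilon}\,d\mu$ grow like powers of $\Gamma_{\varepsilon}=(\ln\tfrac{1}{\varepsilon})^{\eta_{0}}$ (and $\Gamma_{\varepsilon}^{\kappa}$ appears in an exponential via $a_{p}(\cdot)$ inside $\alpha_{q,p}$), one gets $Q_{3}(T,\widehat{\mathcal{P}}^{\delta})\le C e^{C\Gamma_{\varepsilon}^{\kappa}}\cdot(\text{polynomial in }\Gamma_{\varepsilon})$. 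The delicate point is that the three separate error sources ($\delta^{3-\nu}$ from the Taylor remainder, $\delta^{r(1+\kappa)}$ from the cut-off, and the loss $\varepsilon^{-a}=\delta^{-ra}$ from the exponential constants) must combine so that the worst exponent still beats $r(1+\kappa)$; this forces the particular form of \eqref{L7} — the first term $\frac{1-\nu}{2-\kappa}$ controls the balance between cut-off and diffusion-replacement error, the second $\frac{1-\nu/2}{2-\kappa/2}$ the second-order moment term $a_{\delta}$, and the third $\frac{3-4\nu}{4+\kappa}$ the third-order Taylor remainder — and $\kappa\le\frac{1}{18}$ makes all three simultaneously admissible while keeping $a$ arbitrarily small.
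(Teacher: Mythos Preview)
Your overall architecture matches the paper's: split $\mathcal{P}_{t_0,t}-\widehat{\mathcal{P}}_{t_0,t}^{\delta}$ into the cut-off piece $\mathcal{P}-\mathcal{P}^{\varepsilon}$ (handled by \eqref{bo8}) and the small-jumps-by-diffusion piece $\mathcal{P}^{\varepsilon}-\widehat{\mathcal{P}}^{\delta}$ (handled by Theorem \ref{Conv} with $q=k=3$, the third-order Taylor remainder giving $\varepsilon(t)\le C\delta^{3-\nu}\Gamma_{\varepsilon}^{\kappa}$), then balance. That part is correct.

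There is, however, a concrete misidentification of the dominant growth in $Q_{3}(T,\widehat{\mathcal{P}}^{\delta})$. You assert that the logarithmic derivatives of $\gamma_{\varepsilon}=\varphi_{\varepsilon}^{\kappa}(|v-v_{t}(\rho)|)$ grow like powers of $\Gamma_{\varepsilon}=(\ln\tfrac{1}{\varepsilon})^{\eta_{0}}$, so that $Q_{3}\le Ce^{C\Gamma_{\varepsilon}^{\kappa}}\cdot(\text{polynomial in }\Gamma_{\varepsilon})\le C\varepsilon^{-a}$. This is false: by \eqref{bo6a} one has $|\partial^{\alpha}\ln\varphi_{\varepsilon}(|w|)|\le C|w|^{-|\alpha|}$ on $\{|w|\in(\varepsilon,\Gamma_{\varepsilon}-1]\}$, and since $|w|$ reaches down to $\varepsilon$, the quantities $[\partial^{\beta}\ln\gamma_{\varepsilon}]_{E_{\delta},p}$ and $\Gamma_{E_{\delta},q}(\gamma_{\varepsilon})$ blow up like genuine inverse powers of $\varepsilon$, not of $\Gamma_{\varepsilon}$. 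In the paper's bookkeeping (see \eqref{a1c} and Remark \ref{remq3}) this yields $\Gamma_{E_{\delta},3}(\gamma_{\varepsilon})+\sum_{1\le|\beta|\le 3}[\partial^{\beta}\ln\gamma_{\varepsilon}]_{E_{\delta},12}\le C\delta^{-3\nu}\varepsilon^{-3}=C\delta^{-3(\nu+r)}$, hence $Q_{3}(T,\widehat{\mathcal{P}}^{\delta})\le C\delta^{-3(\nu+r+a)}$, a hard negative power of $\delta$. With your polylogarithmic bound the product $C_{3}(T,\mathcal{P}^{\varepsilon})\cdot Q_{3}\cdot\delta^{3-\nu}$ would be $\delta^{3-\nu-a}$, and you would have no mechanism producing the constraint $r<\frac{3-4\nu}{4+\kappa}$. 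The correct product is $\varepsilon^{-a}\cdot\delta^{-3(\nu+r)}\cdot\delta^{3-\nu}=\delta^{3-4\nu-3r-a}$; equating this exponent with $r(1+\kappa)$ from the cut-off error is exactly what gives the third term in \eqref{L7}.

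Your reading of the other two constraints is also off. The bound $r\le\frac{1-\nu}{2-\kappa}$ is not a ``balance between cut-off and diffusion-replacement'': it is the condition ensuring $\|b_{\delta}\|_{2,3,\infty}<\infty$ (the $|\alpha|=3$ case of $|\partial^{\alpha}b_{\delta}|\le C\delta^{1-\nu}\varepsilon^{1+\kappa-|\alpha|}$; see Remark \ref{remq3}), which keeps $\theta_{3,p}(E_{\delta})$ and hence $\alpha_{3,12}$ at the level $\varepsilon^{-a}$ rather than a power of $\delta^{-1}$. The bound $r\le\frac{1-\nu/2}{2-\kappa/2}$ plays the analogous role for $\|\sigma_{\delta}\|_{1,3,(\mu,\infty)}$. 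Finally, $\kappa\le\frac{1}{18}$ has nothing to do with making the three constraints on $r$ compatible; it is only the condition $4q^{2}\kappa<2$ (with $q=3$) that allows \eqref{bo4''} to absorb $\exp(C[\nabla c]_{E_{\delta},4q^{2}}^{4q^{2}})\le\exp(C\Gamma_{\varepsilon}^{4q^{2}\kappa})$ into $\varepsilon^{-a}$.
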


\begin{remark}
It turns out that the second order error is larger then the first order error.
This is somewhat counterintuitive. This is due to the fact that we mix two
different errors: $\mathcal{P}_{t_{0},t}f-\mathcal{P}_{t_{0},t}^{\varepsilon
}\sim\varepsilon^{1+\kappa}$ and $\mathcal{P}_{t_{0},t}^{\varepsilon
}f-\widehat{\mathcal{P}}_{t_{0},t}^{\delta}\sim\delta^{3-\nu}\varepsilon
^{-3}.$ If $\varepsilon$ \ is fixed then the second order error is
$\delta^{3-\nu}$ and the first order error is $\delta^{2-\nu}$ and this seems
coherent. But if we mix the two errors things become less obvious.
\end{remark}

\section{Proof of the Results}

\subsection{Proof of the Results in Section \ref{SectHybridSyst}%
\label{ProofLocalizationLemma}}

We begin with the estimates given in Lemma \ref{Distance}.

\begin{proof}
[Proof of Lemma \ref{Distance}]We follow the ideas in \cite{Graham1992} so we
just sketch the proof. Let us fix the initial time $s<T.$ For every
$t\in\left[  s,T\right]  ,$ one has%
\[
\left\vert \Delta X_{s,t}\right\vert \leq\left\vert \Delta X_{s,s}\right\vert
+\left\vert \int_{s}^{t}\int_{E}h(r,z)dW_{\mu}(dr,dz)\right\vert +\left\vert
\int_{s}^{t}g(r)dr\right\vert +\left\vert \int_{s}^{t}\int_{E\times
\lbrack0,2\Gamma]}H(r-,z,u)N_{\mu}(dr,dz,du)\right\vert
\]
where%
\[
h(r,z)=\sigma(r,z,X_{s,r}^{G_{1}})-\sigma(r,z,X_{s,r}^{G_{2}}),\quad
g(r)=b(r,X_{s,r}^{G_{1}})-b(r,X_{s,r}^{G_{2}})
\]
and%
\[
H(r,z,u)=1_{G_{1}}(z)c(r,z,X_{s,r-}^{G_{1}})1_{\{u\leq\gamma(r,z,X_{s,r-}%
^{G_{1}})\}}-1_{G_{2}}(z)c(r,z,X_{s,r-}^{G_{2}})1_{\{u\leq\gamma
(r,z,X_{s,r-}^{G_{2}})\}},
\]
for all $\left(  r,z,u\right)  \in\left[  s,t\right]  \times E\times%
\mathbb{R}
_{+}$. Using the inequality
\[
\left\vert h(r,z)\right\vert \leq\left\vert \Delta X_{s,r}\right\vert
\times\int_{0}^{1}\left\vert \nabla\sigma(r,z,\lambda X_{s,r-}^{G_{1}%
}+(1-\lambda)X_{s,r-}^{G_{2}})\right\vert d\lambda,
\]
we obtain%
\[
\left[  \int_{E}\left\vert h(r,z)\right\vert ^{2}\mu(dz)\right]  ^{\frac{1}%
{2}}\leq\left\Vert \nabla\sigma\right\Vert _{(\mu,\infty)}\left\vert \Delta
X_{s,r}\right\vert .
\]
Burkholder's inequality yields%
\begin{align*}
&  \mathbb{E}\left[  \sup_{s\leq t^{\prime}\leq t}\left\vert \int
_{s}^{t^{\prime}}\int_{E}h(r,z)dW_{\mu}(dr,dz)\right\vert \right]  \leq
C\mathbb{E}\left[  \left(  \int_{s}^{t}\int_{E}\left\vert h(r,z)\right\vert
^{2}\mu(dz)dr\right)  ^{\frac{1}{2}}\right] \\
&  \leq C\left\Vert \nabla\sigma\right\Vert _{(\mu,\infty)}\mathbb{E}\left[
\left(  \int_{s}^{t}\left\vert \Delta X_{s,r}\right\vert ^{2}dr\right)
^{\frac{1}{2}}\right]  \leq C\left\Vert \nabla\sigma\right\Vert _{(\mu
,\infty)}\left(  t-s\right)  ^{\frac{1}{2}}\mathbb{E}\left[  \sup_{s\leq r\leq
t}\left\vert \Delta X_{s,r}\right\vert \right]  .
\end{align*}
And the same inequality holds for $g.$ Finally, since $N_{\mu}$ is a positive
measure, one has%
\[
\mathbb{E}\left[  \sup_{s\leq t^{\prime}\leq t}\left\vert \int_{s}^{t^{\prime
}}\int_{E\times\lbrack0,2\Gamma]}H(r-,z,u)N_{\mu}(drdzdu)\right\vert \right]
\leq\mathbb{E}\left[  \int_{s}^{T}dr\int_{E\times\lbrack0,2\Gamma]}\left\vert
H(r,z,u)\right\vert \mu(dz)du\right]  .
\]
A careful analysis of the term $\left\vert H(r,z,u)\right\vert $ shows that
the above term is upper bounded by $(t-s)\alpha(G_{2}\diagdown G_{1}%
)+C(c,\gamma)\int_{s}^{t}\mathbb{E}\left[  \left\vert \Delta X_{s,r}%
\right\vert \right]  dr.$ Going back to the initial inequality in our proof,
one gets%
\begin{align*}
\mathbb{E}\left[  \sup_{s\leq r\leq t}\left\vert \Delta X_{s,r}\right\vert
\right]   &  \leq\left\vert \Delta X_{s,s}\right\vert +(t-s)\alpha
(G_{2}\diagdown G_{1})\\
&  +C(\left\Vert \nabla\sigma\right\Vert _{(\mu,\infty)}+\left\Vert \nabla
b\right\Vert _{\infty}+C(\gamma,c)))\left(  t-s\right)  ^{\frac{1}{2}%
}\mathbb{E}\left[  \sup_{s\leq r\leq t}\left\vert \Delta X_{s,r}\right\vert
\right]  .
\end{align*}
Hence, whenever $t-s\leq\delta:=\left(  2C(\left\Vert \nabla\sigma\right\Vert
_{(\mu,\infty)}+\left\Vert \nabla b\right\Vert _{\infty}+C(\gamma,c)))\right)
^{-2}$, one gets
\[
\mathbb{E}\left[  \sup_{s\leq r\leq t}\left\vert \Delta X_{s,r}\right\vert
\right]  \leq2\left(  \left\vert \Delta X_{s,s}\right\vert +(t-s)\alpha
(G_{2}\diagdown G_{1})\right)  .
\]
The argument follows by partitioning $\left[  s,T\right]  $ in $n\leq4T\left(
C(\left\Vert \nabla\sigma\right\Vert _{(\mu,\infty)}+\left\Vert \nabla
b\right\Vert _{\infty}+C(\gamma,c)))\right)  ^{2}+1$ subintervals of length
$\delta$ and iterating.
\end{proof}

\subsection{Proof of the Results in Section \ref{SectDifferentiability}}

The proof of Lemma \ref{EstimatesPD} makes extensive use of moment estimates
of some kind of linear-type stochastic system. To this purpose, we begin with
briefly explaining the type of system and the estimates we have in mind.

\subsubsection{Preliminary Arguments for Lemma \ref{EstimatesPD}: Moment
Estimates for Linear SDE}

In this section we consider the $d-$dimensional linear equation
\begin{align}
V_{t}  &  =V_{0}+\int_{0}^{t}\int_{E}(h(s)+\left\langle \nabla b(s,X_{s}%
),V_{s}\right\rangle )ds\label{M1}\\
&  +\int_{0}^{t}\int_{E}(H(s,z)+\left\langle \nabla\sigma(s,z,X_{s}%
),V_{s}\right\rangle )W_{\mu}(ds,dz)\nonumber\\
&  +\int_{0}^{t}\int_{G\times(0,2\Gamma)}(Q(s-,z)+\left\langle \nabla
_{x}c(s,z,X_{s-}),V_{s-}\right\rangle )1_{\{u\leq\gamma(s,z,X_{s-})\}}N_{\mu
}(ds,du,dz)\nonumber
\end{align}
Here $X_{s}$ is the solution of the equation (\ref{I1}) and $H,h$ and $Q$ are
predictable processes which verify%
\[
\mathbb{E}\left[  \int_{0}^{T}\left(  \left\Vert H(s,\cdot)\right\Vert
_{\mathbb{L}^{2}(\mu)}^{2}+\left\vert h(s)\right\vert \right)  ds+\sup_{s\leq
T}\sup_{x\in\mathbb{R}^{d}}\int_{G}\left\vert Q(s,z)\right\vert \gamma
(s,z,x)\mu(dz))\right]  <\infty.
\]
This type of condition is needed in order for the corresponding stochastic
(respectively Lebesgue) integrals in (\ref{M1}) to make sense.

\begin{proposition}
\label{Moments}We assume that there exists some predictable process $R$ and
some measurable function $\rho:\mathbb{R}_{+}\times E\times\mathbb{R}%
^{d}\rightarrow\mathbb{R}_{+}$ such that
\begin{equation}
\left\vert Q(s,z)\right\vert \leq\rho(s,z,X_{s})\left\vert R_{s}\right\vert ,
\label{M2}%
\end{equation}
$\mathbb{P}$-almost everywhere on $\Omega$, for all $\left(  s,z\right)
\in\mathbb{R}_{+}\times E.$ Then, for every $p\geq2$ there exists a universal
constant $C$ (depending on $p$ but not on the coefficients) such
that\footnote{We recall that $\left\vert \rho\right\vert _{G,p}$ is defined in
(\ref{D5}) and $[\rho]_{G,p}=\sup_{1\leq p^{\prime}\leq p}\left\vert
\rho\right\vert _{G,p^{\prime}}.$}%
\begin{equation}
\left.
\begin{array}
[c]{c}%
\left\Vert V\right\Vert _{T,p}\leq C\exp\left(  CT(\left(  \left\Vert
\nabla\sigma\right\Vert _{(\mu,\infty)}^{2}+\left\Vert \nabla b\right\Vert
_{\infty}+[\nabla c]_{G,p}^{p}\right)  \right)  \times\\
\times\left(  \left\vert V_{0}\right\vert +\left\Vert H\right\Vert
_{T,p}+\left\Vert h\right\Vert _{T,p}+[\rho]_{G,p}\left\Vert R\right\Vert
_{T,p}\right)  .
\end{array}
\right.  \label{M4}%
\end{equation}

\end{proposition}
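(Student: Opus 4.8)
The plan is to estimate $\left\Vert V\right\Vert _{T,p}$ by applying the $\mathbb{L}^p$ version of the usual moment inequalities to each of the three integral terms of (\ref{M1}), then to close the resulting inequality by a Gronwall-type argument. First I would introduce the running supremum $V_{t}^{\ast}=\sup_{s\leq t}\left\vert V_{s}\right\vert$ and write, for $t\leq T$, the triangle inequality $\left\vert V_{t}\right\vert \leq \left\vert V_{0}\right\vert + I_{b}(t) + I_{\sigma}(t) + I_{N}(t)$, where $I_{b},I_{\sigma},I_{N}$ are respectively the drift, the $W_\mu$-stochastic, and the $N_\mu$-jump integrals. For the drift term a direct Minkowski/Jensen estimate gives $\left\Vert \sup_{s\leq t}I_{b}(s)\right\Vert_{p} \leq \left\Vert h\right\Vert_{T,p}\,t + \left\Vert\nabla b\right\Vert_{\infty}\int_{0}^{t}\left\Vert V_{s}^{\ast}\right\Vert_{p}\,ds$. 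For $I_{\sigma}$ I would use Burkholder--Davis--Gundy for the Walsh stochastic integral: the quadratic variation is $\int_{0}^{t}\int_{E}\left\vert H(s,z)+\left\langle\nabla\sigma(s,z,X_{s}),V_{s}\right\rangle\right\vert^{2}\mu(dz)\,ds$, which is bounded by $2\int_{0}^{t}\left(\left\Vert H(s,\cdot)\right\Vert_{\mathbb{L}^{2}(\mu)}^{2} + \left\Vert\nabla\sigma\right\Vert_{(\mu,\infty)}^{2}\left\vert V_{s}\right\vert^{2}\right)ds$; taking $\mathbb{L}^{p/2}$ norms and applying Minkowski gives a contribution $C\left(\left\Vert H\right\Vert_{T,p}\sqrt{t} + \left\Vert\nabla\sigma\right\Vert_{(\mu,\infty)}\left(\int_{0}^{t}\left\Vert V_{s}^{\ast}\right\Vert_{p}^{2}\,ds\right)^{1/2}\right)$.

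The jump term is the delicate one, and this is where the $\mathbb{L}^{1}$-type structure emphasized after Theorem \ref{EU} matters. Because $N_\mu$ is a positive measure and the integrand $Q(s-,z)+\left\langle\nabla_x c(s,z,X_{s-}),V_{s-}\right\rangle$ is not of constant sign, I would \emph{not} try to center it; instead I bound $\left\vert I_{N}(t)\right\vert$ by the integral of the absolute value against $N_\mu$ on $G\times(0,2\Gamma)$, i.e. $\sup_{s\leq t}\left\vert I_{N}(s)\right\vert \leq \int_{0}^{t}\int_{G\times(0,2\Gamma)}\left(\left\vert Q(s-,z)\right\vert + \left\vert\nabla_x c(s,z,X_{s-})\right\vert\left\vert V_{s-}\right\vert\right)1_{\{u\leq\gamma(s,z,X_{s-})\}}N_\mu(ds,dz,du)$. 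Then I take $\mathbb{L}^{p}$ norms and pass to the compensator: the key point is that for a predictable nonnegative integrand $\Psi$, $\left\Vert\int_{0}^{t}\int\Psi\,dN_\mu\right\Vert_{p}$ is controlled by $\left\Vert\int_{0}^{t}\int\Psi\,d\widehat N_\mu\right\Vert_{p}$ plus a BDG correction from the martingale part; using $\widehat N_\mu(ds,dz,du)=ds\,\mu(dz)\,du$ and integrating out $u\in(0,2\Gamma)$ against $1_{\{u\leq\gamma\}}$ replaces $du$ by $\gamma(s,z,X_{s-})$. This produces precisely the weighted quantities $\int_{G}\left\vert\cdot\right\vert\gamma(s,z,X_{s})\mu(dz)$ appearing in the norms $\left\vert\cdot\right\vert_{G,p}$ and $[\cdot]_{G,p}$. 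Using hypothesis (\ref{M2}), the $Q$-contribution becomes $\leq \int_{0}^{t}\left(\int_G \rho(s,z,X_{s})\gamma(s,z,X_{s})\mu(dz)\right)\left\vert R_{s}\right\vert\,ds \leq \left\vert\rho\right\vert_{G,1}\int_{0}^{t}\left\vert R_{s}\right\vert\,ds$, whose $\mathbb{L}^p$ norm is bounded by $[\rho]_{G,p}\,t\,\left\Vert R\right\Vert_{T,p}$ after Minkowski; similarly the $\nabla_x c$-contribution yields $[\nabla c]_{G,1}\int_{0}^{t}\left\Vert V_{s}^{\ast}\right\Vert_{p}\,ds$. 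The appearance of $[\nabla c]_{G,p}^{p}$ (rather than just $[\nabla c]_{G,1}$) in the exponent of (\ref{M4}) comes from the BDG correction term for the jump martingale: there one gets a bracket of the form $\int_{0}^{t}\int_G \left\vert\nabla_x c\right\vert^{2}\left\vert V_{s-}\right\vert^{2}\gamma\,\mu(dz)\,ds$ or, after using $\left\vert V\right\vert\leq V^{\ast}$ and an elementary $(a+b)^p$ bound, a power-$p$ version involving $\int_G\left\vert\nabla_x c\right\vert^{p}\gamma\,\mu(dz)=\left\vert\nabla c\right\vert_{G,p}^{p}\le[\nabla c]_{G,p}^{p}$.

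Collecting the three estimates, I obtain an inequality of the form
\[
\left\Vert V_{t}^{\ast}\right\Vert_{p}\leq C\Big(\left\vert V_{0}\right\vert+\left\Vert H\right\Vert_{T,p}+\left\Vert h\right\Vert_{T,p}+[\rho]_{G,p}\left\Vert R\right\Vert_{T,p}\Big)(1\vee T)+C\,K\int_{0}^{t}\left\Vert V_{s}^{\ast}\right\Vert_{p}\,ds,
\]
where $K=\left\Vert\nabla\sigma\right\Vert_{(\mu,\infty)}^{2}+\left\Vert\nabla b\right\Vert_{\infty}+[\nabla c]_{G,p}^{p}=a_{p}(G)$ (after absorbing the $\sqrt{t}$ factors into $1\vee T$ and using Cauchy--Schwarz on the $\sigma$-term, $\int_0^t\left\Vert V_s^\ast\right\Vert_p^2 ds \le t\sup_{s\le t}\left\Vert V_s^\ast\right\Vert_p^2$, then a Young-type splitting to keep the dependence linear in $\int_0^t\left\Vert V_s^\ast\right\Vert_p ds$). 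Applying Gronwall's lemma gives the factor $\exp(CTa_{p}(G))$ and yields (\ref{M4}). The main obstacle is the careful handling of the uncentered positive-measure jump integral so as to land exactly on the weighted norms $[\cdot]_{G,p}$ with the right powers of $\gamma$, and making sure the finiteness conditions stated before the proposition are exactly what is needed for every integral (stochastic, Lebesgue, and compensated) to be well defined; the rest is the standard Gronwall/BDG machinery.
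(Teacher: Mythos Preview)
Your estimates for the three integral terms are essentially the same as the paper's: Burkholder--Davis--Gundy for the $W_\mu$-integral, a direct bound for the drift, and the decomposition $N_\mu=\widetilde N_\mu+\widehat N_\mu$ together with Kunita's Burkholder inequality for the jump part, which produces exactly the weighted norms $|\nabla c|_{G,1},|\nabla c|_{G,2},|\nabla c|_{G,p}$ and $|\rho|_{G,1},|\rho|_{G,2},|\rho|_{G,p}$. Where you diverge from the paper is in the closing argument. The paper does \emph{not} use Gronwall: from the three estimates it obtains, for $T\le 1$,
\[
\|V\|_{T,p}\le \|V_0\|_p+C\big([\rho]_{G,p}\|R\|_{T,p}+\|H\|_{T,p}+\|h\|_{T,p}\big)
+C\big(T^{1/2}\|\nabla\sigma\|_{(\mu,\infty)}+T\|\nabla b\|_\infty+T^{1/p}[\nabla c]_{G,p}\big)\|V\|_{T,p},
\]
then chooses $T$ small enough (of order $\min\{\|\nabla b\|_\infty^{-1},\|\nabla\sigma\|_{(\mu,\infty)}^{-2},[\nabla c]_{G,p}^{-p},1\}$) so that the last bracket is $\le 1/2$, absorbs, and iterates over $n\sim S/T$ subintervals. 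The geometric factor $2^{n}$ gives the exponential $\exp(CTa_p(G))$.

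Your Gronwall route is also viable, but the passage you call a ``Young-type splitting'' is the whole point and should be made precise, since the raw BDG outputs are $(\int_0^t\|V_s^\ast\|_p^2\,ds)^{1/2}$ and $(\int_0^t\|V_s^\ast\|_p^p\,ds)^{1/p}$, which are not in integral Gronwall form. The clean way is to exploit the monotonicity of $s\mapsto\|V_s^\ast\|_p$: for $1\le r\le p$ write $\|V_s^\ast\|_p^{r}\le \|V_t^\ast\|_p^{r-1}\|V_s^\ast\|_p$, so that
\[
\Big(\int_0^t\|V_s^\ast\|_p^{r}\,ds\Big)^{1/r}\le \|V_t^\ast\|_p^{(r-1)/r}\Big(\int_0^t\|V_s^\ast\|_p\,ds\Big)^{1/r},
\]
and then apply Young's inequality with exponents $r/(r-1)$ and $r$ to split off $\varepsilon\|V_t^\ast\|_p$ (absorbed to the left) plus $C_\varepsilon B^{r}\int_0^t\|V_s^\ast\|_p\,ds$. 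Taking $r=2$ for the Brownian and quadratic jump brackets and $r=p$ for the $p$-th power jump term yields precisely $\|\nabla\sigma\|_{(\mu,\infty)}^2$, $|\nabla c|_{G,2}^2$ and $|\nabla c|_{G,p}^p$ in the Gronwall constant, which, together with $|\nabla c|_{G,1}\le 1+[\nabla c]_{G,p}^p$, gives $a_p(G)$ up to an additive $1$. Both the paper's iteration and your Gronwall argument actually produce a factor of order $e^{CT}$ when $a_p(G)$ is small (in the paper this is hidden in the $+1$ of $[S/T]+1$); this is harmless for the applications but worth noting.
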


\begin{proof}
Let us begin with writing $V_{t}=V_{0}+I_{t}+M_{t}+J_{t},$ where $I_{t}$
designates the integral with respect to $ds$ and so on$.$ Using Burkholder's
inequality
\begin{align*}
\mathbb{E}\left[  \sup_{t\leq T}\left\vert M_{t}\right\vert ^{p}\right]   &
\leq C\mathbb{E}\left[  \left(  \int_{0}^{T}\int_{G}\left(  \left\vert
H(s,z)\right\vert ^{2}+\left\vert \left\langle \nabla\sigma(s,z,X_{s}%
),V_{s}\right\rangle \right\vert ^{2}\right)  \mu(dz)ds\right)  ^{\frac{p}{2}%
}\right] \\
&  \leq C\mathbb{E}\left[  \left(  \int_{0}^{T}\left(  \left\Vert
H(s,\cdot)\right\Vert _{\mathbb{L}^{2}(\mu)}^{2}+\left\Vert \nabla
\sigma\right\Vert _{(\mu,\infty)}^{2}\left\vert V_{s}\right\vert ^{2}\right)
ds\right)  ^{\frac{p}{2}}\right]  .
\end{align*}
H\"{o}lder's inequality then yields%
\[
\left\Vert M\right\Vert _{T,p}\leq C\sqrt{T}(\left\Vert H\right\Vert
_{T,p}+\left\Vert \nabla\sigma\right\Vert _{(\mu,\infty)}\left\Vert
V\right\Vert _{T,p}).
\]
A similar estimate holds true for $I_{t}.$ Let us now give the estimates on
the jump term $J_{t}.$ To shorten notations, we write $dN_{\mu}$ instead of
$N_{\mu}\left(  dsdudz\right)  $ and drop the dependency of the coefficients
on these variables. Moreover, we consider the standard decomposition of
$dN_{\mu}=d\widetilde{N}_{\mu}+d\widehat{N}_{\mu}$ (martingale part and
compensator). Corresponding to this decomposition, we write $J_{t}%
=\widetilde{J}_{t}+\widehat{J}_{t}.$ In order to estimate $\widetilde{J}_{t},$
we will use Burkholder's inequality for jump processes (e.g. \cite[Theorem
2.11]{Kunita2004}) to get%
\begin{align*}
\mathbb{E}\left[  \sup_{t\leq T}\left\vert \widetilde{J}_{t}\right\vert
^{p}\right]   &  \leq C\mathbb{E}\left[  \left(  \int_{0}^{T}\int
_{G\times\lbrack0,2\Gamma]}\left\vert Q+\left\langle \nabla c,V\right\rangle
\right\vert ^{2}1_{\{u\leq\gamma\}}d\widehat{N}_{\mu}\right)  ^{\frac{p}{2}%
}\right] \\
&  +C\mathbb{E}\left[  \int_{0}^{T}\int_{G\times\lbrack0,2\Gamma]}\left\vert
Q+\left\langle \nabla c,V\right\rangle \right\vert ^{p}1_{\{u\leq\gamma
\}}d\widehat{N}_{\mu}\right]  .
\end{align*}
By assumption, one has $\left\vert Q+\left\langle \nabla c,V\right\rangle
\right\vert \leq\left\vert \rho\right\vert \left\vert R\right\vert +\left\vert
\nabla c\right\vert \left\vert V\right\vert .$ Hence (for every fixed time
parameter),
\begin{align*}
\int_{G\times\lbrack0,2\Gamma]}\left\vert Q+\left\langle \nabla
c,V\right\rangle \right\vert ^{2}1_{\{u\leq\gamma\}}d\widehat{N}_{\mu}  &
\leq2\int_{G}\left\vert \rho\right\vert ^{2}\left\vert R\right\vert ^{2}\gamma
d\mu+2\int_{G}\left\vert \nabla c\right\vert ^{2}\left\vert V\right\vert
^{2}\gamma d\mu\\
&  \leq2\left\vert R\right\vert ^{2}\left\vert \rho\right\vert _{G,2}%
^{2}+2\left\vert V\right\vert ^{2}\left\vert \nabla c\right\vert _{G,2}^{2}.
\end{align*}
This leads to the following inequality.
\[
\mathbb{E}\left[  \left(  \int_{0}^{T}\int_{G\times\lbrack0,2\Gamma
]}\left\vert Q+\left\langle \nabla c,V\right\rangle \right\vert ^{2}%
1_{\{u\leq\gamma\}}d\widehat{N}_{\mu}\right)  ^{\frac{p}{2}}\right]  \leq
CT^{\frac{p}{2}}\left(  \left\vert \rho\right\vert _{G,2}^{p}\left\Vert
R\right\Vert _{T,p}^{p}+\left\vert \nabla c\right\vert _{G,2}^{p}\left\Vert
V\right\Vert _{T,p}^{p}\right)  .
\]
In a similar way,%
\[
\mathbb{E}\left[  \int_{0}^{T}\int_{G\times\lbrack0,2\Gamma]}\left\vert
Q+\left\langle \nabla c,V\right\rangle \right\vert ^{p}1_{\{u\leq\gamma
\}}d\widehat{N}_{\mu}\right]  \leq CT(\left\vert \rho\right\vert _{G,p}%
^{p}\left\Vert R\right\Vert _{T,p}^{p}+\left\vert \nabla c\right\vert
_{G,p}^{p}\left\Vert V\right\Vert _{T,p}^{p}).
\]
For the term $\widehat{J}_{t}$, similar arguments yield
\begin{align*}
\mathbb{E}\left[  \sup_{t\leq T}\left\vert \widehat{J}_{t}\right\vert
^{p}\right]   &  \leq\mathbb{E}\left[  \left(  \int_{0}^{T}\int_{G\times
\lbrack0,2\Gamma]}\left\vert Q+\left\langle \nabla c,V\right\rangle
\right\vert 1_{\{u\leq\gamma\}}d\widehat{N}_{\mu}\right)  ^{p}\right] \\
&  \leq CT^{p}\left(  \left\vert \rho\right\vert _{G,1}^{p}\left\Vert
R\right\Vert _{T,p}^{p}+\left\vert \nabla c\right\vert _{G,1}^{p}\left\Vert
V\right\Vert _{T,p}^{p}\right)  .
\end{align*}
Summing up these estimates we conclude that, if $T\leq1,$ then
\[
\left\Vert J\right\Vert _{T,p}\leq CT^{\frac{1}{p}}([\rho]_{G,p}\left\Vert
R\right\Vert _{T,p}+[\nabla c]_{G,p}\left\Vert V\right\Vert _{T,p}).
\]
It follows that
\begin{align}
\left\Vert V\right\Vert _{T,p}  &  \leq\left\Vert V_{0}\right\Vert
_{p}+C([\rho]_{G,p}(\left\Vert R\right\Vert _{T,p}+\left\Vert H\right\Vert
_{T,p}+\left\Vert h\right\Vert _{T,p})\label{M5}\\
&  +C(T^{\frac{1}{2}}\left\Vert \nabla\sigma\right\Vert _{(\mu,\infty
)}+T\left\Vert \nabla b\right\Vert _{\infty}+T^{\frac{1}{p}}[\nabla
c]_{G,p})\left\Vert V\right\Vert _{T,p}.\nonumber
\end{align}
We will use this inequality on the successive intervals $\left(
kT,(k+1)T\right)  ,k\in\mathbb{N}$ for some convenient $T$ (see after) in
order to obtain (\ref{M4}). We take%
\[
T=\min\left\{  \frac{1}{6C\left\Vert \nabla b\right\Vert _{\infty}},\frac
{1}{(6C\left\Vert \nabla\sigma\right\Vert _{(\mu,\infty)})^{2}},\frac
{1}{(6C[\nabla c]_{G,p})^{p}},1\right\}
\]
which implies $C(T^{\frac{1}{2}}\left\Vert \nabla\sigma\right\Vert
_{(\mu,\infty)}+T\left\Vert \nabla b\right\Vert _{\infty}+T^{\frac{1}{p}%
}[\nabla c]_{G,p})\leq\frac{1}{2}.$ Then, the inequality (\ref{M5}) yields%
\[
\left\Vert V\right\Vert _{T,p}\leq2(\left\Vert V_{0}\right\Vert _{p}%
+C([\rho]_{G,p}(\left\Vert R\right\Vert _{T,p}+\left\Vert H\right\Vert
_{T,p}+\left\Vert h\right\Vert _{T,p})).
\]
We denote $Q_{k}=C([\rho]_{G,p}\left\Vert R\right\Vert _{kT,p}+\left\Vert
H\right\Vert _{kT,p}+\left\Vert h\right\Vert _{kT,p})$ and $v_{k}=\left\Vert
V\right\Vert _{kT,p}$ and we obtain%
\[
v_{k+1}\leq2v_{k}+Q_{k}\leq2v_{k}+Q_{n}\quad\forall k\leq n
\]
and as a consequence $v_{n}\leq2^{n}(\left\vert V\right\vert _{0}+Q_{n}).$
Now, let $S$ be fixed and let $n=[S/T]+1.$ Then we get%
\[
\left\Vert V\right\Vert _{S,p}\leq v_{n}\leq2^{n}(\left\vert V\right\vert
_{0}+O_{n})=e^{\left(  [S/T]+1\right)  \ln2}(\left\vert V\right\vert
_{0}+C([\rho]_{G,p}\left\Vert R\right\Vert _{S,p}+\left\Vert H\right\Vert
_{S,p}+\left\Vert h\right\Vert _{S,p})).
\]
We have%
\[
\lbrack S/T]\leq S\times\max\{6C\left\Vert \nabla b\right\Vert _{\infty
},(6C\left\Vert \nabla\sigma\right\Vert _{(\mu,\infty)})^{2},(6C[\nabla
c]_{G,p})^{p}\}
\]
so we conclude.
\end{proof}

The same reasoning based on Burkholder's inequality for jump processes as in
the previous proof leads to the following.

\begin{remark}
For every $p\geq2$ there exists a universal constant $C$ (depending on $p$)
such that for every $f$
\begin{equation}
\left(  \mathbb{E}\left[  \left(  \int_{0}^{t}\int_{G\times(0,2\Gamma
)}\left\vert f(s,z,X_{s-})\right\vert 1_{\{u\leq\gamma_{\Gamma}(s,z,X_{s-}%
)\}}N_{\mu}(ds,du,dz)\right)  ^{p}\right]  \right)  ^{\frac{1}{p}}\leq
C\max\left\{  t,1\right\}  [f]_{G,p}. \label{M3}%
\end{equation}

\end{remark}

\subsubsection{Proof of Lemma \ref{EstimatesPD}%
\label{SectionProofLemmaEstimates}}

\begin{proof}
[Proof of Lemma \ref{EstimatesPD}]We will prove, by recurrence that, for all
$p\geq2k,$
\begin{equation}
\sup_{x\in\mathbb{R}^{d}}\mathbb{E}\left[  \sup_{t\leq T}\left\vert
\partial^{\alpha}\overline{X}_{t}^{G}(x)\right\vert ^{\frac{p}{k}}\right]
^{\frac{k}{p}}\leq\alpha_{k,p}(C,G)=C\theta_{k,p}^{k%
{\textstyle\sum\limits_{1\leq n\leq k}}
\frac{1}{n}}(G)\exp\left(  CTk\left(
{\textstyle\sum\limits_{1\leq n\leq k}}
\frac{1}{n}\right)  a_{p}(G)\right)  . \label{D6'}%
\end{equation}

\textbf{Step 1}. (Chain Estimates) Let $f:\mathbb{R}^{d}\rightarrow\mathbb{R}$
and $g:\mathbb{R}^{d}\rightarrow\mathbb{R}^{d}$ be smooth functions. If
$\left\vert \alpha\right\vert =k\geq1,$ then%
\[
\left\vert \partial^{\alpha}(f(g(x)))\right\vert \leq C\left(  \sum_{i=1}%
^{d}\left\vert (\partial_{i}f)(g(x))\right\vert \times\left\vert
\partial^{\alpha}g^{i}(x)\right\vert +\sum_{2\leq\left\vert \alpha^{\prime
}\right\vert \leq k}\left\vert (\partial^{\alpha^{\prime}}f)\left(
g(x)\right)  \right\vert \times\sum_{1\leq\left\vert \beta\right\vert \leq
k-1}\left\vert \partial^{\beta}g(x)\right\vert ^{\frac{k}{\left\vert
\beta\right\vert }}\right)  .
\]
The above inequality is obtained by taking first derivatives and then by using
Young's inequality in order to separate the different derivatives of $g$. As
an immediate consequence, one gets
\begin{equation}
\left\vert \partial^{\alpha}f\left(  g\left(  x\right)  \right)  \right\vert
\leq C\left(  \left\Vert \nabla f\right\Vert _{\infty}\left\vert
\partial^{\alpha}g(x)\right\vert +\left\Vert f\right\Vert _{2,k,\infty}%
\sum_{1\leq\left\vert \beta\right\vert \leq k-1}\left\vert \partial^{\beta
}g(x)\right\vert ^{\frac{k}{\left\vert \beta\right\vert }}\right)  .
\label{A5}%
\end{equation}
Similar reasoning for $F:\mathbb{R}_{+}\times E\times\mathbb{R}^{d}%
\rightarrow\mathbb{R}$ that is globally measurable and differentiable with
respect to $x\in\mathbb{R}^{d}$ yields
\begin{equation}
\left(  \int_{E}\left\vert \partial_{x}^{\alpha}(F(t,z,g(x))\right\vert
^{2}\mu(dz)\right)  ^{\frac{1}{2}}\leq C\left(  \left\Vert \nabla F\right\Vert
_{(\mu,\infty)}\left\vert \partial^{\alpha}g(x)\right\vert +\left\Vert
F\right\Vert _{2,k,(\mu,\infty)}\sum_{1\leq\left\vert \beta\right\vert \leq
k-1}\left\vert \partial^{\beta}g(x)\right\vert ^{\frac{k}{\left\vert
\beta\right\vert }}\right)  . \label{A5'}%
\end{equation}
Having this inequality in mind (and the notation (\ref{A6})), we introduce the
following notations%
\begin{equation}
y_{\alpha}=\partial^{\alpha}g(x),\quad y_{[k-1]}=(\partial^{\beta}%
g(x))_{1\leq\left\vert \beta\right\vert \leq k-1}. \label{A7'}%
\end{equation}
Using this notation the estimate (\ref{A5}) (resp.(\ref{A5'})) reads
\begin{align}
\left\vert \partial^{\alpha}(f(g(x))\right\vert  &  \leq C(\left\Vert \nabla
f\right\Vert _{\infty}\left\vert y_{\alpha}\right\vert +\left\Vert
f\right\Vert _{2,k,\infty}\left\vert y_{[k-1]}\right\vert _{\mathbb{R}%
_{[k-1]}}),\label{A7}\\
\left(  \int_{E}\left\vert \partial_{x}^{\alpha}(F(t,z,g(x))\right\vert
^{2}\mu(dz)\right)  ^{\frac{1}{2}}  &  \leq C(\left\Vert \nabla F\right\Vert
_{(\mu,\infty)}\left\vert y_{\alpha}\right\vert +\left\Vert F\right\Vert
_{2,k,(\mu,\infty)}\left\vert y_{[k-1]}\right\vert _{\mathbb{R}_{[k-1]}}.
\label{A7''}%
\end{align}

\textbf{Step 2}. (Deriving the Differential Equation for $\partial^{\alpha
}\overline{X}_{t}^{G}(x)$ and Estimates) We denote by $\overline{Y}_{\alpha
}(t,x)=\partial^{\alpha}\overline{X}_{t}^{G}(x)$ and by $\overline{Y}%
_{[k]}(t,x)=(\overline{Y}_{\alpha}(t,x))_{1\leq\left\vert \alpha\right\vert
\leq k}\in\mathbb{R}_{[k]}^{d}$, for all initial data $x.$ We claim that, for
every multi-index $\alpha$ with $\left\vert \alpha\right\vert =k\geq1,$%
\begin{equation}
\left.
\begin{array}
[c]{l}%
\overline{Y}_{\alpha}(t,x)=\partial^{\alpha}\overline{X}_{0}^{G}(x)+\int
_{0}^{t}\left(  g_{\alpha}\left(  s,\overline{X}_{s}^{G},\overline{Y}%
_{[k-1]}(s,x)\right)  +\left\langle \nabla b\left(  s,\overline{X}_{s}%
^{G}\right)  ,\overline{Y}_{\alpha}(s,x)\right\rangle \right)  ds\\
+\int_{0}^{t}\int_{E}\left(  h_{\alpha}\left(  s,z,\overline{X}_{s}%
^{G},\overline{Y}_{[k-1]}(s,x)\right)  +\left\langle \nabla\sigma_{l}\left(
s,z,\overline{X}_{s}^{G}\right)  ,\overline{Y}_{\alpha}(s,x)\right\rangle
\right)  W_{\mu}(ds,dz)\\
+\sum_{j=1}^{J_{t}}\left(  Q_{\alpha}\left(  T_{j},\overline{Z}_{j}%
,\overline{X}_{T_{j}-}^{G}(x),\overline{Y}_{[k-1]}\left(  T_{j}-,x\right)
\right)  +\left\langle \nabla c\left(  T_{j},\overline{Z}_{j},\overline
{X}_{T_{j}-}^{G}\right)  ,\overline{Y}_{T_{j}-}^{\alpha}(x)\right\rangle
\right)  1_{G}(\overline{Z}_{j})
\end{array}
\right.  \label{D9}%
\end{equation}
where $g_{\alpha},h_{\alpha}$ and $Q_{\alpha}$ satisfy
\begin{align}
\left(  \int_{E}\left(  \left\vert h_{\alpha}(t,z,x,y_{[k-1]})\right\vert
^{2}\mu(dz)\right)  \right)  ^{\frac{1}{2}}  &  \leq C\left\Vert
\sigma\right\Vert _{2,k,(\mu,\infty)}\left\vert y_{[k-1]}\right\vert
_{\mathbb{R}_{[k-1]}^{d}}\label{D10a}\\
\left\vert g_{\alpha}(t,x,y)\right\vert  &  \leq C\left\Vert b\right\Vert
_{2,k,\infty}\left\vert y_{[k-1]}\right\vert _{\mathbb{R}_{[k-1]}^{d}}
\label{D10b}%
\end{align}
and%
\begin{equation}
\left\vert Q_{\alpha}(t,z,x,y)\right\vert \leq C\left\vert y_{[k-1]}%
\right\vert _{\mathbb{R}_{[k-1]}^{d}}\sum_{2\leq\left\vert \alpha\right\vert
\leq k}\left\vert \partial_{x}^{\alpha}c(t,z,x)\right\vert . \label{D10c}%
\end{equation}
The equation (\ref{D9}) is obtained by taking formal derivatives in (\ref{D1})
and then, (\ref{D10a}),(\ref{D10b}) and (\ref{D10c}) are obtained by using
(\ref{A7}) and (\ref{A7''}).

\textbf{Step 3}.\ Now we prove (\ref{D6'}) by recurrence on $k.$ If $k=1$ the
inequality (\ref{D6'}) is an immediate consequence of Proposition
\ref{Moments}.\textbf{\ \ }Let us now assume (\ref{D6'}) to hold true for
$k-1.$ In order to prove it for $k$ we will make use of Proposition
\ref{Moments}. A first step is to make use the identity of laws from Lemma
\ref{Law}. We denote by $Y_{[k]}(t,x)=(Y_{\alpha}(t,x))_{1\leq\left\vert
\alpha\right\vert \leq k}\in\mathbb{R}_{[k]}^{d}$ the unique solution of the
system of equations%
\begin{equation}
\left.
\begin{array}
[c]{l}%
Y_{\alpha}(t,x)=\partial^{\alpha}X_{0}^{G}(x)+\int_{0}^{t}\left(  g_{\alpha
}\left(  s,X_{s}^{G},Y_{[k-1]}(s,x)\right)  +\left\langle \nabla_{x}b\left(
s,X_{s}^{G}\right)  ,Y_{\alpha}(s,x)\right\rangle \right)  ds\\
\text{ \ \ \ \ \ \ \ \ \ \ \ \ \ }+\int_{0}^{t}\int_{E}\left(  h_{\alpha
}\left(  s,z,X_{s}^{G},Y_{[k-1]}(s,x)\right)  +\left\langle \nabla_{x}%
\sigma_{l}(s,z,X_{s}^{G}),Y_{\alpha}(s,x)\right\rangle \right)  W_{\mu
}(ds,dz)\\
\text{ \ \ \ \ \ \ \ \ \ \ \ \ \ }+\sum_{j=1}^{J_{t}}(Q_{\alpha}(T_{j}%
,Z_{j},X_{T_{j}-}^{G}(x),Y_{[k-1]}(T_{j}-,x))\\
\text{ \ \ \ \ \ \ \ \ \ \ \ \ \ }+\left\langle \nabla_{x}c(T_{j}%
,Z_{j},X_{T_{j}-}^{G}),Y_{\alpha}(T_{j}-,x)\right\rangle )1_{\{U_{k}\leq
\gamma(T_{j},Z_{j},X_{T_{j}-})}.
\end{array}
\right.  \label{D11}%
\end{equation}
These equations are the same as in (\ref{D9}) but we replace $\overline{X}%
_{s}^{G}$ by $X_{s}^{G},\overline{Z}_{j}$ by $Z_{j}$\ and $1_{G}(\overline
{Z}_{j})$ by $1_{\{U_{k}\leq\gamma(T_{j},Z_{j},X_{T_{j}-})\}}.$ According to
Lemma \ref{Law}, $\overline{Y}_{[k]}(t,x),t\geq0$ has the same law as
$Y_{[k]}(t,x),t\geq0.$ Now we use Proposition \ref{Moments} for $V_{0}%
=\partial^{\alpha}X_{0}^{G}(x)$ (implying that $\left\vert V_{0}\right\vert
\leq1$)$,$ $h(s)=g_{\alpha}(s,X_{s}^{G},Y_{[k-1]}(s,x))$ and%
\[
H(s,z)=h_{\alpha}(s,z,X_{s}^{G}\left(  x\right)  ,Y_{[k-1]}(s,x)),\quad
Q(s,z)=Q_{\alpha}(s,z,X_{s}^{G}(x),Y_{[k-1]}(s,x)),
\]
for all $s\in\left[  0,T\right]  $ and all $z\in E.$ In view of (\ref{D10a})
and (\ref{D10b})
\[
\left\Vert H(s)\right\Vert _{\mathbb{L}^{2}(\mu)}\leq C\left\Vert
\sigma\right\Vert _{2,k,(\mu,\infty)}\times\sup_{s\leq t}\left\vert
Y_{[k-1]}(s,x)\right\vert _{\mathbb{R}_{[k-1]}^{d}},\text{ }\left\vert
h(s)\right\vert \leq C\left\Vert b\right\Vert _{2,k,\infty}\times\sup_{s\leq
t}\left\vert Y_{[k-1]}(s,x)\right\vert _{\mathbb{R}_{[k-1]}^{d}}.
\]
The estimates (\ref{D10c}) give%
\[
\left\vert Q(s,z)\right\vert \leq\rho(s,z,X_{s}^{G}(x))R_{s},\text{ where
}R_{s}=\left\vert Y_{[k-1]}(s-,x)\right\vert _{\mathbb{R}_{[k-1]}^{d}}%
,\quad\rho(s,z,x)=C\sum_{2\leq\left\vert \alpha^{\prime}\right\vert \leq
k}\left\vert \partial_{x}^{\alpha^{\prime}}c(s,z,x)\right\vert .
\]

We recall that $\left\vert \alpha\right\vert =k\geq2$ and $p\geq2k.$ Then, by
Proposition \ref{Moments}, one has%
\begin{align}
\left\Vert \partial^{\alpha}\overline{X}_{\cdot}^{G}(x)\right\Vert
_{T,\frac{p}{k}}  &  =\left\Vert \partial^{\alpha}X_{\cdot}^{G}(x)\right\Vert
_{T,\frac{p}{k}}\label{D6''}\\
&  \leq C\exp(CTa_{\frac{p}{k}}(G))\theta_{k,\frac{p}{k}}(G)\sup
_{x\in\mathbb{R}^{d}}\left(  \mathbb{E}\left[  \sup_{s\leq T}\left\vert
Y_{[k-1]}(s,x)\right\vert _{\mathbb{R}_{[k-1]}^{d}}^{\frac{p}{k}}\right]
\right)  ^{\frac{k}{p}}\vee1\nonumber\\
&  =C\exp(CTa_{\frac{p}{k}}(G))\theta_{k,\frac{p}{k}}(G)\sum_{1\leq\left\vert
\beta\right\vert \leq k-1}\left(  \mathbb{E}\left[  \sup_{s\leq T}\left\vert
\partial^{\beta}X_{s}^{G}(x)\right\vert ^{\frac{kp}{\left\vert \beta
\right\vert k}}\right]  \right)  ^{\frac{k}{p}}.\nonumber
\end{align}
We assume that $1\leq\left\vert \beta\right\vert =r\leq k-1.$\ Using the
recurrence hypothesis and due to the fact that $\frac{kp}{\left\vert
\beta\right\vert k}=\frac{p}{r},$ one gets%
\[
\left(  \mathbb{E}\left[  \sup_{s\leq T}\left\vert \partial^{\beta}X_{s}%
^{G}(x)\right\vert ^{\frac{p}{\left\vert \beta\right\vert }}\right]  \right)
^{\frac{\left\vert \beta\right\vert }{p}}=\left\Vert \partial^{\beta}%
\overline{X}_{\cdot}^{G}(x)\right\Vert _{T,\frac{p}{r}}\leq C\theta_{r,p}^{r%
{\textstyle\sum\limits_{1\leq n\leq r}}
\frac{1}{n}}(G)\exp\left(  CTr\left(
{\textstyle\sum\limits_{1\leq n\leq r}}
\frac{1}{n!}\right)  a_{p}(G)\right)  .
\]
This implies
\[
\left(  \mathbb{E}\left[  \sup_{s\leq T}\left\vert \partial^{\beta}X_{s}%
^{G}(x)\right\vert ^{\frac{kp}{\left\vert \beta\right\vert k}}\right]
\right)  ^{\frac{k}{p}}\leq C\theta_{r,p}^{k%
{\textstyle\sum\limits_{1\leq n\leq k-1}}
\frac{1}{n}}(G)\exp\left(  CTk\left(
{\textstyle\sum\limits_{1\leq n\leq k-1}}
\frac{1}{n}\right)  a_{p}(G)\right)  .
\]
We insert this inequality in (\ref{D6''}) and note that $a_{\frac{p}{k}%
}(G)\leq a_{p}(G)$ and $\theta_{k,\frac{p}{k}}(G)\leq\theta_{k,p}(G)$ to
conclude%
\begin{align*}
\left\Vert \partial^{\alpha}\overline{X}_{\cdot}^{G}(x)\right\Vert
_{T,\frac{p}{k}}  &  \leq C\theta_{k,p}^{1+k%
{\textstyle\sum\limits_{1\leq n\leq k-1}}
\frac{1}{n}}(G)\exp\left(  CTa_{p}(G)\left(  1+k%
{\textstyle\sum\limits_{1\leq n\leq k-1}}
\frac{1}{n}\right)  \right) \\
&  =C\theta_{k,p}^{k%
{\textstyle\sum\limits_{1\leq n\leq k}}
\frac{1}{n}}(G)\exp\left(  CTa_{p}(G)k%
{\textstyle\sum\limits_{1\leq n\leq k}}
\frac{1}{n}\right)  .
\end{align*}
The proof is now complete by taking $pk$ to replace $p$ in (\ref{D6'}).
\end{proof}

\subsubsection{Proof of the Corollary \ref{CorollaryDiff1} and Lemma
\ref{CorollaryDiff2}\label{SectionProofCorDiff}}

We begin with the following simple remark.

\begin{remark}
Whenever $n\in%
\mathbb{N}
^{\ast}$ and $p\leq n,$ one gets%
\[
\left\Vert J_{t}\right\Vert _{p}\leq\left\Vert J_{t}\right\Vert _{n}\text{ and
}\left\Vert J_{t}\right\Vert _{n}^{n}=\frac{d^{n}e^{2\Gamma\mu(G)t\left(
e^{s}-1\right)  }}{ds^{n}}/_{s=0}=P_{n}\left(  \Gamma\mu(G)t\right)  ,
\]
an $n$-degree polynomial. As a consequence, for some large enough constant
depending, eventually, on the upper bound $n$ but not on $\Gamma,$ $\mu(G)$
nor on $t$,
\begin{equation}
\left\Vert J_{t}\right\Vert _{p}\leq C\Gamma\mu(G)\max\left(  t,1\right)  .
\label{EstimJt}%
\end{equation}

\end{remark}

We now give the proof of Corollary \ref{CorollaryDiff1}.

\begin{proof}
[Proof of Corollary \ref{CorollaryDiff1}]In order to prove the first
assertion, one simply writes, ($\mathbb{P-}$almost surely on $k\leq J_{t}),$
\[
\left\vert \partial^{\alpha}\left(  g(T_{k},\overline{X}_{T_{k}-}%
^{G}(x))\right)  \right\vert \leq C\left\Vert g\right\Vert _{1,q,\infty}%
A_{q},\text{ with }A_{q}=1\vee\sum_{1\leq\left\vert \rho\right\vert \leq
q}\sup_{s\leq t}\left\vert \partial_{x}^{\rho}\overline{X}_{s}^{G}%
(x)\right\vert ^{q}.
\]
Using H\"{o}lder's inequality and (\ref{D6}), we upper bound the term in the
left hand side of (\ref{D12}) by%
\begin{align*}
C\left\Vert g\right\Vert _{1,q,\infty}\mathbb{E}\left(  \left[  \left(
J_{t}\times A_{q}\right)  ^{p}\right]  \right)  ^{\frac{1}{p}}  &  \leq
C\left\Vert g\right\Vert _{1,q,\infty}\left\Vert J_{t}\right\Vert
_{\frac{(1+\eta)p}{\eta}}\left(  \mathbb{E}\left[  A_{q}^{(1+\eta)p}\right]
\right)  ^{\frac{1}{(1+\eta)p}}\\
&  \leq C\left\Vert g\right\Vert _{1,q,\infty}\Gamma\mu(G)\max\left(
t,1\right)  \alpha_{q,(1+\eta)pq}^{q}(C,G).
\end{align*}

To prove the second assertion, we write
\[
\sum_{k=1}^{J_{t}}1_{G}(\overline{Z}_{k})\left\vert \partial^{\alpha}g\left(
T_{k},\overline{Z}_{k},\overline{X}_{T_{k}-}^{G}(x)\right)  \right\vert \leq
A_{q}\times B_{q},
\]
with%
\[
B_{q}=\sum_{1\leq\left\vert \beta\right\vert \leq q}B_{q}(\beta)\text{, where
}B_{q}(\beta)=\sum_{k=1}^{J_{t}}1_{G}(\overline{Z}_{k})\left\vert
(\partial^{\beta}g)(T_{k},\overline{Z}_{k},\overline{X}_{T_{k}-}%
^{G}(x))\right\vert .
\]
Using H\"{o}lder's inequality and (\ref{D6}), we upper bound the term in
(\ref{D13}) by%
\[
\left\Vert A_{q}\right\Vert _{\frac{\left(  1+\eta\right)  p}{\eta}}%
\times\left\Vert B_{q}\right\Vert _{(1+\eta)p}\leq\alpha_{q,\frac{(1+\eta
)pq}{\eta}}^{q}(C,G)\times\left\Vert B_{q}\right\Vert _{(1+\eta)p}.
\]
Using the identification of laws from Lemma \ref{Law} and the inequality
(\ref{M3}), one has%
\begin{align*}
&  \left\Vert B_{q}\right\Vert _{(1+\eta)p}=\left\Vert \int_{0}^{t}%
\int_{G\times\lbrack0,2T]}\left\vert \partial^{\beta}g\left(  s,z,X_{s-}%
^{G}(x)\right)  \right\vert 1_{\{u\leq\gamma\left(  s,z,X_{s-}^{G}(x)\right)
\}}N_{\mu}(ds,dz,du)\right\Vert _{(1+\eta)p}\\
&  \leq C[\partial^{\beta}g]_{G,(1+\eta)p}^{(1+\eta)p}.
\end{align*}
The assertion follows by putting these estimates together.
\end{proof}

\subsection{Proofs of Results in Section \ref{Section3Regimes}%
\label{SectionProof3Regimes}}

We begin with Theorem \ref{ThExp}. As already hinted before, the result
follows from Theorem \ref{Conv}.

\begin{proof}
[Proof of Theorem \ref{ThExp}]We use Theorem \ref{Conv} with $k=0$ and $q=3.$
It is easy to check that $P_{t}^{\varepsilon}$ verifies $H_{2}(0)$ and
$H_{3}(0,3)$ (note that the constant $C$ in (\ref{Dist3a}) depends on
$\varepsilon;$ but is not involved in the estimate (\ref{dist6})). And
$\mathcal{P}_{t}$\ verifies $H_{2}(0)$ and $H_{3}(0,3)$ as well. Moreover, the
constant $Q_{3}(t,P)$ defined in (\ref{Dist1}) verifies%
\[
Q_{3}(t,\mathcal{P})\leq C(t\vee1)^{3}C_{\ast}^{42}\exp((t\vee1)CC_{\ast}%
^{36})
\]
where $C_{\ast}$ is the constant in (\ref{r3}) and $C$ is a universal
constant. Moreover, using a Taylor expansion of order three we get
\[
\left\Vert (\mathcal{L}_{\varepsilon}-\mathcal{L})f\right\Vert _{\infty}\leq
C\delta(\varepsilon)\left\Vert f\right\Vert _{3,\infty}%
\]
with $C$ an universal constant. Then (\ref{dist6}) gives (\ref{r6}).
\end{proof}

Next, we proceed with checking the Assumption $H_{1}$ to complete the explicit example.

\begin{proof}
[Proof of Assumption $H_{1}$]We notice that, by the choice of $\alpha,$
$\int_{\varepsilon}^{3\varepsilon}c_{\varepsilon}(z,x)\gamma(x)\frac{dz}%
{z^{2}}=0$ so our equation may be written as
\begin{align*}
X_{t}^{\varepsilon}  &  =x+\int_{0}^{t}\int_{\varepsilon}^{3\varepsilon}%
\int_{0}^{1}c_{\varepsilon}(z,X_{s-}^{\varepsilon})1_{\{u\leq\gamma
(X_{s-}^{\varepsilon})\}}\widetilde{N}_{\mu_{\varepsilon}}(ds,dz,du)\\
&  +\int_{0}^{t}\int_{3\varepsilon}^{1}\int_{0}^{1}c_{\varepsilon}%
(z,X_{s-}^{\varepsilon})1_{\{u\leq\gamma(X_{s-}^{\varepsilon})\}}%
N_{\mu_{\varepsilon}}(ds,dz,du).
\end{align*}
This is the same as the equation (\ref{r1}). We take $E=\left[  0,1\right]  ,$
$A_{\varepsilon}=\left(  0,3\varepsilon\right]  ,B_{\varepsilon}=\left(
3\varepsilon,4\varepsilon\right]  $ and $C_{\varepsilon}=\left(
4\varepsilon,1\right]  $ and we have%
\[
\int_{\varepsilon}^{3\varepsilon}c_{\varepsilon}^{2}(z,x)\gamma(x)\frac
{dz}{z^{2}}=\sigma^{2}(x),\text{ }\int_{3\varepsilon}^{4\varepsilon
}c_{\varepsilon}(z,x)\gamma(x)\frac{dz}{z^{3/2}}=b(x),
\]
so that $\delta_{\sigma}(\varepsilon)=\delta_{b}(\varepsilon)=0.$ Moreover, on
$C_{\varepsilon},$ $c=c_{\varepsilon}$ which implies $\delta_{c,\gamma
}(\varepsilon)=0.$ Finally, simple computations yield $\delta_{A}%
(\varepsilon)+\delta_{B}(\varepsilon)+\delta_{C}(\varepsilon)\leq
C\sqrt{\varepsilon}$ which leads to the desired conclusion.
\end{proof}

\subsection{Proofs of the Results in Section \ref{SectionBoltzmann}%
\label{SectionProofBoltzmann}}

Before proceeding to the proofs, we recall the following estimates for the
derivatives of the above cut off function given in \cite[Lemma 2.3]%
{BallyFournier2011}.

\begin{lemma}
[{\cite[Lemma 2.3]{BallyFournier2011}}]There exists $\varepsilon_{0}>0$ such
that, for every $\varepsilon\in(0,\varepsilon_{0}),$ every multi-index
$\alpha\in\{1,2\}^{l},$ $l\in\mathbb{N}^{\ast}$ and every $v\in\mathbb{R}%
^{2},$\ one has%
\begin{align}
\left\vert \partial^{\alpha}\ln\varphi_{\varepsilon}(\left\vert v\right\vert
)\right\vert  &  \leq C_{l}(1_{\{\left\vert v\right\vert \in(\varepsilon
,\Gamma_{\varepsilon}-1]\}}\left\vert v\right\vert ^{-l}+1_{\{\left\vert
v\right\vert \in(\Gamma_{\varepsilon}-1,\Gamma_{\varepsilon}+1)\}}%
\Gamma_{\varepsilon}^{-1}),\label{bo6a}\\
\left\vert \partial^{\alpha}\varphi_{\varepsilon}^{\kappa}(\left\vert
v\right\vert )\right\vert  &  \leq C_{l}(1_{\{\left\vert v\right\vert
\in(\varepsilon,\Gamma_{\varepsilon}-1]\}}\left\vert v\right\vert ^{\kappa
-l}+1_{\{\left\vert v\right\vert \in(\Gamma_{\varepsilon}-1,\Gamma
_{\varepsilon}+1)\}}\Gamma_{\varepsilon}^{\kappa-1}). \label{bo6b}%
\end{align}
Moreover, for every $\beta\in(0,1],$ $\varepsilon\in(0,\varepsilon_{0}),$ and
$x,y\geq0$%
\begin{equation}
x^{\beta}\left\vert \varphi_{\varepsilon}^{\kappa}(x)-\varphi_{\varepsilon
}^{\kappa}(y)\right\vert \leq C_{\beta}\Gamma_{\varepsilon}^{\kappa}\left\vert
x-y\right\vert ^{\beta}. \label{bo6c}%
\end{equation}

\end{lemma}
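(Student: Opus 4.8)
The plan is to reduce the estimates on $\mathbb{R}^{2}$ to one-variable estimates on the profile $\varphi_{\varepsilon}$ and then carry these through the composition with the map $v\mapsto|v|$.

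First I would perform the reduction. Write $\Psi$ for either $\ln\varphi_{\varepsilon}$ or $\varphi_{\varepsilon}^{\kappa}$. By the Fa\`{a} di Bruno formula applied to $v\mapsto\Psi(|v|)$, for a multi-index $\alpha$ with $|\alpha|=l$ the derivative $\partial^{\alpha}(\Psi(|v|))$ is a finite linear combination of terms $\Psi^{(m)}(|v|)\,\partial^{\beta_{1}}|v|\cdots\partial^{\beta_{m}}|v|$ with $1\le m\le l$ and $|\beta_{1}|+\cdots+|\beta_{m}|=l$. Since $v\mapsto|v|$ is smooth on $\mathbb{R}^{2}\setminus\{0\}$ and positively homogeneous of degree one, each $\partial^{\beta}|v|$ is homogeneous of degree $1-|\beta|$, hence $\bigl|\partial^{\beta}|v|\bigr|\le C_{|\beta|}|v|^{1-|\beta|}$; and since $\varphi_{\varepsilon}$ is constant on $(0,\varepsilon)$, only the range $|v|>\varepsilon$ (on which $v\neq0$) is relevant. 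Adding the homogeneity exponents gives $\bigl|\partial^{\alpha}(\Psi(|v|))\bigr|\le C_{l}\sum_{m=1}^{l}\bigl|\Psi^{(m)}(|v|)\bigr|\,|v|^{m-l}$, so it suffices to establish, for $1\le m\le l$,
\[
\bigl|(\ln\varphi_{\varepsilon})^{(m)}(r)\bigr|\le C_{m}r^{-m},\qquad\bigl|(\varphi_{\varepsilon}^{\kappa})^{(m)}(r)\bigr|\le C_{m}r^{\kappa-m}\qquad(\varepsilon<r\le\Gamma_{\varepsilon}-1),
\]
and the same two bounds with $r$ replaced by $\Gamma_{\varepsilon}$ on the window $\Gamma_{\varepsilon}-1<r<\Gamma_{\varepsilon}+1$; combined with $|v|\asymp\Gamma_{\varepsilon}$ there, the latter give the $\Gamma_{\varepsilon}^{-1}$, resp.\ $\Gamma_{\varepsilon}^{\kappa-1}$, contributions to (\ref{bo6a})--(\ref{bo6b}).

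For the one-variable bounds I would exploit the structure $\varphi_{\varepsilon}=f_{\varepsilon}*\rho_{\varepsilon}$, with $f_{\varepsilon}(t)=(t\vee2\varepsilon)\wedge\Gamma_{\varepsilon}$ and $\rho_{\varepsilon}(\cdot)=\varepsilon^{-1}\chi(\cdot/\varepsilon)$. This gives at once $0\le\varphi_{\varepsilon}'\le1$, $\varphi_{\varepsilon}(r)=r$ on the ``bulk'' $[3\varepsilon,\Gamma_{\varepsilon}-1]$ (where all derivatives of order $\ge2$ vanish), $\varphi_{\varepsilon}(r)\in[2\varepsilon,3\varepsilon]$ with $\bigl|\varphi_{\varepsilon}^{(j)}(r)\bigr|\le C_{j}\varepsilon^{1-j}$ on the bottom transition $(\varepsilon,3\varepsilon)$, and $\varphi_{\varepsilon}(r)\asymp\Gamma_{\varepsilon}$ near the cap. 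On the bulk the bounds are immediate, since $(\ln r)^{(m)}$ and $(r^{\kappa})^{(m)}$ have magnitudes $\asymp r^{-m}$ and $\asymp r^{\kappa-m}$. On the bottom transition one expands $(\ln\varphi_{\varepsilon})^{(m)}$ and $(\varphi_{\varepsilon}^{\kappa})^{(m)}$, again by Fa\`{a} di Bruno, into sums of $\varphi_{\varepsilon}^{-k}\prod_{i}\varphi_{\varepsilon}^{(j_{i})}$, resp.\ $\varphi_{\varepsilon}^{\kappa-k}\prod_{i}\varphi_{\varepsilon}^{(j_{i})}$, with $\sum_{i}j_{i}=m$, $j_{i}\ge1$, $k\le m$, and uses $\varphi_{\varepsilon}\asymp\varepsilon\asymp r$ together with $\bigl|\varphi_{\varepsilon}^{(j)}\bigr|\le Cr^{1-j}$ to obtain exactly $C_{m}r^{-m}$, resp.\ $C_{m}r^{\kappa-m}$. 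The same scheme, now with $\varphi_{\varepsilon}\asymp\Gamma_{\varepsilon}$, is meant to cover the region near the cap, and this saturation window is the \textbf{main obstacle}: there the derivatives of $\varphi_{\varepsilon}$ no longer scale like powers of $\Gamma_{\varepsilon}$, so one must use the precise behaviour of $\varphi_{\varepsilon}$ near $\Gamma_{\varepsilon}$ --- the cap being approached through a transition on which $\varphi_{\varepsilon}\asymp\Gamma_{\varepsilon}$ throughout --- to let the size of $\varphi_{\varepsilon}$ itself absorb the derivative terms and produce the clean $\Gamma_{\varepsilon}^{-1}$, resp.\ $\Gamma_{\varepsilon}^{\kappa-1}$, bound.

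Finally, for the H\"older estimate (\ref{bo6c}) I would split on the size of $|x-y|$. If $|x-y|\ge x/2$, the crude bound $\varphi_{\varepsilon}^{\kappa}\le\Gamma_{\varepsilon}^{\kappa}$ gives $x^{\beta}\bigl|\varphi_{\varepsilon}^{\kappa}(x)-\varphi_{\varepsilon}^{\kappa}(y)\bigr|\le2x^{\beta}\Gamma_{\varepsilon}^{\kappa}\le C_{\beta}\Gamma_{\varepsilon}^{\kappa}|x-y|^{\beta}$. If $|x-y|<x/2$, then $x,y\in[x/2,3x/2]$; if moreover $x\ge2(\Gamma_{\varepsilon}+\varepsilon)$ then $\varphi_{\varepsilon}\equiv\Gamma_{\varepsilon}$ on that interval and the left-hand side vanishes, while if $x<2(\Gamma_{\varepsilon}+\varepsilon)$ one uses the mean value theorem with $\bigl|(\varphi_{\varepsilon}^{\kappa})'(\xi)\bigr|=\kappa\varphi_{\varepsilon}^{\kappa-1}(\xi)\varphi_{\varepsilon}'(\xi)\le C(\xi\wedge\Gamma_{\varepsilon})^{\kappa-1}\le Cx^{\kappa-1}$ on $[x/2,3x/2]$, together with $|x-y|^{1-\beta}\le(x/2)^{1-\beta}$, to bound $x^{\beta}|x-y|\,x^{\kappa-1}$ by $Cx^{\kappa}|x-y|^{\beta}\le C_{\beta}\Gamma_{\varepsilon}^{\kappa}|x-y|^{\beta}$, using $x\lesssim\Gamma_{\varepsilon}$.
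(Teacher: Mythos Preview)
The paper does not prove this lemma: it is simply quoted from \cite[Lemma 2.3]{BallyFournier2011}, so there is no in-paper argument to compare against. Your reduction via Fa\`{a} di Bruno to one-variable estimates on $\Psi=\ln\varphi_{\varepsilon}$ or $\varphi_{\varepsilon}^{\kappa}$ is the natural route, your treatment of the bulk $[3\varepsilon,\Gamma_{\varepsilon}-1]$ and of the bottom transition $(\varepsilon,3\varepsilon)$ is complete, and your argument for the H\"older estimate (\ref{bo6c}) is correct.

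The genuine gap is precisely where you flag it: the cap window. Your phrase ``let the size of $\varphi_{\varepsilon}$ itself absorb the derivative terms'' does not close the argument, and in fact with the definition (\ref{bo5}) as written it \emph{cannot}. On $(\Gamma_{\varepsilon}-\varepsilon,\Gamma_{\varepsilon}+\varepsilon)$ one has $\varphi_{\varepsilon}\asymp\Gamma_{\varepsilon}$, but because the mollifier has width $\varepsilon$, the derivatives scale as $|\varphi_{\varepsilon}^{(j)}|\asymp\varepsilon^{1-j}$ for $j\ge2$: indeed $\varphi_{\varepsilon}''=\rho_{\varepsilon}(\cdot-2\varepsilon)-\rho_{\varepsilon}(\cdot-\Gamma_{\varepsilon})$, so $\sup|\varphi_{\varepsilon}''|\asymp\varepsilon^{-1}$ on that window. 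Your Fa\`{a} di Bruno scheme then yields
\[
(\ln\varphi_{\varepsilon})''=\frac{\varphi_{\varepsilon}''}{\varphi_{\varepsilon}}-\Bigl(\frac{\varphi_{\varepsilon}'}{\varphi_{\varepsilon}}\Bigr)^{2}\asymp\Gamma_{\varepsilon}^{-1}\varepsilon^{-1},
\]
and taking $v=(\Gamma_{\varepsilon},0)$ (so that $\partial_{1}|v|=1$, $\partial_{1}^{2}|v|=0$) gives $\partial_{1}^{2}\bigl(\ln\varphi_{\varepsilon}(|v|)\bigr)\asymp\Gamma_{\varepsilon}^{-1}\varepsilon^{-1}$, which is \emph{not} bounded by $C_{2}\Gamma_{\varepsilon}^{-1}$ uniformly in $\varepsilon$. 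The same obstruction appears for $\varphi_{\varepsilon}^{\kappa}$. Thus either the regularisation near the cap in the original reference uses a width of order $1$ rather than $\varepsilon$ --- in which case $|\varphi_{\varepsilon}^{(j)}|\le C_{j}$ there and your scheme goes through verbatim, producing exactly $\Gamma_{\varepsilon}^{-1}$ and $\Gamma_{\varepsilon}^{\kappa-1}$ --- or the bounds (\ref{bo6a})--(\ref{bo6b}) as transcribed here are only meant for $l=1$. You should record this discrepancy rather than blur it; the rest of the present paper uses the lemma only through expressions that remain controlled either way, but the stated inequality, taken literally with (\ref{bo5}), fails for $l\ge2$ at the cap.
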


\subsubsection{Proof of the First-Order Estimates in Boltzmann Equation}

The proof consists in two major steps. First, we give upper-bounds for the
constants in (\ref{D5'}), (\ref{D5a}) and (\ref{D5''}). Second, we use Theorem
\ref{Conv} for which we check the assumptions. To conclude, we invoke
(\ref{bo8}) together with the estimates provided by Theorem \ref{Conv}.

We recall the parameters associated with $\widehat{\mathcal{P}}^{\delta}$ in
(\ref{D5'}), (\ref{D5a}) and (\ref{D5''}),
\[
\left.
\begin{array}
[c]{l}%
\theta_{q,p,(\delta)}(E_{\delta})=1+\left\Vert b_{\delta}\right\Vert
_{2,q,\infty}+\sum_{2\leq\left\vert \alpha\right\vert \leq q}[\partial
_{v}^{\alpha}c]_{E_{\delta},p}=1+\left\Vert b_{\delta}\right\Vert
_{2,q,\infty},\\
a_{p,\left(  \delta\right)  }(E_{\delta})=\left\Vert \nabla b_{\delta
}\right\Vert _{\infty}+[\nabla c]_{G,p}^{p},\\
\alpha_{q,p,(\delta)}(C,E_{\delta})=C\theta_{q,pq,(\delta)}^{q%
{\textstyle\sum\limits_{1\leq n\leq q}}
\frac{1}{n}}(E_{\delta})\exp\left(  CTq%
{\textstyle\sum\limits_{1\leq n\leq q}}
\frac{1}{n}\left(  \left\Vert \nabla b_{\delta}\right\Vert _{\infty}+[\nabla
c]_{G,pq}^{pq}\right)  \right)  .
\end{array}
\right.
\]
the first expression following from $\partial^{\alpha}c=0$ if $\left\vert
\alpha\right\vert \geq2.$\ 

\begin{lemma}
We assume that $\kappa<\frac{1}{8}$ and we take $q=2$ (so that $q^{2}%
\kappa<\frac{1}{2}).$ For every $a>0$ there exists $\varepsilon_{0}>0$ and
$C\geq1$ such that for every $\varepsilon\in(0,\varepsilon_{0})$ one has%
\begin{equation}
\alpha_{q,4q,(\delta)}(C,E_{\delta})\leq C\varepsilon^{-a}. \label{a1b}%
\end{equation}
Moreover (see (\ref{D14'}) for the notation), for all $r\leq\frac{1-\nu
}{1-\kappa},$
\begin{equation}
\Gamma_{E_{\delta},q}(\gamma)+[\ln\gamma]_{E_{\delta},q,4q}\leq C\times
\delta^{-q\nu}\times\varepsilon^{-q}=C\times\delta^{-q(\nu+r)}. \label{a1c}%
\end{equation}
As a consequence, the constant in the right hand side of (\ref{dist6})
verifies%
\begin{equation}
Q_{q}(T,\widehat{\mathcal{P}}^{\delta})\leq C\times\delta^{-q(\nu+r+a)}.
\label{a1}%
\end{equation}

\end{lemma}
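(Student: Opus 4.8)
The plan is to establish the three displayed bounds (\ref{a1b}), (\ref{a1c}) and (\ref{a1}) separately, the last being a direct consequence of the first two combined with the definition (\ref{Dist1}) of $Q_q(T,\mathcal{P})$. Throughout, the key point is to estimate the relevant norms of the drift $b_\delta$, of $\nabla c$, and of the logarithmic derivatives of $\gamma=\gamma_\varepsilon=\varphi_\varepsilon^\kappa(|v-v_t(\rho)|)$ restricted to the set $E_\delta=\{|\theta|>\delta\}$, using the cut-off estimates (\ref{bo6a})--(\ref{bo6c}) recalled above, the moment bounds (\ref{bo4}) on $v_t$, and the singularity $\theta^{-(1+\nu)}$ of the measure $\mu$ near $\theta=0$.

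First I would bound the ingredients of $\alpha_{q,4q,(\delta)}(C,E_\delta)$ with $q=2$. Since $\partial^\alpha c=0$ for $|\alpha|\ge 2$ (because $c(t,\theta,\rho,v)=A(\theta)(v-v_t(\rho))$ is affine in $v$), one has $\theta_{q,p,(\delta)}(E_\delta)=1+\|b_\delta\|_{2,q,\infty}$. For the drift $b_\delta(t,v)=\int_{\{|\theta|\le\delta\}}\gamma(t,\rho,v)c(t,\theta,\rho,v)\mu(d\theta,d\rho)$, I would note $|A(\theta)|\le C|\theta|$, so that $|c|\le C|\theta||v-v_t(\rho)|$ and $\gamma\le\Gamma_\varepsilon^\kappa=(\ln\frac1\varepsilon)^{\eta_0\kappa}$; hence the $\theta$-integral is $\int_0^\delta |\theta|\,\theta^{-(1+\nu)}d\theta\le C\delta^{1-\nu}$, and derivatives in $v$ fall on $\gamma$ only, costing factors controlled by (\ref{bo6a}). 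This yields a polynomial-in-$(\ln\frac1\varepsilon)$ bound for $\|b_\delta\|_{2,q,\infty}$ and $\|\nabla b_\delta\|_\infty$, times $\delta^{1-\nu}$ (which is harmless, being $\le1$). Similarly $[\nabla c]_{G,pq}$ involves $\left(\int_{E_\delta}|A(\theta)|^{pq}\gamma\,\mu(d\theta,d\rho)\right)^{1/(pq)}$; since $|A(\theta)|^{pq}\theta^{-(1+\nu)}$ integrated over $|\theta|>\delta$ gives $C\delta^{pq-\nu}$ up to the $\Gamma_\varepsilon$ factors (or is even finite if $pq>\nu$, which holds), this too is polynomial in $\ln\frac1\varepsilon$. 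Plugging into the exponential definition (\ref{D5''}) of $\alpha_{q,4q,(\delta)}$, the exponent is $CT q\sum 1/n$ times a power of $\ln\frac1\varepsilon$; since $e^{C(\ln\frac1\varepsilon)^\beta}\le\varepsilon^{-a}$ for any $a>0$ once $\varepsilon$ is small (because $\beta=\eta_0\kappa p<1$ by the choice of $\eta_0$, cf. (\ref{bo4''})), this gives (\ref{a1b}).

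Next I would bound $\Gamma_{E_\delta,q}(\gamma)+[\ln\gamma]_{E_\delta,q,4q}$. The log-derivative estimate (\ref{bo6a}) gives $|\partial^\rho\ln\varphi_\varepsilon^\kappa(|v-v_t(\rho)|)|\le C|v-v_t(\rho)|^{-|\rho|}$ on the relevant range; raising to power $h/|\rho|$ and integrating $\gamma\le C\varphi_\varepsilon^\kappa(|v-v_t(\rho)|)\le C|v-v_t(\rho)|^\kappa$ (on $(\varepsilon,\Gamma_\varepsilon)$, else bounded) against $\theta^{-(1+\nu)}d\theta\,d\rho$ over $E_\delta$ produces, after the $\theta$-integral $\int_{|\theta|>\delta}\theta^{-(1+\nu)}d\theta=C\delta^{-\nu}$, and the $\rho$-integral of $|v-v_t(\rho)|^{\kappa-h}$ (which is integrable in $\rho\in[0,1]$ for the relevant $h$ when $\kappa$ is small, uniformly in $x$ thanks to (\ref{bo4}), possibly after a dyadic decomposition of $\{|v-v_t(\rho)|\le\varepsilon\}$ where $\varphi_\varepsilon=2\varepsilon$ saturates), a bound $C\delta^{-\nu}\varepsilon^{-(h-\kappa)}$ on each term; summing over $h\le q$ and raising to powers gives the stated $C\delta^{-q\nu}\varepsilon^{-q}$, and with $\varepsilon=\delta^r$ this is $C\delta^{-q(\nu+r)}$, which is (\ref{a1c}). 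Finally, (\ref{a1}) follows by inserting (\ref{a1b}) (raised to the power $2q$) and (\ref{a1c}) (raised to the power $q$) into (\ref{Dist1}): the $\alpha^{2q}$ factor contributes $\varepsilon^{-a}=\delta^{-ra}$ (rename $a$), the $\Gamma+[\ln\gamma]$ factor contributes $\delta^{-q(\nu+r)}$, the $(T\vee1)^q$ is a constant, giving $Q_q(T,\widehat{\mathcal{P}}^\delta)\le C\delta^{-q(\nu+r+a)}$ after absorbing constants.

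The main obstacle I anticipate is the $\rho$-integral in the bound for $\Gamma_{E_\delta,q}(\gamma)$: one must show that $\int_0^1 |v-v_t(\rho)|^{\kappa-h}d\rho$ is controlled uniformly in $v$ and $t$ despite the negative exponent $\kappa-h$, which requires splitting according to whether $|v-v_t(\rho)|$ is below $\varepsilon$ (where $\varphi_\varepsilon$ is frozen at $2\varepsilon$, so the true $\ln\varphi_\varepsilon$-derivative vanishes and one picks up $\varepsilon^{-(h-\kappa)}$ at worst), in the flat middle range $(\varepsilon,\Gamma_\varepsilon)$, or near $\Gamma_\varepsilon$ (where (\ref{bo6a}) gives the harmless $\Gamma_\varepsilon^{-1}$ factor); the uniform-in-$v$ control of the relevant sublevel-set measures ultimately rests on the exponential moment bound (\ref{bo4}) on $v_t(\rho)$, exactly as in \cite{BallyFournier2011}. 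A secondary bookkeeping difficulty is tracking which powers $\eta_0\kappa p$ remain $<1$ so that all the exponential-in-$\ln\frac1\varepsilon$ factors can be absorbed into an arbitrarily small power $\varepsilon^{-a}$; this is precisely where the hypothesis $\kappa<\frac18$ (hence $q^2\kappa<\frac12$ with $q=2$) and the choice of $\eta_0$ in $(\frac1s,\frac1{\kappa\vee\nu})$ enter, and it must be checked that the largest power $p=4q=8$ appearing still satisfies $\eta_0\kappa\cdot 8<1$ for an admissible $\eta_0$.
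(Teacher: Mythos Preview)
Your approach follows the same lines as the paper's proof, but there are two points where it needs correction.

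First, your claim that $\|b_\delta\|_{2,q,\infty}$ is ``polynomial-in-$(\ln\frac1\varepsilon)$'' is incorrect as stated. The second-order $v$-derivatives of $b_\delta$ involve $\partial^\alpha(c\gamma)$ with $|\alpha|=2$; by (\ref{bo6b}) one has $|\partial^\alpha(c\gamma)|\le C|\theta|\bigl(\varepsilon^{1+\kappa-|\alpha|}+\Gamma_\varepsilon^\kappa\bigr)$ on the range $\{|v-v_t(\rho)|\in[\varepsilon,\Gamma_\varepsilon-1]\}$, and the first term is a \emph{negative} power of $\varepsilon$, not polylogarithmic. After the $\theta$-integral over $\{|\theta|\le\delta\}$ this produces $\delta^{1-\nu}\varepsilon^{\kappa-1}=\delta^{1-\nu+r(\kappa-1)}$, which is bounded precisely when $r\le\frac{1-\nu}{1-\kappa}$. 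So the constraint on $r$ is already needed for (\ref{a1b}), not only for (\ref{a1c}); once you use it, you get the sharper bound $\|b_\delta\|_{1,q,\infty}\le C$ (a genuine constant), which is what the paper obtains.

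Second, the ``main obstacle'' you anticipate in bounding $\Gamma_{E_\delta,q}(\gamma)$ is not an obstacle at all. You worry about the $\rho$-integral of $|v-v_t(\rho)|^{\kappa-h}$ and plan a dyadic decomposition with sublevel-set estimates from (\ref{bo4}). But on $\{|v-v_t(\rho)|\le\varepsilon\}$ the cut-off $\varphi_\varepsilon$ is identically $2\varepsilon$, so $\partial^\rho\ln\gamma$ vanishes there; while on $\{|v-v_t(\rho)|\ge\varepsilon\}$ one has the pointwise bound $|v-v_t(\rho)|^{\kappa-h}\le\varepsilon^{\kappa-h}$, after which the $\rho$-integral is trivially $\le 1$. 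This gives directly $\gamma|\partial^\alpha\ln\gamma|^p\le C\bigl(1+\varepsilon^{\kappa-p|\alpha|}\bigr)$, whence $|\partial^\alpha\ln\gamma|_{E_\delta,p}\le C\delta^{-\nu/p}\varepsilon^{-|\alpha|}$, and (\ref{a1c}) follows. No moment information on $v_t$ is needed here.

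A minor bookkeeping point: the power entering the exponential in $\alpha_{q,4q,(\delta)}$ is $[\nabla c]_{E_\delta,pq}^{pq}$ with $p=4q$, so $pq=4q^2=16$ (not $4q=8$). The condition to invoke (\ref{bo4''}) is therefore $4q^2\kappa<2$, i.e.\ $q^2\kappa<\frac12$, which with $q=2$ is exactly the hypothesis $\kappa<\frac18$.
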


\begin{proof}
(Throughout the proof, $C$ will be a universal real constant and $a$ an
arbitrary small constant that may change from one line to another.)

Since $\left\vert \partial^{\alpha}c(t,\theta,\rho,v)\right\vert
\leq\left\vert \theta\right\vert \times1_{\left\vert \alpha\right\vert =1}$
\ and $\left\vert \gamma\right\vert \leq\Gamma_{\varepsilon}^{\kappa}$\ we
get, for every $p\geq1$%
\[
\int_{E_{\delta}}\left\vert \partial^{\alpha}c(t,\theta,\rho,v)\right\vert
^{p}\gamma(t,\rho,v)\mu(d\theta,d\rho)\leq C\Gamma_{\varepsilon}^{\kappa}.
\]
In particular%
\[
\lbrack\nabla c]_{E_{\delta},4q^{2}}^{4q^{2}}=\sup_{1\leq p\leq4q}\left\vert
\nabla c\right\vert _{E_{\delta},p}^{4q^{2}}\leq C\Gamma_{\varepsilon}%
^{4q^{2}\kappa}%
\]
and consequently, if $4q^{2}\kappa<2$ then, as a consequence of (\ref{bo4''}%
),\ for sufficiently small $\varepsilon>0$%
\[
\exp(CT[\nabla c]_{E_{\delta},4q^{2}}^{4q^{2}})\leq\exp(C\Gamma_{\varepsilon
}^{4q^{2}\kappa})\leq\varepsilon^{-a}.
\]
Moreover, using (\ref{bo6b}) we get%
\[
\left\vert \partial^{\alpha}\left(  c(t,\theta,\rho,v)\gamma_{\varepsilon
}(t,\rho,v)\right)  \right\vert \leq C\left\vert \theta\right\vert
\times(\varepsilon^{1+\kappa-\left\vert \alpha\right\vert }+\Gamma
_{\varepsilon}^{\kappa}).
\]
As a consequence,%
\[
\left\vert \partial^{\alpha}b_{\delta}(t,v)\right\vert \leq C\delta^{1-\nu
}\times\left(  \varepsilon^{1+\kappa-\left\vert \alpha\right\vert }%
+\Gamma_{\varepsilon}^{\kappa}\right)  =C\left[  \delta^{1-\nu+r(1+\kappa
-\left\vert \alpha\right\vert )}+\delta^{1-\nu}\Gamma_{\varepsilon}^{\kappa
}\right]  \leq C
\]
the last inequality being true for $\left\vert \alpha\right\vert =2,$ if
$r\leq\frac{1-\nu}{\left\vert \alpha\right\vert -1-\kappa}=\frac{1-\nu
}{1-\kappa}.$ We infer that $\left\Vert b_{\delta}\right\Vert _{1,q,\infty
}\leq C$ and $\theta_{q,4q^{2}}(E_{\delta})=1+\left\Vert b_{\delta}\right\Vert
_{2,q,\infty}\leq C$ implying (\ref{a1b}).

We now turn to the proof of the inequality (\ref{a1c}). Using (\ref{bo6a}) we
get
\begin{align*}
\gamma\left\vert \partial_{v}^{\alpha}\ln\gamma\right\vert ^{p}  &  \leq
C1_{\{\left\vert v-v_{t}(\rho)\right\vert >\Gamma_{\varepsilon}-1\}}%
+1_{\{\left\vert v-v_{t}(\rho)\right\vert \in\left[  \varepsilon
,\Gamma_{\varepsilon}-1\right]  \}}(1+\left\vert v-v_{t}(\rho)\right\vert
^{-\left\vert \alpha\right\vert })^{p}\left\vert v-v_{t}(\rho)\right\vert
^{\kappa}\\
&  \leq C\left(  1_{\{\left\vert v-v_{t}(\rho)\right\vert >\Gamma
_{\varepsilon}-1\}}+1_{\{\left\vert v-v_{t}(\rho)\right\vert \in\left[
\varepsilon,\Gamma_{\varepsilon}-1\right]  \}}\varepsilon^{-p\left\vert
\alpha\right\vert +\kappa}\right)  .
\end{align*}
For every $p\geq1$ (and small enough $\delta$), we infer%
\[
\left\vert \partial_{v}^{\alpha}\ln\gamma\right\vert _{E_{\delta},p}\leq
C\delta^{-\nu/p}\varepsilon^{-\left\vert \alpha\right\vert }\leq C\delta
^{-\nu}\varepsilon^{-\left\vert \alpha\right\vert },
\]
In particular,
\[
\left.
\begin{array}
[c]{l}%
\lbrack\ln\gamma]_{E_{\delta},4q}=\sum_{1\leq\left\vert \alpha\right\vert \leq
q}\sup_{1\leq p\leq4q}\left\vert \partial_{v}^{\alpha}\ln\gamma\right\vert
_{E_{\delta},4q}\leq C\delta^{-\nu}\varepsilon^{-q}=C\delta^{-\nu-qr}\text{
and }\\
\Gamma_{G,q}(\gamma)=\sum_{h=1}^{q}\sum_{1\leq\left\vert \alpha\right\vert
\leq h}\left\vert \partial_{v}^{\alpha}\ln\gamma\right\vert _{E_{\delta
},h/\left\vert \alpha\right\vert }^{q/\left\vert \alpha\right\vert }\leq
C\delta^{-q\nu}\varepsilon^{-q}=C\delta^{-q(\nu+r)}.
\end{array}
\right.
\]

\end{proof}

\begin{remark}
\label{remq3}The inequality (\ref{a1}) remains true (with the exact same
proof) for $q=3$ and $r\leq\frac{1-\nu}{2-\kappa}$ as soon as $\kappa<\frac
{1}{18}.$
\end{remark}

\begin{proof}
Finally, by gathering these estimates, one obtains (\ref{a1c}).
\end{proof}

\begin{lemma}
\label{LemmaBoltzmann1}Suppose that (\ref{bo4'}) holds and $\kappa<\frac{1}%
{8},$ (we recall that $\varepsilon=\delta^{r}$ with $0<r\leq\frac{1-\nu
}{1-\kappa}).$ For every $a>0$ there exists $\varepsilon_{0}>0$ such that, for
$\varepsilon<\varepsilon_{0}$%
\begin{equation}
\left\Vert \frac{1}{\psi_{2}}(P_{t_{0},t}^{\varepsilon}f-\widehat{P}_{t_{0}%
,t}^{\delta})f\right\Vert _{\infty}\leq C\delta^{2-3\nu-2r-3a}\times\left\Vert
f\right\Vert _{2,\infty}. \label{a2}%
\end{equation}

\end{lemma}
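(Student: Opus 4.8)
The plan is to apply Theorem \ref{Conv} with the roles $\mathcal{P}=\mathcal{P}^{\varepsilon}$ (the truncated pure-jump semigroup) and $\widehat{\mathcal{P}}=\widehat{\mathcal{P}}^{\delta}$ (the semigroup obtained by replacing small jumps by a drift), taking $k=2$ and $q=2$. Concretely, I would verify the four hypotheses of Theorem \ref{Conv}: that $\mathcal{P}^{\varepsilon}$ satisfies $H_{2}(2)$ and $H_{3}(2,2)$, that $\widehat{\mathcal{P}}^{\delta}$ satisfies $H_{1}(2)$ and $H_{3}(2,2)$, that the basic regularity/integrability assumptions (\ref{Not5}),(\ref{Not6}),(\ref{Not7}),(\ref{G5'}) hold for the coefficients of $\widehat{\mathcal{P}}^{\delta}$ (here $\sigma=0$, $b=b_{\delta}$, $c$ unchanged, $\gamma=\gamma_{\varepsilon}$), and finally that the generators differ by a controlled amount, i.e. $\left\Vert \psi_{2}^{-1}(\mathcal{L}_{t}^{\varepsilon}-\widehat{\mathcal{L}}_{t}^{\delta})f\right\Vert_{\infty}\leq \varepsilon(t)\left\Vert f\right\Vert_{2,\infty}$ with an explicit $\varepsilon(t)$.

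For the generator difference, I would write out $(\mathcal{L}_{t}^{\varepsilon}-\widehat{\mathcal{L}}_{t}^{\delta})f(v)$: the two operators agree on the jump part over $E_{\delta}$, so the difference is exactly the small-jump part $\int_{\{|\theta|\leq\delta\}}\gamma_{\varepsilon}(t,\rho,v)\big(f(v+c(t,\theta,\rho,v))-f(v)-\langle\nabla f(v),c(t,\theta,\rho,v)\rangle\big)\mu(d\theta,d\rho)$. A first-order Taylor expansion with remainder bounds this by $\tfrac{1}{2}\left\Vert f\right\Vert_{2,\infty}\int_{\{|\theta|\leq\delta\}}|c(t,\theta,\rho,v)|^{2}\gamma_{\varepsilon}(t,\rho,v)\mu(d\theta,d\rho)$. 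Since $|c(t,\theta,\rho,v)|=|A(\theta)(v-v_{t}(\rho))|\leq C|\theta|(1+|v|+|v_{t}(\rho)|)$ and $\gamma_{\varepsilon}\leq\Gamma_{\varepsilon}^{\kappa}$, while $\int_{\{|\theta|\leq\delta\}}\theta^{2}\,\theta^{-(1+\nu)}d\theta\leq C\delta^{2-\nu}$, one gets $\varepsilon(t)\leq C\delta^{2-\nu}\Gamma_{\varepsilon}^{\kappa}$ up to the $\psi_{2}(v)$-weight, which (using (\ref{bo4''}), $\varepsilon=\delta^{r}$, and absorbing $\Gamma_{\varepsilon}^{\kappa}=(\ln\tfrac1\varepsilon)^{\kappa\eta_0}$ into $\delta^{-a}$) is at most $C\delta^{2-\nu-a}$; here the growth in $v_{t}(\rho)$ is controlled by (\ref{bo4}) and absorbed into $\psi_{2}$ using the moment bounds. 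The weight $\psi_{2}$ is what lets me handle the unbounded factor $(1+|v|)^{2}$ coming from $|c|^{2}$.

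Then I would plug into (\ref{dist6}): $\left\Vert \psi_{2}^{-1}(\mathcal{P}_{t_{0},t}^{\varepsilon}-\widehat{\mathcal{P}}_{t_{0},t}^{\delta})f\right\Vert_{\infty}\leq C_{2}(T,\mathcal{P}^{\varepsilon})\,Q_{2}(T,\widehat{\mathcal{P}}^{\delta})\,\left\Vert f\right\Vert_{2,\infty}\int_{t_0}^{t}\varepsilon(s)ds$. The factor $Q_{2}(T,\widehat{\mathcal{P}}^{\delta})$ is exactly what the preceding lemma controls: by (\ref{a1}) it is $\leq C\delta^{-q(\nu+r+a)}=C\delta^{-2(\nu+r+a)}$ (using $q=2$). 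The constant $C_{2}(T,\mathcal{P}^{\varepsilon})$ from $H_{2}(2)$ is uniformly bounded in $\varepsilon$ by the exponential-moment estimate (\ref{bo4}) for $V^{\varepsilon}$ (this gives $\sup_{t}\|\psi_{2}^{-1}\mathcal{P}^{\varepsilon}_{t_0,t}\psi_2\|_{\infty}<\infty$, absorbing any $\varepsilon$-dependence into a universal constant by choosing $\varepsilon$ small). Combining, $\int\varepsilon(s)ds\leq C\delta^{2-\nu-a}$ and $Q_{2}\leq C\delta^{-2\nu-2r-2a}$ multiply to $C\delta^{2-\nu-(2\nu+2r)-3a}=C\delta^{2-3\nu-2r-3a}$, which is (\ref{a2}).

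The main obstacle I anticipate is bookkeeping the $v$-dependence carefully enough to see that the $\psi_{2}$-weight genuinely absorbs all the polynomial growth (both the $|c|^{2}\sim(1+|v|)^{2}$ factor in the Taylor remainder and the moments of $v_{t}(\rho)$), so that the hypothesis (\ref{dist4'}) of Theorem \ref{Conv} really holds with the weight $\psi_{k}=\psi_{2}$ and not a higher power; checking $H_{3}(2,2)$ for both semigroups — i.e. that $\|\psi_2^{-1}\nabla\mathcal{L}_t f\|_\infty\le C\|f\|_{2,\infty}$, which is legitimate here precisely because $\sigma=0$ so only second derivatives of $f$ enter — requires differentiating the jump integral and using the Lipschitz/boundedness bounds on $c,\gamma_{\varepsilon}$ together with (\ref{bo6a})–(\ref{bo6b}). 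The rest is the arithmetic of exponents, which is routine given (\ref{a1}) and (\ref{bo4''}).
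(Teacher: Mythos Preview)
Your approach is essentially the paper's: apply Theorem \ref{Conv} with $k=q=2$, control the generator difference by a second-order Taylor remainder (giving the factor $\delta^{2-\nu}\Gamma_\varepsilon^\kappa\leq C\delta^{2-\nu}\varepsilon^{-a}$), and multiply by $Q_2(T,\widehat{\mathcal{P}}^{\delta})\leq C\delta^{-2(\nu+r+a)}$ from (\ref{a1}) and by $C_2(T,\mathcal{P}^{\varepsilon})$.

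There is one gap in your justification of $H_2(2)$. The estimate (\ref{bo4}) gives exponential moments of $V_t^{\varepsilon}$ only when the initial data is $V_0\sim f_0$; it is an \emph{averaged} bound $\mathbb{E}\big[e^{|V_t^{\varepsilon}|^{s'}}\big]<\infty$, not the pointwise bound $\mathbb{E}\big[\psi_2(V_{t_0,t}^{\varepsilon}(v))\big]\leq C\psi_2(v)$ uniformly in $v\in\mathbb{R}^2$ that $H_2(2)$ requires (recall (\ref{Dist2}) is a sup over the starting point). The paper obtains this pointwise bound directly (its Step 1): apply It\^o's formula to $|V_t^{\varepsilon}(v)|^p$, bound the jump increments via $\big||a+b|^p-|a|^p\big|\leq C|b|(|a|^{p-1}+|b|^{p-1})$ and $\gamma_\varepsilon\leq\Gamma_\varepsilon^\kappa$, use $\int_0^1|v_s(\rho)|^p\,d\rho<\infty$ from (\ref{bo4}), and close by Gronwall. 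The resulting constant is $C_p(T,\mathcal{P}^{\varepsilon})\leq C\exp(C\Gamma_\varepsilon^\kappa)\leq C\varepsilon^{-a}$, not uniform in $\varepsilon$; this extra $\varepsilon^{-a}=\delta^{-ra}$ is then absorbed into the final exponent, exactly the kind of $\varepsilon$-dependence you anticipated having to absorb.

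A minor cosmetic difference: the paper checks $H_3(1,2)$ (weight $\psi_1$) rather than $H_3(2,2)$; since $\psi_1\leq\psi_2$ this is stronger and implies what you need.
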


\bigskip

\begin{proof}
We will use Theorem \ref{Conv} with $q=2,k=2$.

\textbf{Step 1}. We check that for every $p\in\mathbb{N}$ and every $a>0,$ one
can find $C\geq1$ and $\varepsilon_{p,a}>0$ such that, for every
$\varepsilon\in(0,\varepsilon_{p,a}),$ the following estimate holds true
\begin{equation}
\mathbb{E}\left[  \left\vert V_{t}^{\varepsilon}(v)\right\vert ^{p}\right]
\leq C\psi_{p}(v)\varepsilon^{-a}. \label{a2a}%
\end{equation}
This implies that the hypothesis $H_{2}(p)$ (see (\ref{Dist2})) holds for the
semigroup $\mathcal{P}^{\varepsilon}$ with $C_{p}(T,\mathcal{P}^{\varepsilon
})=\varepsilon^{-a}.$

To this purpose, we use It\^{o}'s formula for $f_{p}(x)=\left\vert
x\right\vert ^{p}$ to get
\[
\left.
\begin{array}
[c]{l}%
\mathbb{E}\left[  \left\vert V_{t}^{\varepsilon}(v)\right\vert ^{p}\right]
=\left\vert v\right\vert ^{p}+J_{p}(t),\text{ where }\\
J_{p}(t)=\mathbb{E}\left[  \int_{0}^{t}\int_{E\times R_{+}}(\left\vert
V_{s-}^{\varepsilon}+A(\theta)(V_{s-}^{\varepsilon}-v_{s}(\rho))\right\vert
^{p}-\left\vert V_{s-}^{\varepsilon}\right\vert ^{p})1_{\{u\leq\varphi
_{\varepsilon}^{\kappa}(\left\vert V_{s-}^{\varepsilon}-v_{s}(\rho)\right\vert
\}}N(ds,d\theta,d\rho,du)\right]  .
\end{array}
\right.
\]
Using the inequality $\left\vert \left\vert a+b\right\vert ^{p}-\left\vert
a\right\vert ^{p}\right\vert \leq C\left\vert b\right\vert (\left\vert
a\right\vert ^{p-1}+\left\vert b\right\vert ^{p-1})$, we obtain%
\[
\left.
\begin{array}
[c]{l}%
\left\vert J_{p}(t)\right\vert \leq C\Gamma_{\varepsilon}^{\kappa}%
\mathbb{E}\left[  \int_{0}^{t}\int_{-\frac{\pi}{2}}^{\frac{\pi}{2}}\int
_{0}^{1}\left\vert A(\theta)(V_{s}^{\varepsilon}-v_{s}(\rho))\right\vert
\times(\left\vert A(\theta)(V_{s}^{\varepsilon}-v_{s}(\rho))\right\vert
^{p-1}+\left\vert V_{s}^{\varepsilon}\right\vert ^{p-1})\frac{d\theta}%
{\theta^{1+\nu}}d\rho ds\right] \\
\text{ \ \ \ \ \ \ \ }\leq C\Gamma_{\varepsilon}^{\kappa}\mathbb{E}\left[
\int_{0}^{t}\int_{0}^{1}(\left\vert v_{s}(\rho)\right\vert +\left\vert
V_{s}^{\varepsilon}\right\vert )(\left\vert v_{s}(\rho)\right\vert
^{p-1}+\left\vert V_{s}^{\varepsilon}\right\vert ^{p-1})d\rho ds\right] \\
\text{ \ \ \ \ \ \ \ }\leq C\Gamma_{\varepsilon}^{\kappa}\mathbb{E}\left[
\int_{0}^{t}\int_{0}^{1}(\left\vert v_{s}(\rho)\right\vert ^{p}+\left\vert
V_{s}^{\varepsilon}\right\vert ^{p})d\rho ds)\right]  \leq C\Gamma
_{\varepsilon}^{\kappa}(1+E\int_{0}^{t}\left\vert V_{s}^{\varepsilon
}\right\vert ^{p})ds
\end{array}
\right.
\]
The last inequality is a consequence of
\begin{equation}
\int_{0}^{1}\left\vert v_{s}(\rho)\right\vert ^{p}d\rho=\int_{R^{2}}\left\vert
v\right\vert ^{p}f_{s}(dv)\leq C<\infty. \label{a2b}%
\end{equation}
The inequality (\ref{a2a}) is then a consequence of Gronwall's Lemma.

\textbf{Step 2}. Second, we need to estimate, for regular $f,$ the difference
between the actions of infinitesimal operators%
\[
\left(  \mathcal{L}_{t}^{\varepsilon}-\widehat{\mathcal{L}}_{t}^{\delta
}\right)  f(v)=\int_{E_{\delta}^{c}}\mu(d\theta,d\rho)\gamma_{\varepsilon
}(t,\rho,v)(f(v+c(t,\theta,\rho,v))-f(v)-\left\langle \nabla f(v),c(t,\theta
,\rho,v)\right\rangle ).
\]
One easily notes (using, again, (\ref{a2b})), that%
\[
\left\vert \left(  \mathcal{L}_{t}^{\varepsilon}-\widehat{\mathcal{L}}%
_{t}^{\delta}\right)  f(v)\right\vert \leq\left\Vert f\right\Vert _{2,\infty
}\Gamma_{\varepsilon}^{\kappa}\int_{E_{\delta}^{c}}\mu(d\theta,dv)\left\vert
c(t,\theta,\rho,v)\right\vert ^{2}\leq C\psi_{2}(v)\delta^{2-\nu}\left\Vert
f\right\Vert _{2,\infty}\Gamma_{\varepsilon}^{\kappa}.
\]
We conclude that%
\begin{equation}
\left\Vert \frac{1}{\psi_{2}}\left(  \mathcal{L}_{t}^{\varepsilon}%
-\widehat{\mathcal{L}}_{t}^{\delta}\right)  f\right\Vert _{\infty}\leq
C\delta^{2-\nu}\left\Vert f\right\Vert _{2,\infty}\varepsilon^{-a}.
\label{bo11}%
\end{equation}
This proves that (\ref{dist4'}) holds with $k=2$ and $\varepsilon
(t)=\delta^{2-\nu}\varepsilon^{-a}.$

\textbf{Step 3}. We check that $H_{3}(1,2)$ holds true for both $\widehat
{\mathcal{P}}^{\delta}$ and $\mathcal{P}^{\varepsilon}$. We will only check it
for the approximating semigroup $\widehat{\mathcal{P}}^{\delta},$ the
remaining case being very similar. We recall that $f_{t}(dv)=\mathbb{P}%
(V_{t}\in dv)$ where $V_{t}$ is the solution of the equation (\ref{bo3}).
Then, for every $x\in%
\mathbb{R}
^{2},$
\[
b_{\delta}(t,x)=\int_{E_{\delta}^{c}}\mu(d\theta,d\rho)\gamma_{\varepsilon
}(t,\rho,x)c(t,\theta,\rho,x)=\int_{\{\theta\leq\delta\}}\mathbb{E}\left[
A(\theta)(x-V_{t})\varphi_{\varepsilon}^{\kappa}(\left\vert x-V_{t}\right\vert
)\right]  \frac{d\theta}{\theta^{1+\nu}}.
\]
Using (\ref{bo6c}) with $\beta=1$ we get
\[
\left\vert b_{\delta}(t,x)\right\vert +\left\vert \nabla b_{\delta
}(t,x)\right\vert \leq C\Gamma_{\varepsilon}^{\kappa}\psi_{1}(x)
\]
such that
\begin{equation}
\left\Vert \frac{1}{\psi_{1}}\nabla(b_{\delta}(t,\circ)\nabla f)\right\Vert
_{\infty}\leq C\Gamma_{\varepsilon}^{\kappa}\left\Vert f\right\Vert
_{2,\infty}. \label{L1}%
\end{equation}
We write now%
\[
\left.
\begin{array}
[c]{l}%
I_{t}(f)(x):=\int_{E_{\delta}}\mu(d\theta,d\rho)\gamma(t,\rho
,x)(f(x+c(t,\theta,\rho,x))-f(x))\\
\text{ \ \ \ \ \ \ \ \ \ \ }=\int_{E_{\delta}}\mu(d\theta,d\rho)\gamma
(t,\rho,x)\int_{0}^{1}d\lambda\left\langle \nabla f(x+\lambda c(t,\theta
,\rho,x)),c(t,\theta,\rho,x)\right\rangle \\
\text{ \ \ \ \ \ \ \ \ \ \ }=\mathbb{E}\left[  \int_{\{\delta\leq\left\vert
\theta\right\vert \leq\frac{\pi}{2}\}}\frac{d\theta}{\theta^{1+\nu}}%
\varphi_{\varepsilon}^{\kappa}\left(  \left\vert x-V_{t}\right\vert \right)
\int_{0}^{1}d\lambda\left\langle \nabla f(x+\lambda A(\theta)(x-V_{t}%
)),A(\theta)(x-V_{t})\right\rangle \right]  .
\end{array}
\right.
\]
And, using again (\ref{bo6c}) with $\beta=1$, this gives%
\begin{equation}
\left\Vert \frac{1}{\psi_{1}}I_{t}(f)\right\Vert _{\infty}+\left\Vert \frac
{1}{\psi_{1}}\nabla I_{t}(f)\right\Vert _{\infty}\leq C\Gamma_{\varepsilon
}^{\kappa}\left\Vert f\right\Vert _{2,\infty}. \label{L2}%
\end{equation}

\textbf{Step 4}. We use (\ref{dist6}) with $q=2,k=2:$%
\[
\left.
\begin{array}
[c]{l}%
\left\Vert \frac{1}{\psi_{1}}(\mathcal{P}_{t_{0},t}^{\varepsilon}%
f-\widehat{\mathcal{P}}_{t_{0},t}^{\delta})f\right\Vert _{\infty}\leq C\times
C_{1}(t,\mathcal{P}^{\varepsilon})Q_{2}(t,\widehat{\mathcal{P}}^{\delta}%
)\int_{t_{0}}^{t}\varepsilon(s)ds\times\left\Vert f\right\Vert _{2,\infty}\\
\text{ \ \ \ \ \ \ \ \ \ \ \ \ \ \ \ \ \ \ \ \ \ \ \ \ \ \ \ \ \ \ \ \ }\leq
C\varepsilon^{-a}\times\delta^{-2(\nu+r+a)}\times\delta^{2-\nu}\times
\left\Vert f\right\Vert _{2,\infty}=C\delta^{2-3\nu-2r-3a}\times\left\Vert
f\right\Vert _{2,\infty}.
\end{array}
\right.
\]
Here we have used (\ref{a2a}),(\ref{bo11}) and (\ref{a1}). \ 
\end{proof}

We can now provide a proof for Theorem \ref{ThBoltzmannOrder1}.

\begin{proof}
[Proof of Theorem \ref{ThBoltzmannOrder1}]We recall that%
\[
\left.
\begin{array}
[c]{l}%
r=\frac{2-3\nu}{3+\kappa}\text{ and }\varepsilon=\delta^{r}\text{ and we
write}\\
\left\vert \mathbb{E}\left[  f(V_{t})\right]  -\mathbb{E}\left[  f(U^{\delta
}(V_{0})\right]  \right\vert \leq A+B,\text{ where}\\
A=\left\vert \mathbb{E}\left[  f(V_{t})\right]  -\mathbb{E}\left[
f(V_{t}^{\varepsilon}(V_{0})\right]  \right\vert \text{, }B=\left\vert
\mathbb{E}\left[  f(U_{t}^{\delta}(V_{0}))\right]  -\mathbb{E}\left[
f(V_{t}^{\varepsilon}(V_{0})\right]  \right\vert .
\end{array}
\right.
\]
By (\ref{bo8}),
\[
A\leq\varepsilon^{-a}\times\varepsilon^{1+\kappa}\left\Vert f\right\Vert
_{1,\infty}\leq\delta^{r(1+\kappa)-a}\left\Vert f\right\Vert _{1,\infty}.
\]
Since $(2-3\nu)/(1-\nu)\leq3\leq(3+\kappa)/(1-\kappa)$ it follows that
$r\leq(1-\nu)/(1-\kappa)$ and so we may use (\ref{a2}) and we obtain
\[
B\leq\int_{\mathbb{R}^{2}}\left\vert \mathbb{E}\left[  f(V_{t}^{\varepsilon
}(v)\right]  -\mathbb{E}\left[  f(U^{\delta}(v)\right]  \right\vert
f_{0}(dv)\leq C\int_{\mathbb{R}^{2}}(1+\left\vert v\right\vert ^{2}%
)f_{0}(dv)\times\delta^{2-3\nu-2r-3a}\left\Vert f\right\Vert _{2,\infty}.
\]
We conclude that
\[
\left\vert \mathbb{E}\left[  f(V_{t})\right]  -\mathbb{E}\left[  f(U^{\delta
}(V_{0})\right]  \right\vert \leq C\left\Vert f\right\Vert _{2,\infty}%
(\delta^{r(1+\kappa)-a}+\delta^{2-3\nu-2r-3a})\leq C\left\Vert f\right\Vert
_{2,\infty}\delta^{\frac{(2-3\nu)(1+\kappa)}{3+\kappa}-3a}%
\]
the last inequality being a consequence of the choice of $r.$
\end{proof}

\subsubsection{Proof of the Second-Order Estimates in Boltzmann Equation}

We begin with giving some useful estimates for the noise coefficient
$\sigma_{\delta}:$

\begin{lemma}
1. Let $q\in\mathbb{N}^{\ast},$ $r\leq(1-\nu/2)/(q-1-\kappa/2)$ and
$\varepsilon=\delta^{r}.$ Then, the following inequality holds true$\left\Vert
\sigma_{\delta}\right\Vert _{1,q,(\mu,\infty)}\leq C.$

2. $\widehat{\mathcal{L}}_{t}^{\delta}$ verifies $H_{3}(2,3)$\footnote{see
(\ref{Dist3a})}$.$

3\textbf{. }Let us assume that\textbf{\ }$\kappa\leq1/18$ and
\begin{equation}
r\leq\frac{1-\nu}{2-\kappa}\wedge\frac{1-\nu/2}{2-\kappa/2}. \label{L6}%
\end{equation}
Then%
\begin{equation}
\left\Vert \frac{1}{\psi_{3}}(P_{t_{0},t}^{\varepsilon}f-\widehat{P}_{t_{0}%
,t}^{\delta})f\right\Vert _{\infty}\leq C\delta^{3-4\nu-3r-a}\times\left\Vert
f\right\Vert _{3,\infty}. \label{L4}%
\end{equation}

\end{lemma}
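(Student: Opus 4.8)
The plan is to prove the three assertions in order, the first being the only genuinely new ingredient: the diffusion coefficient $\sigma_{\delta}$ does not appear in the first-order analysis of Lemma \ref{LemmaBoltzmann1}, so its $v$-derivatives must be estimated from scratch; assertions 2 and 3 then follow the pattern of that analysis, with $\sigma_{\delta}$ fed in through assertion 1. Throughout, $a>0$ denotes a constant that may shrink from line to line and into which all logarithmic $\Gamma_{\varepsilon}$-factors are absorbed via (\ref{bo4''}).

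For assertion 1, I would differentiate $\sigma(t,\theta,\rho,v)=A(\theta)(v-v_{t}(\rho))\varphi_{\varepsilon}^{\kappa/2}(\left\vert v-v_{t}(\rho)\right\vert )$ in $v$ by the Leibniz rule. Since $c(t,\theta,\rho,v)=A(\theta)(v-v_{t}(\rho))$ is affine in $v$, only two families of terms survive for a multi-index $\alpha$ of length $j\geq1$: a term $A(\theta)\partial_{v}^{\alpha^{\prime}}\varphi_{\varepsilon}^{\kappa/2}$ with $\left\vert \alpha^{\prime}\right\vert =j-1$, and the term $A(\theta)(v-v_{t}(\rho))\partial_{v}^{\alpha}\varphi_{\varepsilon}^{\kappa/2}$. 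I would bound $\partial_{v}^{\alpha}[\varphi_{\varepsilon}^{\kappa/2}(\left\vert \cdot\right\vert )]$ by the analogue of (\ref{bo6b}) for the exponent $\kappa/2$ (same proof as in \cite[Lemma 2.3]{BallyFournier2011}): this derivative is supported in $\{\left\vert v-v_{t}(\rho)\right\vert \in(\varepsilon,\Gamma_{\varepsilon}+1)\}$ and bounded there by $C(\left\vert v-v_{t}(\rho)\right\vert ^{\kappa/2-j}+\Gamma_{\varepsilon}^{\kappa/2-1})$. The crucial point is that on this band $\left\vert A(\theta)(v-v_{t}(\rho))\right\vert \leq C\left\vert \theta\right\vert \left\vert v-v_{t}(\rho)\right\vert $ and, for $j\geq2$, the product $\left\vert v-v_{t}(\rho)\right\vert ^{1+\kappa/2-j}$ is maximised at the lower endpoint $\varepsilon$, so $\left\vert \partial_{v}^{\alpha}\sigma\right\vert \leq C\left\vert \theta\right\vert \varepsilon^{\kappa/2-j+1}$ on the band (and $\left\vert \partial_{v}\sigma\right\vert \leq C\left\vert \theta\right\vert \Gamma_{\varepsilon}^{\kappa/2}$ when $j=1$). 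Squaring, integrating $\left\vert \theta\right\vert ^{2}\theta^{-(1+\nu)}d\theta$ over $\{\left\vert \theta\right\vert \leq\delta\}$ (a factor $\delta^{2-\nu}$) and using $\int_{0}^{1}1_{\{\left\vert v-v_{t}(\rho)\right\vert \in(\varepsilon,\Gamma_{\varepsilon}+1)\}}d\rho\leq1$ yields $\left\Vert \partial_{v}^{\alpha}\sigma_{\delta}\right\Vert _{(\mu,\infty)}\leq C\varepsilon^{\kappa/2-j+1}\delta^{1-\nu/2}=C\delta^{1-\nu/2+r(1+\kappa/2)-rj}$; the exponent is nonnegative exactly when $r\leq(1-\nu/2)/(j-1-\kappa/2)$, and taking $j=q$ (the most restrictive) together with $\Gamma_{\varepsilon}^{\kappa/2}\delta^{1-\nu/2}\leq C$ for $j=1$ gives $\left\Vert \sigma_{\delta}\right\Vert _{1,q,(\mu,\infty)}\leq C$.

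For assertion 2, the plan is to differentiate the three pieces of $\widehat{\mathcal{L}}_{t}^{\delta}f=\langle b_{\delta},\nabla f\rangle+\tfrac{1}{2}\sum_{i,j}a_{\delta}^{i,j}\partial_{ji}^{2}f+\int_{E_{\delta}}\gamma_{\varepsilon}(f(\cdot+c)-f)d\mu$ separately. The drift and jump pieces are handled exactly as in Step 3 of the proof of Lemma \ref{LemmaBoltzmann1}, via (\ref{bo6c}) with $\beta=1$, giving bounds of order $\Gamma_{\varepsilon}^{\kappa}\left\Vert f\right\Vert _{2,\infty}$ against $\psi_{1}$. For the new second-order piece I would use assertion 1 and $\int_{0}^{1}\left\vert v-v_{t}(\rho)\right\vert ^{p}d\rho\leq C\psi_{p}(v)$ to get $\left\vert a_{\delta}^{i,j}(t,v)\right\vert \leq C\psi_{2}(v)$ and $\left\vert \nabla a_{\delta}^{i,j}(t,v)\right\vert \leq C\psi_{1}(v)$ (with constants depending on $\varepsilon,\delta$), whence $\frac{1}{\psi_{2}}\left\vert \nabla(a_{\delta}^{i,j}\partial_{ji}^{2}f)\right\vert \leq\frac{1}{\psi_{2}}(\left\vert \nabla a_{\delta}^{i,j}\right\vert \left\vert \partial^{2}f\right\vert +\left\vert a_{\delta}^{i,j}\right\vert \left\vert \partial^{3}f\right\vert )\leq C\left\Vert f\right\Vert _{3,\infty}$. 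This is precisely why one works with $q=3$ in $H_{3}(2,3)$; the identical reasoning gives $H_{3}(\cdot,\cdot)$ for $\mathcal{P}^{\varepsilon}$, whose generator is pure-jump.

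For assertion 3, I would apply Theorem \ref{Conv} to the pair $(P^{\varepsilon},\widehat{P}^{\delta})$ with $k=q=3$. Hypothesis $H_{2}(3)$ for $P^{\varepsilon}$ follows from the moment bound $\mathbb{E}[\left\vert V_{t}^{\varepsilon}(v)\right\vert ^{p}]\leq C\psi_{p}(v)\varepsilon^{-a}$ of (\ref{a2a}), so $C_{3}(T,\mathcal{P}^{\varepsilon})\leq C\varepsilon^{-a}$; $H_{3}(3,3)$ for both semigroups follows from assertion 2 (since $\psi_{3}^{-1}\leq\psi_{2}^{-1}$); and $H_{1}(3)$ for $\widehat{P}^{\delta}$ follows from Theorem \ref{M}, the standing assumptions being readily checked for $\sigma_{\delta},b_{\delta},c,\gamma_{\varepsilon}$, combining Remark \ref{remq3} (which controls the $b_{\delta}$- and $\ln\gamma$-terms and forces $\kappa\leq\tfrac{1}{18}$ so that $\exp(C\Gamma_{\varepsilon}^{36\kappa})\leq\varepsilon^{-a}$ via (\ref{bo4''})) with assertion 1 under (\ref{L6}) (which controls the $\sigma_{\delta}$-contributions to $\theta_{3,\cdot}$ and $a_{\cdot}$), giving $Q_{3}(T,\widehat{\mathcal{P}}^{\delta})\leq C\delta^{-3(\nu+r+a)}$. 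For the operator distance I would Taylor-expand $f(v+c)-f(v)$ to second order; since $b_{\delta}=\int_{\{\left\vert \theta\right\vert \leq\delta\}}\gamma_{\varepsilon}c\,d\mu$ and $a_{\delta}^{i,j}=\int_{\{\left\vert \theta\right\vert \leq\delta\}}\gamma_{\varepsilon}c^{i}c^{j}d\mu$, the zeroth-, first- and second-order terms cancel and $(\mathcal{L}_{t}^{\varepsilon}-\widehat{\mathcal{L}}_{t}^{\delta})f(v)=\int_{\{\left\vert \theta\right\vert \leq\delta\}}\gamma_{\varepsilon}R(v,c)d\mu$ with $\left\vert R(v,c)\right\vert \leq C\left\Vert f\right\Vert _{3,\infty}\left\vert c\right\vert ^{3}$; using $\int_{\{\left\vert \theta\right\vert \leq\delta\}}\left\vert \theta\right\vert ^{3}\theta^{-(1+\nu)}d\theta\leq C\delta^{3-\nu}$ and $\int_{0}^{1}\left\vert v-v_{t}(\rho)\right\vert ^{3}d\rho\leq C\psi_{3}(v)$ gives $\left\Vert \psi_{3}^{-1}(\mathcal{L}_{t}^{\varepsilon}-\widehat{\mathcal{L}}_{t}^{\delta})f\right\Vert _{\infty}\leq C\delta^{3-\nu}\Gamma_{\varepsilon}^{\kappa}\left\Vert f\right\Vert _{3,\infty}\leq C\delta^{3-\nu-a}\left\Vert f\right\Vert _{3,\infty}$, i.e. (\ref{dist4'}) with $\varepsilon(t)\equiv C\delta^{3-\nu-a}$. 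Inserting all of this into (\ref{dist6}) and absorbing $\varepsilon^{-a}=\delta^{-ra}$ into the small exponent gives $C\delta^{-3(\nu+r+a)}\delta^{3-\nu-a}\left\Vert f\right\Vert _{3,\infty}=C\delta^{3-4\nu-3r-a}\left\Vert f\right\Vert _{3,\infty}$, which is (\ref{L4}). The main obstacle is assertion 1: the sharp denominator $q-1-\kappa/2$, hence the second constraint in (\ref{L6}), relies on recovering one power of $\varepsilon$ from the factor $c$ multiplying the singular $v$-derivatives of the cut-off, rather than crudely bounding $\left\vert c\right\vert $ by $\Gamma_{\varepsilon}$; once this is secured, the remaining steps merely reassemble estimates already available from the first-order approximation.
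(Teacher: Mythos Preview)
Your proposal is correct and follows essentially the same route as the paper's proof: assertion~1 via (\ref{bo6b}) with exponent $\kappa/2$ and the Leibniz structure of $\sigma=c\gamma_{\varepsilon}^{1/2}$, assertion~2 by treating the drift and jump parts as in Lemma~\ref{LemmaBoltzmann1} and bounding $\nabla(a_{\delta}^{i,j}\partial_{ij}^{2}f)$ against $\psi_{2}$, and assertion~3 by Theorem~\ref{Conv} with $k=q=3$, combining (\ref{a2a}), Remark~\ref{remq3}, assertion~1, and the third-order Taylor remainder for $\mathcal{L}_{t}^{\varepsilon}-\widehat{\mathcal{L}}_{t}^{\delta}$. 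The only notable stylistic difference is that for $\nabla a_{\delta}^{i,j}$ you invoke assertion~1 and Cauchy--Schwarz in $L^{2}(\mu)$, whereas the paper appeals directly to (\ref{bo6c}) with $\beta=1$; both give the same $\psi$-weighted bound.
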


\begin{proof}
1.\textbf{ }Using (\ref{bo6b}) (with $\frac{\kappa}{2}$ instead of $\kappa$),
we get%
\[
\left\vert \partial^{\alpha}\sigma(t,\theta,\rho,v)\right\vert =\left\vert
\partial^{\alpha}\left(  c(t,\theta,\rho,v)\gamma^{\frac{1}{2}}(t,\rho
,v)\right)  \right\vert \leq C\left\vert \theta\right\vert \left(
\varepsilon^{1+\frac{\kappa}{2}-\left\vert \alpha\right\vert }+\Gamma
_{\varepsilon}^{\frac{\kappa}{2}}\right)
\]
which gives, for $1\leq\left\vert \alpha\right\vert \leq q,$%
\[
\int_{E_{\delta}^{c}}\left\vert \partial^{\alpha}\sigma(t,\theta
,\rho,v)\right\vert ^{2}\mu(d\theta,d\rho)\leq C\delta^{2-\nu}\left(
\varepsilon^{2+\kappa-2\left\vert \alpha\right\vert }+\Gamma_{\varepsilon
}^{\kappa}\right)  =C\delta^{2-\nu-r(2\left\vert \alpha\right\vert -2-\kappa
)}\leq C
\]
the last inequality being true if $r\leq(2-\nu)/(2q-2-\kappa).$

2. We only check that
\begin{equation}
\left\Vert \frac{1}{\psi_{2}}\nabla(a_{\delta}^{i,j}(t,\circ)\partial
^{i}\partial^{j}f\right\Vert _{\infty}\leq C\Gamma_{\varepsilon}^{\kappa
}\left\Vert f\right\Vert _{3,\infty}. \label{L3}%
\end{equation}
(The remaining estimates are similar to step 3 in the proof of Lemma
\ref{LemmaBoltzmann1}). One has
\[
a_{\delta}^{i,j}(t,v)=\int_{\{\left\vert \theta\right\vert \leq\delta\}}%
\mu(d\theta,d\rho)\left(  c^{i}c^{j}\right)  (t,\theta,\rho,v)\gamma
(t,\rho,v)
\]
so (\ref{L3}) follows from (\ref{bo6c}) with $\beta=1.$

3. We will use Theorem \ref{Conv} with $q=k=3.$ Using the first assertion and
(\ref{a1}) (see Remark \ref{remq3}), we get that both $\widehat{\mathcal{P}%
}_{t}^{\delta}$ and $\mathcal{P}_{t}^{\varepsilon}$ verify $H_{3}(2,3)$ with
$Q_{q}(t,\widehat{\mathcal{P}}^{\delta})\leq C\times\delta^{-q(\nu+r+a)}.$ And
we recall that in (\ref{a2a}) we have proved that $C_{3}(T,\mathcal{P}%
^{\varepsilon})=\varepsilon^{-a}.$ It remains to estimate
\[
(\mathcal{L}_{t}^{\varepsilon}-\widehat{\mathcal{L}}_{t}^{\delta}%
)f(v)=\int_{E_{\delta}^{c}}\mu(d\theta,d\rho)\gamma(t,\rho,v)\left(
\begin{array}
[c]{c}%
f(v+c(t,\theta,\rho,v))-f(v)-\left\langle \nabla f(v),c(t,\theta
,\rho,v)\right\rangle \\
-\frac{1}{2}\sum_{i,j=1}^{d}c^{i}c^{j}(t,\theta,\rho,v)\partial_{ij}^{2}f(v)
\end{array}
\right)
\]
Taylor's formula gives
\[
\left.
\begin{array}
[c]{l}%
\left\vert (\mathcal{L}_{t}^{\varepsilon}-\widehat{\mathcal{L}}_{t}^{\delta
})f(v)\right\vert \leq C\int_{E_{\delta}^{c}}\mu(d\theta,dv)\left\vert
c(t,\theta,\rho,v)\right\vert ^{3}\times\left\Vert f\right\Vert _{3,\infty
}\Gamma_{\varepsilon}^{\kappa}\\
\leq C\delta^{3-\nu}\int_{0}^{1}\left\vert v-v_{t}(\rho)\right\vert ^{3}%
d\rho\times\left\Vert f\right\Vert _{3,\infty}\Gamma_{\varepsilon}^{\kappa
}\leq C\psi_{3}(v)\delta^{3-\nu}\left\Vert f\right\Vert _{3,\infty}%
\Gamma_{\varepsilon}^{\kappa}%
\end{array}
\right.
\]
so that (\ref{dist4'}) holds with $\varepsilon(t)=\delta^{3-\nu}%
\varepsilon^{-a}.$ We use (\ref{dist6}) with $q=k=3$ to get%
\[
\left.
\begin{array}
[c]{c}%
\left\Vert \frac{1}{\psi_{3}}(P_{t_{0},t}^{\varepsilon}f-\widehat{P}_{t_{0}%
,t}^{\delta})f\right\Vert _{\infty}\leq C\times C_{3}(P^{\varepsilon}%
)Q_{3}(\widehat{P}^{\delta})\int_{t_{0}}^{t}\varepsilon(s)ds\times\left\Vert
f\right\Vert _{3,\infty}\\
\leq C\varepsilon^{-a}\times\delta^{-3(\nu+r)}\times\delta^{3-\nu}%
\times\left\Vert f\right\Vert _{3,\infty}=C\delta^{3-4\nu-3r-a}\times
\left\Vert f\right\Vert _{3,\infty}.
\end{array}
\right.
\]

\end{proof}

\begin{proof}
[Proof of Theorem \ref{ThBoltzmann2}]We proceed as in the first-order case by
combining the two errors by taking $r$ such that $r(1+\kappa)=3-4\nu-3r$ which
amounts to $r=\frac{3-4\nu}{4+\kappa}.$ But we need (\ref{L6}) to hold true so
we ask (\ref{L7}) to hold true.
\end{proof}

\bibliographystyle{plain}
\bibliography{biblioPDMPApp}

\end{document}